\newcommandx{\pregunta}[2][1=]{\todo[linecolor=red,backgroundcolor=red!25,bordercolor=red,#1]{#2}}
\newcommandx{\mejorar}[2][1=]{\todo[linecolor=blue,backgroundcolor=blue!25,bordercolor=blue,#1]{#2}}
\newtheorem{theorem}{Theorem}[section]
\newtheorem{lemma}[theorem]{Lemma}
\newtheorem{corollary}[theorem]{Corollary}
\newtheorem{proposition}[theorem]{Proposition}
\theoremstyle{definition}
\newtheorem{definition}[theorem]{Definition}
\newtheorem{example}[theorem]{Example}
\theoremstyle{remark}
\newtheorem{remark}[theorem]{Remark}
\newcommand{\cat}[1]{\mathcal{#1}}
\newcommand{\SV}{{\operatorname{SVec}}}
\newcommand{\z}{\mathbb{Z}}
\newcommand{\zdos}{\z/2\z}
\newcommand{\zn}[1]{\z/\displaystyle{#1}\z}
\newcommand{\FPdim}{\operatorname{FPdim}}
\newcommand{\cb}{\cat{B}}
\newcommand{\Irr}{\operatorname{Irr}}
\newcommand\Vc{\operatorname{Vec}}
\newcommand{\ot}{\otimes}
\newcommand{\id}{{\operatorname{id}}}
\newcommand{\Id}{{\operatorname{Id}}}
\newcommand{\Rep}{{\operatorname{Rep}}}
\newcommand{\Aut}[1]{{\operatorname{Aut}}(#1)}
\newcommand{\Autr}[1]{\operatorname{Aut}_{\otimes}(#1)}
\newcommand{\Autb}[1]{\operatorname{Aut}_{\otimes}^{br}(#1)}
\def\Pic#1{\text{Pic}(#1)}
\newcommand{\Ker}[1]{{\operatorname{Ker}}(#1)}
\newcommand{\mext}{\cat{M}_{ext}}
\newcommand{\picardtres}[1]{\underline{\underline{\operatorname{Pic}(#1)}}}
\newcommand{\picardos}[1]{\underline{\operatorname{Pic}(#1)}}
\def\uu#1{\underline{\underline{#1}}}
\newcounter{commentcounter}
\def\cB{\mathcal{B}}
\def\cE{\mathcal{E}}
\def\cN{\mathcal{N}}
\def\cM{\mathcal{M}}
\def\cC{\mathcal{C}}
\def\cS{\mathcal{S}}
\def\cD{\mathcal{D}}
\title[Minimal modular extensions for super-tannakian categories]{Minimal modular extensions for super-tannakian categories}
\author[C. Venegas-Ram\'irez]{C\'esar F. Venegas-Ram\'irez}
\email{cf.venegas10@uniandes.edu.co}
\address{Departamento de Matem\'aticas, Universidad de los Andes, Bogot\'a, Colombia. }
\begin{document}
\maketitle
%%%%%%%%%%%%%%%%%%%%%%%%%%%%%%%%%%%%%%%%%%%%%%%% en amsart hay conflito para al hacer la lista de notas que se resuleve con los siguientes comandos %%%%%%%%%%%%%%%%%%%%%%%%%%%%%%%%%%%%%%%%%%%%%%%%%%%%%%%%%%%%%%%%%%%%%%%%%%%%%%%%%%%%%%%%%%%%%%%%%%%%%%%%%%
\makeatletter
\providecommand\@dotsep{5}
\makeatother
%\listoftodos\relax

%%%%%%%%%%%%%%%%%%%%%%%%%%%%%%%%%%%%%%%%%%%%%%%%%%%%%%%%%%%%%%%%%%%%%%%%%%%%%%%%%%%%%%%%%%%%%%%%%%%%%%%%%%%%%%%%%%%%%%%%%%%%%%%%%%%%%%%%%%%%%%%%%%%%%%%%%%%%%%%%%%%%%%%%%%%%%%%%%%%%%%%%%%%%%%%%%%%%%%%%%%%%%%%%%%%%%%%%%%%%%%%%%%%%%%%%%%%%%%%%%%%%%%%%%%%%%%%%%%%%%%%%%%%%%%%%%%%%%%%%%%%%%%%%%%%%%%%%%%%%%%%%%%%%%%%

\begin{abstract}
In this paper, we continue with the ideas presented in  \cite{galindo2017categorical}. In this opportunity, we apply the fermionic action concept to classify in cohomology terms the minimal modular extensions of a super-Tannakian category. For this goal, we study some properties of equivariantization and de-equivariantization processes and cohomology data for the fermionic case.
\end{abstract}

\section{Introduction}
Given a braided fusion category $\cB$, the construction of modular categories from $\cB$ is studied in \cite{Galois-Mueger} and \cite{Brugui?res2000} modifying the structure of $\cB$. However, it is wished to find modular categories from $\cB$ without modifying it. In other words, it is wanted to find a modular category $\cM$ with a copy of $\cB$.  In particular, modular categories with a copy of $\cB$ satisfying some minimality condition are called minimal modular extensions in \cite{Mu2}. Find the minimal modular extensions of a braided fusion category is an open problem proposed by Muger in \cite{Mu2}.

The problem of finding the minimal modular extensions for symmetric fusion category was addressed in \cite{LKW} . For a symmetric fusion category, its set of minimal modular extensions has a structure of abelian group. Moreover, for the category of representations of a finite group, called Tannakian category, was presented a complete description of all minimal modular extensions. A description of a category of representations of a super-group, called super-Tannakian categories, is left as an open problem. 

In \cite{galindo2017categorical}, it is studied the connection between the concept of fermionic action of a super-group and minimal modular extensions for a super-Tannakian category. Here, it is proposed the obstruction to the existence of minimal modular extensions for a braided fusion category. Moreover, it is proved that the homomorphism 
\[D: \mext(\Rep(\widetilde{G},z)) \to \mext(\SV) \]
defined in Equation \eqref{equation: definition D} is surjective if and only if $\widetilde{G}=G \times \zdos$.

In this paper, it is studied braided crossed fusion categories whose action is a fermionic action as well as 2-homomorphism induced of these, according to {\cite[Theorem 7.12]{ENO3}}. As a result, it is obtained a fermionic version of {\cite[Theorem 7.12]{ENO3}} in Theorem \ref{teo: fermionic crossed to 3- homomorphisms}.

It is established a correspondence between minimal modular extensions of a super-Tannakian category and braided crossed extensions with fermionic actions. This correspondence allows us to mean the minimal modular extensions as 2-homomorphisms of 2-groups; therefore, it is possible to present a description in cohomology terms. This idea is developed in Proposition \ref{prop:image of D}, Corollary \ref{cor: preimage C}, Theorem \ref{proposition: preimagen of D}, and Corollary \ref{coro:ker(D)}.

Finally, some examples are presented in Theorem \ref{theorem: zm m impar} and Example \ref{example: z4} where the results obtained here are applied .

I thank Cesar Galindo at Universidad de los Andes for time, advice, and guidance in the development of this project\footnote{This project is supported by Faculty of Science of Universidad de los Andes, Convocatoria 2018-2019 para la financiaci\'on  de proyectos de investigaci\'on y presentaci\'on de resultados en eventos ac\'ademicos categor\'ia: estudiantes de doctorado candidatos.}.

\section{Mathematical background}
\subsection{Fusion categories.} By a \emph{fusion category}, we mean a rigid monoidal category, $\mathbb{C}$-linear, semisimple,  with finite-dimensional Hom-spaces, and a finite number of isomorphism classes of simple objects that include the unit object. We denote by $\Irr(\cC)$, the set of isomorphism classes of simple objects in a fusion category $\cC$.

If  $K_0(\cC)$ denotes the Grothendieck ring, there exists a unique ring homomorphism $\FPdim : K_0(\cC)\to \mathbb{R}$ such that $\FPdim(X) > 0$ for any $X \in \Irr(\cC)$, see \cite[Proposition 3.3.6]{Book-ENO}. The  Frobenius-Perron dimension of a fusion category $\cC$ is defined as 
\[\FPdim(\cC)= \sum_{X\in \Irr(\cC)} \FPdim(X)^2.\]

\begin{example}
	Consider a finite  group $G$ and $\omega\in Z^3(G, \mathbb{C}^\times)$ a 3-cocycle with coefficients in $\mathbb{C}^\times$. $\operatorname{Vec}_G^\omega$ is the fusion category of  finite dimensional $G$-graded vector spaces, the tensor product $\otimes$ is the tensor product of $G$-graded vectors spaces, the associativity constraint is given by $a_{\delta_g,\delta_h,\delta_k}=\omega(g,h,k)id_{\delta_{ghk}}$, and the unit constraints are given by $l_{\delta_g}=\omega(e,e,g)^{-1}id_{\delta_g}$ and $r_{\delta_g}=\omega(g,e,e)id_{\delta_g}$.
	\end{example}
Given a fusion category $\cC$, the set of isomorphism classes of invertible objects is denoted by $\operatorname{Inv}(\cC)$, see \cite{Book-ENO}.

\subsection{Braided fusion categories}\label{section: braided fusion categories}
A fusion category $\cB$ is called a \emph{braided} fusion category if it is endowed with a family of natural isomorphisms 
\begin{align*}
c_{X,Y} & : X \otimes Y\to  Y \otimes X, & X,Y &\in\cC,
\end{align*}
satisfying the hexagon axioms, see \cite{js}.

If $\cB$ is a braided fusion category with braiding $c$, the reverse braided fusion category is defined  as follows: $\cB^{rev}$ is equal to $\cB$ as fusion category, but the braiding is given by $c^{rev}_{X,Y}:=c_{Y,X}^{-1}$ for $X,Y \in \cB$.

Next, we present a way to construct braided fusion categories from group cohomology data. For this, we give the abelian cohomology concept.

 Let $A$ be a finite abelian group, an \emph{abelian 3-cocycle} is pair $(\omega,c)$ such that $\omega \in Z^3(A ,\mathbb{C}^\times)$ and $c:A \times A \to \mathbb{C}^{\times}$ satisfying the following equations:
	\begin{align}\label{eq:abelian-cocycle1}
		&
		\begin{aligned}
			\frac{c(g,hk)}{c(g,h)c(g,k)}&=\frac{\omega(g,h,k)\omega(h,k,g)}{\omega(h,g,k)}\\
			\frac{c(gh,k)}{c(g,k)c(h,k)}&=\frac{\omega(g,k,h)}{\omega(g,h,k)\omega(k,g,h)}
		\end{aligned},
		& 
		\text{for all } & g,h,k \in A .
	\end{align}
	 
	We denote by $Z^3_{ab}(A , \mathbb{C}^\times)$ the abelian group of all abelian 3-cocycles $(\omega,c)$, see \cite{EM1,EM2},.
	
		An abelian 3-cocycle $(\omega,c)\in Z_{ab}^3(A ,\mathbb{C}^\times)$ is called an \emph{abelian 3-coboundary} if there is $\alpha:A ^{\times 2}\to \mathbb{C}^\times$, such that
	\begin{align*}%\label{eq:abelian-coboundary}
		&
		\begin{aligned}
			\omega(g,h,k)&=\frac{\alpha(g,h)\alpha(gh,k)}{\alpha(g,hk)\alpha(h,k)},\\
			c(g,h)&=\frac{\alpha(g,h)}{\alpha(h,g)},
		\end{aligned}
		& 
		\text{for all } & g,h,k \in A .
	\end{align*}
	 $B^3_{ab}(A ,\mathbb{C}^\times)$ denotes the subgroup of $Z_{ab}^3(A ,\mathbb{C}^\times)$ of abelian 3-coboundaries. 
	 The quotient group $H^3_{ab}(A ,\mathbb{C}^\times):=Z_{ab}^3(A ,\mathbb{C}^\times)/B^3_{ab}(A ,\mathbb{C}^\times)$ is called the \emph{third group of abelian cohomology} of $A $.

\begin{definition}
	Given $(\omega,c) \in Z^3_{ab}(A,\mathbb{C}^\times )$, we define the braided fusion category $\operatorname{Vec}_A^{(\omega,c)}$ as follows. 
	\begin{eqnarray*}
		\operatorname{Vec}_A^{(\omega,c)}= \operatorname{Vec}_A^{\omega} \text{ as fusion category.}
	\end{eqnarray*}
	The braiding of $\operatorname{Vec}_A^{(\omega,c)}$ is defined by the map $c$, and it will be denoted by the same letter.
	\begin{eqnarray*}
		c(\delta_g,\delta_h)=c(g,h)\id_{\delta_{gh}}, \text{ for each } g,h \in A.
	\end{eqnarray*} 
	The hexagon axioms are equivalent to \eqref{eq:abelian-cocycle1}.
\end{definition}

	If $(\omega,c)\in Z^3_{ab}(A,\mathbb{C}^\times)$, the map 
	\begin{align*}
		q&:A\to \mathbb{C}^\times,& q(l) &:= c(l,l), & l&\in A,
	\end{align*}
	is a quadratic form on $A$; that is, $q(l^{-1})=q(l)$ for all $l\in A$ and the symmetric map 
	\begin{align*}
		b_q(k,l):=q(kl)q(k)^{-1}q(l)^{-1},&& k,l\in A,
	\end{align*}is a bicharacter.

\begin{definition}
A pointed (braided) fusion $\cC$ is a (braided) fusion category where any simple object is invertible. The set of isormorphism classes of simple objects $A :=\operatorname{Inv}(\cC)$ is a (abelian) group with product induced by the tensor product. 
\end{definition}

It is known that a (braided) fusion category $\cC$ is (braided) equivalent to $\operatorname{Vec}_{A}^{(\omega,c)}$ for some finite (abelian) group $A$ and  (abelian) 3-cocycle.

%If $\cD$ is a full subcategory of $\cB$, the centralizer of $\cD$ respect to $\cB$ is the full subcategory $$C_{\cB}(\cD):=\{ Y \in \cb : c_{Y,X} \circ c_{X,Y}=\id_{X \otimes Y} \text{ for all } X \in \cD \}.$$

The Muger center of $\cB$ is the fusion subcategory 
\[\mathcal{Z}_2(\cB):=\{Y \in \cB: c_{Y,X} \circ c_{X,Y}=\id_{X \otimes Y }, \ \text{for all }  X \in \cB\}.\]

A braided fusion category $\cb$  is called \emph{symmetric} if  $\mathcal{Z}_2(\cb)=\cb$, i.e., if  $c_{Y,X} \circ c_{X,Y}=\id_{X \otimes Y}$ for each pair of objects $X,Y$ in $\cb$.

It is well known that symmetry fusion categories are equivalent to one of the following two examples: 

\begin{enumerate}[leftmargin=*,label=\rm{(\alph*)}]
	\item {\it Tannakian categories}. The category $\operatorname{Rep}(G)$ of finite dimensional complex representation of a finite group $G$,  with standard braiding $c_{X,Y}(x \otimes y):= y \otimes x$ for $x \in X$ and $y \in Y$.
	\item {\it Super-Tannakian categories}. \label{supergrupo}A \emph{finite super-group} is a pair $(\widetilde{G},z)$ where $\widetilde{G}$ is a finite group and $z$ is a central element of order two. An irreducible representation of $\widetilde{G}$ has one degree if $z$ acts as $-\id$, and has zero degree if $z$ acts as $\id$. 
	We denote the degree of a simple object $X \in \Rep(\widetilde{G})$  by $|X| \in \{0,1\}$.
	
	We define the  braiding $c'$ of two simple object $X$, $Y$ by
$$c'_{X,Y}(x\otimes y)= (-1)^{|X||Y|}y\otimes x.$$ 
The category $\operatorname{Rep}(\widetilde{G})$ with the braiding $c'$ is called a \emph{super-Tannakian} category, and it will be denoted by $\Rep(\widetilde{G},z)$.
\end{enumerate}

The super-Tannakian category $\operatorname{Rep}(\zdos,[1])$ is called the category of \emph{super-vector spaces} and it  will be denoted by $\SV$.

Deligne establishes that every symmetry fusion category is braided equivalent  to $\operatorname{Rep}(G)$ or  $\operatorname{Rep}(\widetilde{G},z)$ for a unique finite group $G$ or super-group $(\widetilde{G},z)$, see \cite{deligne2002categories}.

A braided fusion category $(\cB,c)$ is called \emph{non-degenerate} if $\mathcal{Z}_2(\cB) \cong \Vc$. A \emph{modular category} means a non-degenerate \emph{spherical} braided fusion categories. Non-degenerancy in this case is equivalente to the invertibility of the $S$-matrix, see \cite{DGNO}.
\begin{example}[Pointed braided fusion categories of dimension four]\label{example: categorias puntedadas dimension 4}
 Non-degenerate braided pointed fusion categories of dimension four were classified in \cite[Appendix A.3]{DGNO}, in terms of the associated metric group. Next, we present the braided structure of such categories as categories of finite dimensional $A$-graded vectors spaces $\operatorname{Vec}_A^{(\omega,c)}$.  We present abelian 3-cocycles associated to the metric groups. For this, we will identify the group of all roots of  unity in $\mathbb{C}$ with $\mathbb{Q}/\mathbb{Z}$. The description that we show below will be of great importance later.	
	\begin{enumerate}[leftmargin=*,label=\rm{(\alph*)}]
	
		\item  If $A$ has a presentation given by $A:=\{ 0,v,f, v+f: 2f=0, 2v=0 \}$  and $k\in \mathbb{Q}/\mathbb{Z}$ such that $4k=0$.
		
			We define $(\omega_k,c_k)\in Z^3_{ab}(A,\mathbb{Q}/\mathbb{Z})$ as follows: if $x=x_vv+x_ff$, $y=y_vv+y_ff$, and $z=z_vv+z_ff$ then  
	
	\begin{eqnarray*}
		\omega_k(x,y,z)&=& \begin{cases}
			0 & if \quad y_v+z_v<2\\
			2kx_v & if \quad y_v+z_v \geq 2,
		\end{cases} %\label{asoklein}
	\end{eqnarray*}
	\begin{eqnarray*}
		c_k(x,y)&=&\frac{1}{2}(x_v+x_f)y_f	+ kx_vy_v %\label{trenzaklein}.
	\end{eqnarray*}
	
	The case $k=0$ corresponds to the Drinfeld center of $\Vc_{\mathbb{Z}/2\mathbb{Z}}$ also called the Toric Code MTC. The case $k=\frac{1}{2}$ corresponds to  $(D_4, 1)$, also called three fermions MTC. The case $k=\pm \frac{1}{4}$ corresponds to two copies of Semion MTC.

\item If $A$ has a presentation given by $A:=\{ 0,v,f, v+f: 2f=0, 2v=f \}$ and $k\in \mathbb{Q}/\mathbb{Z}$ such that $4k=\frac{1}{2}$.

  We define $(\omega_k,c_k)\in Z^3_{ab}(A,\mathbb{Q}/\mathbb{Z})$ as follows: if $x=x_vv$, $y=y_vv$, and $z=z_vv$ then
	\begin{eqnarray*}
		\omega_k(x,y,z)= \begin{cases}
			0 & if \quad y_v+z_v <4 \\
			\frac{1}{2} & if \quad y_v+z_v\geq 4
		\end{cases},%\label{assz4}
	\end{eqnarray*}
	\begin{eqnarray*}
		c_k(x,y)%&=& % xyq_k(v) \nonumber \\
		&=& kx_vy_v.%\label{trenzaz4} %\text{(\cite{js})}  
	\end{eqnarray*}

	\end{enumerate}

		In all cases above, the quadratic form $q: A\to \mathbb{Q}/\mathbb{Z}$ is given by
	\begin{align*}
	q(f)=\frac{1}{2},&& q(v)=q(v+f)=k,&& q(0)=0,
	\end{align*}and the number $k=q(v)\in \{\frac{s}{8}: 0\leq s< 8 \}$ is a complete invariant.  See \cite{RSW} for more details about the classification of modular categories of dimension four.
\end{example}

\subsection{Drinfeld center of a fusion category}

An important class of non-degenerate fusion categories arises using the Drinfeld center $\mathcal{Z}(\cC)$ of a 
fusion category $(\cC,a,\mathbf{1})$, see \cite[Corolary 3.9]{DGNO}. The center construction produces a non-degenerate braided fusion category
$\mathcal{Z}(\cC)$ from any fusion category $\cC$.
Objects of $\mathcal{Z}(\cC)$ are pairs $(Z, \sigma_{-,Z})$, where $Z
\in \cC$ and $\sigma_{-,Z} : - \otimes Z \to Z \otimes -$ is a natural
isomorphism such that the diagram

{\small
 \begin{equation*}%\label{equ: centro}
\xymatrix{&  Z\ot (X\ot Y)\ar[r]^{a_{Z,X,Y}}& (Z\ot X)\ot Y &\\ 
(X\ot Y)\ot Z \ar[ru]^{\sigma_{X\ot Y,Z}}\ar[rd]_{a_{X,Y,Z}} &&& (X\ot Z)\ot Y \ar[lu]_{\sigma_{X,Z}\ot \operatorname{id}_Y}\\
& X\ot( Y\ot Z) \ar[r]_{\operatorname{id}_X\ot \sigma_{Y,Z}}&  X\ot (Z\ot Y) \ar[ru]_{a^{-1}_{X,Z,Y}}&
}
\end{equation*}
}
commutes for all $X,Y,Z \in \cC$.
The braided tensor structure is the
following:
\begin{itemize}
\item the tensor product is $ (Y, \sigma_{-,Y}) \otimes (Z, \sigma_{-,Z}) =
(Y\otimes Z, \sigma_{-,Y\otimes Z})$ where
\begin{align*}
\sigma_{X,Y \otimes Z}&:= a_{Y,Z,X}(\id_Y \otimes \sigma_{X,Z})a_{Y,X,Z}^{-1}(\sigma_{X,Y} \otimes \id_Z)a_{X,Y,Z} \textup{ for }X \in \cC.
\end{align*}
\item the braiding is the isomorphism $\sigma_{X,Y}$.
\end{itemize}

We have that 
\[\FPdim(\mathcal{Z}(\cC))=\FPdim(\cC)^2 \]
for any fusion category $\cC$ , see \cite[Proposition 9.3.4]{Book-ENO}. Morevoer, for a braided fusion category $\cb$, there is braided embedding functor

\begin{align*}
    \cB \to \mathcal{Z}(\cB),&& X\mapsto (X,c_{-X}). 
\end{align*}

\subsection{Group actions on  fusion categories}\label{categorical actions}

Let $\cC$ be a  fusion category,  $\underline{\operatorname{Aut}_\otimes(\cC)}$ denotes the monoidal category where
objects are tensor autoequivalences of $\cC$, arrows are monoidal natural isomorphisms, the tensor product is the composition of functors, and unit object $\Id_\cC$. Similarly, we define the monoidal category  $\underline{\Autb{\cB}}$ of braided autoequivalences of a braided fusion category $\cB$.

An \emph{action} of a finite group $G$ on a fusion category $\cC$  is a monoidal functor  $*:\underline{G}\to
\underline{\operatorname{Aut}_\otimes(\cC)}$  where $\underline{G}$ denotes the discrete monoidal category with objects indexed by elements of $G$ and tensor product given by the multiplication of $G$.

An action $*: \underline{G}\to \underline{\operatorname{Aut}_\otimes(\cC)}$ of $G$ over $\cC$ has the following data: 
\begin{itemize}
	\item  tensor functors $g_*: \cC\to \cC$, for each $g \in G$,
	\item  natural tensor isomorphisms $\phi(g,h): (gh)_*\to g_*\circ h_*$, for all $g,h \in G$, and
	\item  a monoidal natural isomorphism $\nu: e_* \to Id_\cC$,
\end{itemize}
which satisfy some conditions of coherence, see \cite[Section 2]{Tam-act}.

An action of a finite group $G$ on a braided fusion category $\cB$ is defined similarly. In this case the monoidal functor $*:\underline{G}\to \underline{\Autb{\cB}}$ is defined over $\underline{\Autb{\cB}}$ and the data satisfy the same coherence conditions. 

An action on a (braided) fusion category as the one described here is also called  a \emph{bosonic action}.

\begin{example}[{\cite[Section 7]{Tam-act}}]\label{Example:action-pointed fusion categories}
	
	Let $G$ and $A$  be finite groups. Given $\omega \in Z^3(A,\mathbb{C}^{\times})$, an action of $G$ on $\Vc_A^{\omega}$ is determined by a homomorphism $*:G\to \Aut{A}$ and \emph{normalized} maps
	\begin{align*}
	\mu: G\times A\times A&\to \mathbb{C}^{\times}\\
	\gamma:G\times G\times A&\to \mathbb{C}^{\times}
	\end{align*}
	such that
	\begin{align*}
	\frac{\omega(a,b,c)}{\omega(g_*(a) ,g_*(b),g_*( c))}&= \frac{\mu(g;b,c)\mu(g;a,bc)}{\mu(g;ab,c)\mu(g;a,b)},\\
	%\frac{c(a,b)}{c(\sigma_*(a),\sigma_*(b))}&= \frac{\gamma(\sigma,a,b)}{\gamma(\sigma,b,a)}\\
	\frac{\mu(g;h_*(a) ,h_*(b))\mu(h;a,b)}{\mu(gh;a,b)}&=\frac{\gamma(g,h;ab)}{\gamma(g,h ;a)\gamma(g,h;b)},\\
	\gamma(gh,k;a)\gamma(g,h;k_*(a) )&=\gamma(h,k;a)\gamma(g,hk;a),
	\end{align*}for all $a,b,c\in A$, and  $g,h,k \in G$.
	
		The action is defined as follows: for each $g\in G$, the associated monoidal functor $g_*$ is given by $g_*(\delta_a):=\delta_{g_*(a)}$, constraint $\psi(g)_{a,b}= \mu(g;a,b)\id_{\delta_{g_*(ab)}}$ and the tensor natural isomorphism is
	$$\phi(g,h)_{\delta_a}=\gamma(g,h;a)\id_{\delta_{(gh)_*(a)}},$$
	for each pair
	$g,h \in G, a\in A$.
\end{example}

We present a bosonic action of the cyclic group of order 2 on the categories $\operatorname{Vec}_A^{(\omega,c)}$ presented in 
Example \ref{example: categorias puntedadas dimension 4} where $A$ is an abelian group of order 4, and $(\omega,c)\in Z^3_{ab}(A,\mathbb{C}^\times)$.

Given that  $H^n(G,\mathbb{C}^\times)\cong H^n(G,\mathbb{Q}/\z)$, we use normalized maps $\mu$ and $\gamma$ with coefficients in $\mathbb{Q}/\z$.

\begin{example}\label{example: action pointed braided categories of foru rank}
Let $A$ be an abelian group of order 4, let $\operatorname{Vec}_A^{(\omega_k,c_k)}$ be the categories presented in Example \ref{example: categorias puntedadas dimension 4}, and let $C_2=\langle u \rangle$ be the cyclic group of order 2 generated by $u$. Then, the following data defines an action of $C_2$ over $\operatorname{Vec}_A^{(\omega_k,c_k)}$.

\begin{enumerate}[leftmargin=*,label=\rm{(\alph*)}]
    \item If $A$ has a presentation given by $A:=\{ 0,v,f, v+f: 2f=0, 2v=0 \}$, then $C_2$ has an action on $\operatorname{Vec}_A^{(\omega,c)}$ defined by 
    	\begin{align*}
	u_*(f)=f, && u_*(v)=v+f,
	\end{align*}
	where the normalized maps $\mu$ and $\gamma$ are defined by the tables: 
	\begin{align*}%\label{mu y gamma caso 1}
	\begin{tabular}{|c||c|c|c|}
	\hline
	$\mu(u;-,-)$ &  $v$ & $f$ & $f+v$ \\
	\hline 
	\hline
	$v$ & 1/2 & 1/2 & 0 \\
	\hline
	$f$ & 0& 0& 0 \\
	\hline
	$f+v$ & 1/2 & 1/2 & 0 \\
	\hline
	\end{tabular} && \begin{tabular}{|c||c|c|c|}
	\hline
	$\gamma(u,u;-)$ &  $v$ & $f$ & $f+v$ \\
	\hline 
	\hline & $\frac{1}{4}$ & 0 & $\frac{1}{4}$\\
	\hline
	\end{tabular}
	\end{align*}
	
	\item If $A$ has a presentation given by $A=\{ 0,v,f, v+f: 2f=0, 2v=f \}$, then $C_2$ has an action on $\operatorname{Vec}_A^{(\omega,c)}$ defined by
	\begin{align*}
	u_*(f)=f, && u_*(v)=v+f,
	\end{align*}
	where the normalized maps $\mu$ and $\gamma$ are defined by the tables: 
	\begin{align*}%\label{mu y gamma caso 2}
	\begin{tabular}{|c||c|c|c|}
	\hline
	$\mu(u;-,-)$ &  $v$ & $f$ & $f+v$ \\
	\hline 
	\hline
	$v$ & 0 & 1/2 & 0 \\
	\hline
	$f$ & 0& 0& 0 \\
	\hline
	$f+v$ & 0 & 1/2 & 0 \\
	\hline
	\end{tabular} && \begin{tabular}{|c||c|c|c|}
	\hline
	$\gamma(u,u;-)$ &  $v$ & $f$ & $f+v$ \\
	\hline 
	\hline & 0 & 0 & $\frac{1}{4}$\\
	\hline
	\end{tabular}
	\end{align*}
\end{enumerate}
\end{example}

\begin{definition}\label{definition: lifting }
	Let $\rho: G\to \Autr{\cC}$ be a group homomorphism where $\cC$ is a fusion category and $G$ is a finite group. A \emph{lifting} of $\rho$ is a monoidal functor $\tilde{\rho}:  \underline{G} \to \underline{\Autr{\cC}}$ such that the isomorphism class of $\tilde{\rho}(g)$ is $\rho(g)$ for each $g\in G$.
\end{definition}

If  $\rho: G\to \operatorname{Aut}_\otimes(\cC)$ is a group homomorphism, the finite group $G$  acts on $\widehat{K_0(\cC)}$. Let us fix a representative tensor autoequivalence $F_g:\cC\to \cC$ for each $g\in G$ and a  tensor natural isomorphism $\theta_{g,h}: F_g\circ F_{h}\to F_{gh}$ for each pair $g,h \in G$. Define $O_3(\rho)(g,h,l)\in \widehat{K_0(\cC)}$ by the commutativity of the diagram

\begin{equation}\label{definition 3-cocycle}
\begin{gathered} 
\xymatrixcolsep{5pc} \xymatrix{
	F_g\circ F_h\circ F_l  \ar[dd]^{F_g(\theta_{h,l})} \ar[r]^{(\theta_{g,h})_{F_l}}& F_{gh}\circ F_l \ar[d]^{\theta_{gh,l}}\\
	&F_{ghl}\ar[d]^{O_3(\rho)(g,h,l)}\\
	F_g\circ F_{hl} \ar[r]^{\theta_{g,hl}} &F_{ghl}.
}
\end{gathered}
\end{equation}

\begin{proposition}[{\cite[Theorem 5.5]{Ga1}}]\label{Proposition:Obstruction-bosonic} \label{obstruction 3}
	Let $\cC$ be a fusion category and  $\rho: G\to \operatorname{Aut}_\otimes(\cC) $ a group homomorphism. The map $O_3(\rho): G^{\times 3}\to \widehat{K_0(\cC)}$ defined by the diagram \eqref{definition 3-cocycle} is  a 3-cocycle and its cohomology class  depends on $\rho$. The map $\rho$ lifts to an action $\widetilde{\rho}:\underline{G}\to \underline{\operatorname{Aut}_\otimes(\cC)}$ if and only if $0=[O_3(\rho)]\in H_{\rho}^3(G,\widehat{K_0(\cC)} )$. If $[O_3(\rho)]=0$ the set of equivalence classes of liftings of $\rho$ is a torsor over $H^2_\rho(G,\widehat{K_0(\cC)})$.
\end{proposition}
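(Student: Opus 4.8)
The plan is to recognize the statement as the obstruction theory for lifting a homomorphism into a categorical $2$-group, namely $\underline{\operatorname{Aut}_\otimes(\cC)}$, whose set of isomorphism classes of objects is $\operatorname{Aut}_\otimes(\cC)$ and whose unit object is $\Id_\cC$. The first step is to identify the automorphism group of the unit with $\widehat{K_0(\cC)}$: by semisimplicity and Schur, a monoidal natural automorphism of $\Id_\cC$ acts on each simple object by a scalar, and monoidality forces these scalars to be multiplicative under tensor product, i.e.\ to assemble into a character of $K_0(\cC)$. Since each $F_g$ is an equivalence, the same argument yields, for every tensor autoequivalence $F$, a canonical isomorphism $\operatorname{Aut}_\otimes(F)\cong\widehat{K_0(\cC)}$ obtained by transporting scalars along $F^{-1}$; I would fix the right-whiskering version of this identification once and for all and record that conjugation by $F_g$ implements exactly the $G$-action $g\cdot\chi=\chi\circ\rho(g)^{-1}$ on $\widehat{K_0(\cC)}$ defining the twisted cohomology $H^\ast_\rho$. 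With these identifications, the $2$-automorphism $O_3(\rho)(g,h,l)=\theta_{g,hl}\circ F_g(\theta_{h,l})\circ(\theta_{g,h})_{F_l}^{-1}\circ\theta_{gh,l}^{-1}$ of $F_{ghl}$ determined by the commuting square of diagram \eqref{definition 3-cocycle} is indeed an element of $\widehat{K_0(\cC)}$.

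Next I would prove that $O_3(\rho)$ is a $3$-cocycle. Fix representatives $F_g$ with $F_e=\Id_\cC$ and coherence isomorphisms $\theta_{g,h}$, which exist because $\rho$ is a homomorphism, so $F_g\circ F_h\cong F_{gh}$. Writing the two standard contractions of $F_g\circ F_h\circ F_k\circ F_l$ down to $F_{ghkl}$ and using that composition of functors is strictly associative, naturality of the $\theta$'s, and the interchange law for whiskering, the pentagon of reassociations collapses to $(g\cdot O_3(\rho)(h,k,l))\,O_3(\rho)(g,hk,l)\,O_3(\rho)(g,h,k)=O_3(\rho)(gh,k,l)\,O_3(\rho)(g,h,kl)$, which is precisely the cocycle condition in $Z^3_\rho(G,\widehat{K_0(\cC)})$. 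The twist $g\cdot(-)$ on the first factor is produced by whiskering the inner obstruction with $F_g$, which is why the action fixed in the first step must match.

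I would then show that the class is independent of the choices. After identifying the representatives $F_g$ of two coherence systems up to isomorphism, the residual freedom is a replacement $\theta_{g,h}\mapsto\beta(g,h)\,\theta_{g,h}$ with $\beta(g,h)\in\widehat{K_0(\cC)}$; substituting into the defining formula shows $O_3(\rho)$ changes exactly by the coboundary $d\beta$, so $[O_3(\rho)]\in H^3_\rho(G,\widehat{K_0(\cC)})$ depends only on $\rho$. This also settles the lifting criterion, since a lifting is exactly a choice of $F_g$ and $\theta_{g,h}$ for which the square commutes on the nose, i.e.\ $O_3(\rho)\equiv 1$: if a lifting exists then $[O_3(\rho)]=0$, and conversely if $[O_3(\rho)]=0$ we write $O_3(\rho)=d\beta$ and replace $\theta_{g,h}$ by $\beta(g,h)^{-1}\theta_{g,h}$ to obtain coherent data. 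Finally, when $[O_3(\rho)]=0$, two liftings with the same $F_g$ differ by some $\beta\in C^2$ with $d\beta=1$, i.e.\ $\beta\in Z^2_\rho(G,\widehat{K_0(\cC)})$, while a monoidal natural isomorphism between two liftings is a $1$-cochain $\gamma$ changing $\beta$ by $d\gamma$; hence the equivalence classes of liftings form a torsor over $H^2_\rho(G,\widehat{K_0(\cC)})$.

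The main obstacle I anticipate lies entirely in the bookkeeping of the $G$-action: one must check that the canonical identifications $\operatorname{Aut}_\otimes(F_{ghl})\cong\widehat{K_0(\cC)}$ are compatible with whiskering, so that the stray factor $F_g(\theta_{h,l})$ in the pentagon becomes exactly $g\cdot O_3(\rho)(h,k,l)$ for the module structure defining $H^3_\rho$, rather than an untwisted or oppositely twisted version. Getting the left/right whiskering conventions and the direction of the action to agree is the delicate point; once they are pinned down, the cocycle identity, the coboundary computation, and the torsor statement all follow formally from the coherence axioms for monoidal functors.
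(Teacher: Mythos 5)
The paper does not actually prove this proposition --- it is imported verbatim from \cite[Theorem 5.5]{Ga1} with the diagram \eqref{definition 3-cocycle} as the only supporting material --- so there is no in-paper argument to compare against; your proposal reconstructs the standard obstruction-theoretic proof from that reference (identifying monoidal natural automorphisms of $\Id_\cC$, and hence of each $F_{ghl}$, with $\widehat{K_0(\cC)}$; the pentagon/whiskering argument for the cocycle identity; the coboundary analysis giving well-definedness, the lifting criterion, and the $H^2_\rho$-torsor structure), and it is correct. Your closing caveat about pinning down the direction of the $G$-action so that $F_g(\theta_{h,l})$ contributes exactly $g\cdot O_3(\rho)(h,k,l)$ is indeed the only genuinely delicate point, and you have identified it accurately (note also that the statement's ``depends on $\rho$'' should read ``depends only on $\rho$,'' which is what you prove).
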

%\qed
Proposition \ref{Proposition:Obstruction-bosonic} says us that there exists an action of $H^2_\rho(G,\widehat{K_0(\cC)})$ on the liftings of $\rho$; we denoted this action by $\triangleright$. Moreover, if $\widetilde{\rho}$ is a lifting of $\rho$, any other lifting can be obtained in the form $\mu \triangleright \widetilde{\rho}$ for $\mu \in H^2_\rho(G,\widehat{K_0(\cC)})$.

\subsubsection{Equivariantization.}
processes of equi\-variantization and de-equi\-va\-rian\-ti\-za\-tion are some of the main tools that we will use throughout this manuscript. we present a description of these processes as well as the most relevant results in this regard. Most of the results presented here appear in \cite{DGNO}.

Given an action $*:\underline{G}\to \underline{\operatorname{Aut}_\otimes(\cC)}$ of a finite group $G$ on a fusion category $\cC$ with monoidal structure given by $\phi$. The \emph{$G$-equivariantization}  of $\cC$ is the fusion category denoted by $\cC^G$ and defined as follows. An object in $\cC^G$ is a pair $(V, \tau)$, called $G$-object, where $V$ is an object of $\cC$ and $\tau$ is a family of isomorphisms $\tau_g: g_*(V) \to V$, $g \in G$, such that
\begin{equation*}%\label{deltau}
\tau_{gh}=
\tau_g g_*(\tau_h)\phi(g,h),
\end{equation*}
for all $g,h \in G$. A morphisms $\sigma: (V, \tau) \to (W, \tau')$ between $G$-objects is a morphism $\sigma: V \to W$ in $\cC$ such that
$\tau'_g\circ g_*(\sigma) = \sigma \circ \tau_g$, for all $g \in
G$. The tensor product is defined by
\begin{align*}
(V, \tau)\otimes (W, \tau'):= (V\otimes W, \tau'')
\end{align*}where $$\tau''_g= \tau_g \otimes  \tau'_g\psi(g)_{V,W}^{-1},$$and the unit
object is $(\bf{1}, \operatorname{id}_{\mathbf{1}})$. The Frobenius-Perron dimension of $\cC^G$ is $|G|\operatorname{FPdim}(\cC)$, see \cite[Proposition 4.26]{DGNO}.

%There is a canonical fully faithful monoidal functor 
%\begin{equation}\label{equation:inclusion equivariantization}
%\Rep(G)\cong\operatorname{Vec}^G \to \cC^G. 
%\end{equation} 
%This functor canonically decomposes as $\Rep(G) \to  Z(\cC^G) \to  \cC^G$. Thus $\cC^G$ is a fusion category over $\Rep(G)$, see \cite[Section 4.2.2]{DGNO}.

%\begin{proposition}\label{proposition:functor central}
%There is a canonical braided tensor functor 
%\begin{equation}
%F: \Rep(G) \to \mathcal{Z}(\cC^G)
%\end{equation}
%such that
%\begin{enumerate}
%\item The functor in (\ref{equation:inclusion equivariantization}) is isomorphic to composition of $F$ and the forgetful functor.

%\item The compositon of the functor in (\ref{equation:inclusion equivariantization}) and the forgetful functor $\mathcal{Z}(\cC^G) \to \cC$ maps $Rep(G)$ to $\operatorname{Vec}$ and has a braided structure. 
%\end{enumerate}
%\end{proposition}

\begin{theorem}[{\cite[Proposition 2.10]{ENO2}}]
	Let $\cD$ be a fusion category and let $G$ be a finite group. If there exists a braided tensor functor $\Rep(G)\to \mathcal{Z}(\cD)$ such that its composition with the forgetful functor is fully faithful, then there is a fusion category $\cC$ and an action of $G$ on $\cC$ such that $\cD \cong \cC^G$.
\end{theorem}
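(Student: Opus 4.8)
The plan is to realize $\cD$ as the $G$-equivariantization of a \emph{de-equivariantized} category built from the given copy of $\Rep(G)$ inside the center. Write $F\colon\mathcal{Z}(\cD)\to\cD$ for the forgetful functor and $\iota\colon\Rep(G)\to\mathcal{Z}(\cD)$ for the hypothesized braided functor, so that $F\circ\iota$ is fully faithful.

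\textbf{Step 1 (the regular algebra).} The regular representation carries the structure of a commutative, connected, separable (\'etale) algebra $\Fun(G)$ in $\Rep(G)$, with $\FPdim(\Fun(G))=|G|$. Transporting it along $\iota$ produces a commutative algebra $A:=\iota(\Fun(G))$ in $\mathcal{Z}(\cD)$. I would first record that the hypothesis that $F\circ\iota$ is fully faithful forces $\Hom{\cD}{\tensorunit}{F(A)}\cong\mathbb{C}$, so that $A$ is connected, and that $F(A)$ remains a separable algebra in $\cD$ with $\FPdim(F(A))=|G|$.

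\textbf{Step 2 (de-equivariantization).} Define $\cC:=\operatorname{Mod}_{\cD}(F(A))$, the category of right $F(A)$-modules in $\cD$. Because $A$ lives in $\mathcal{Z}(\cD)$ it comes equipped with a half-braiding $\sigma_{-,A}$, and this central structure is exactly what lets me endow $A$-modules with a monoidal product $\otimes_A$: for modules $M,N$ one forms the relative tensor product as the coequalizer of the two maps $M\ot F(A)\ot N\rightrightarrows M\ot N$, where one map is the action on $M$ and the other uses $\sigma_{N,A}$ to let $A$ act on $N$. I would then check that connectedness and separability of $A$ make $\cC$ a fusion category, with the free-module functor $\cD\to\cC$, $X\mapsto X\ot F(A)$, a surjective tensor functor and $\FPdim(\cC)=\FPdim(\cD)/|G|$.

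\textbf{Step 3 (the $G$-action).} For each $g\in G$, left translation on $\Fun(G)$ is an algebra automorphism lying in $\Rep(G)$, hence gives an automorphism $L_g$ of $A$ in $\mathcal{Z}(\cD)$; since translations satisfy $L_gL_h=L_{gh}$ strictly, these are compatible. Restriction of scalars along $L_g$ (twisting the $A$-action of a module by $L_g$) defines a tensor autoequivalence $g_*$ of $\cC$, and these assemble into an action $\underline{G}\to\underline{\Autr{\cC}}$ in the sense of Subsection \ref{categorical actions}, whose coherence data come from the (essentially strict) translation action of $G$ on $\Fun(G)$.

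\textbf{Step 4 (equivariantization recovers $\cD$).} The free module $X\mapsto X\ot F(A)$ carries a canonical $G$-equivariant structure induced by the translation action on $A$, so it refines to a tensor functor $\Phi\colon\cD\to\cC^{G}$. Its inverse sends a $G$-equivariant $A$-module to its ``fiber at the identity'' (the summand cut out by the idempotent $\delta_e\in\Fun(G)$), exactly as $G$-equivariant sheaves on $G$ with the translation action are equivalent to plain vector spaces via the fiber at $e$. I would conclude by showing $\Phi$ is fully faithful and then invoking $\FPdim(\cC^{G})=|G|\,\FPdim(\cC)=\FPdim(\cD)$ from Subsection \ref{categorical actions} to upgrade full faithfulness to an equivalence $\cD\cong\cC^{G}$.

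\textbf{Main obstacle.} The crux is Step 4: the assertion that equivariantization inverts de-equivariantization. This is precisely where the fully-faithfulness hypothesis is indispensable, as it is what guarantees that $A$ is connected and genuinely behaves like $\Fun(G)$; without it $A$ need not be connected, the fiber-at-$e$ construction breaks down, and $\Phi$ fails to be essentially surjective.
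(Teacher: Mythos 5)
The paper does not prove this statement---it is quoted directly from \cite{ENO2}---and your argument reconstructs precisely the standard proof from that source and from the de-equivariantization machinery the paper itself summarizes: pass to modules over the \'etale algebra $F(\iota(\Fun(G)))$ in $\cD$, let $G$ act by translations on $\Fun(G)$, and recover $\cD$ as the equivariantization via full faithfulness of the free-module functor plus the Frobenius--Perron dimension count. The approach is correct; the one nitpick is that connectedness forces $\Hom{\cD}{\tensorunit}{F(A)}\cong\mathbb{C}$, so the idempotents $\delta_g$ are not morphisms in $\cD$ and your ``fiber at $e$'' description of the inverse is only a heuristic from the baby case $\cD=\Rep(G)$, but this is harmless since your Step 4 correctly routes the actual argument through full faithfulness and the dimension count rather than through an explicit inverse.
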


\subsubsection{De-equivariantization} 

In this part, we describe the opposite construction  to equivariantization called de-equivariantization.

\begin{definition}[{\cite{DGNO}}]
A \emph{central functor} from a braided fusion category $\cB$ to a fusion category $\cC$ is a braided functor $\cB \to \mathcal{Z}(\cC)$.
\end{definition}
If $\cE$ is a symmetric fusion category, $\cC$ is called a \emph{fusion category over} $\cE$ if it is endowed with a \emph{braided inclusion} $\cE \to \mathcal{Z}(\cC)$ such that its composition with the forgetful fuctor is an inclusion in $\cC$. If $\cB$ is braided, $\cB$ is a braided fusion category over $\cE$ if it is endowed with a braided inclusion $\cE \to \mathcal{Z}_2(\cB)$, see \cite{DGNO,ENO2}.

Let $\cC$ be a  fusion category and $\Rep(G)\subset \mathcal{Z}(\cC)$ be a Tannakian subcategory  which embeds into $\cC$ via the forgetful functor $\mathcal{Z}(\cC)\to \cC$. The algebra $\mathcal{O}(G)$ of functions on $G$ is a commutative algebra
in $\mathcal{Z}(\cC)$.  The category  of left $\mathcal{O}(G)$-modules in $\cC$ is a fusion category called de-equivariantization of $\cC$ by $\Rep(G)$, and it is denoted by $\cC_G$, see \cite{DGNO} for more details. It follows from  \cite[Lemma 3.11]{DGNO} that 
\[\FPdim(\cC_G)=\frac{\FPdim (\cC)}{|G|}.\]

There is a canonical fully faithful monoidal functor 

\begin{equation*}\label{equation:inclusion equivariantization}
\Rep(G)\cong\operatorname{Vec}^G \to \cC^G. 
\end{equation*} 
This functor canonically decomposes as $\Rep(G) \to  Z(\cC^G) \to  \cC^G$. Thus $\cC^G$ is a fusion category over $\Rep(G)$, see \cite[Section 4.2.2]{DGNO}.

\begin{proposition}[{\cite[Theorem 4.18, Proposition 4.19, Proposition 4.22]{DGNO}}]\label{proposition: equivalencia equivariantizacion}\label{proposition de-equivariantization} Equivariantization defines an equivalence between the 2-category of (braided) fusion categories with an action of $G$ and the 2-category of fusion categories over $\Rep(G)$. The  de-equiva\-riantization functor is inverse to the  equivariantization functor.

\end{proposition}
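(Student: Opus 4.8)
The plan is to exhibit equivariantization $E\colon \cC \mapsto \cC^G$ and de-equivariantization $D\colon \cD \mapsto \cD_G$ as 2-functors between the two 2-categories and then to construct 2-natural equivalences witnessing that $D$ and $E$ are mutually inverse. First I would make precise the target structures. By the discussion preceding the statement, the canonical fully faithful functor $\Rep(G) \cong \Vc^G \to \cC^G$ factors through $\mathcal{Z}(\cC^G)$, so $E$ indeed lands in fusion categories over $\Rep(G)$; on a $G$-equivariant functor $F\colon \cC \to \cC'$ it produces $F^G\colon \cC^G \to (\cC')^G$ commuting with the embeddings of $\Rep(G)$, and on equivariant natural transformations it is the evident induced transformation. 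Dually, the algebra $\cO(G)$ of functions on $G$ is commutative in $\mathcal{Z}(\cD)$, its category of modules $\cD_G$ carries a $G$-action induced by the translation action of $G$ on $\cO(G)$, and a functor over $\Rep(G)$ induces a $G$-equivariant functor between de-equivariantizations; this defines $D$ on all cells.

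The heart of the argument is to produce natural equivalences $\cC \simeq (\cC^G)_G$ of categories with $G$-action and $(\cD_G)^G \simeq \cD$ of categories over $\Rep(G)$. For $(\cC^G)_G \simeq \cC$ I would show that the forgetful functor $\cC^G \to \cC$ realizes $\cC$ as the category of $\cO(G)$-modules internal to $\cC^G$: the assignment $X \mapsto \cO(G)\otimes X$, carrying the tautological module structure together with the equivariant structure transported from $\cO(G)$, is an equivalence onto $(\cC^G)_G$, with inverse given by taking the fibre of a module over $e\in G$. For $(\cD_G)^G \simeq \cD$ I would run the same bookkeeping in reverse: a $G$-equivariant $\cO(G)$-module is determined by its component at $e \in G$, yielding an object of $\cD$, and conversely every $X\in\cD$ gives the free module $\cO(G)\otimes X$ equipped with its canonical $G$-equivariant structure. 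In both cases the Frobenius--Perron dimensions match exactly, since $\FPdim((\cC^G)_G)=\FPdim(\cC^G)/|G|=\FPdim(\cC)$ and $\FPdim((\cD_G)^G)=|G|\,\FPdim(\cD_G)=\FPdim(\cD)$; combined with full faithfulness of the comparison functors this forces essential surjectivity, and essential surjectivity of $E$ on objects is in any case guaranteed by \cite[Proposition 2.10]{ENO2}, which says every fusion category over $\Rep(G)$ arises as an equivariantization.

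It then remains to upgrade these pointwise equivalences to a 2-equivalence. I would verify that the comparison functors are monoidal and compatible with the $G$-action, respectively with the central embedding of $\Rep(G)$, that they are 2-natural in $\cC$ and in $\cD$, and that $E$ and $D$ induce equivalences on Hom-categories, so that the correspondence is an equivalence at the level of both 1- and 2-morphisms. In the braided case, when $G$ acts through braided autoequivalences, $\cC^G$ inherits a braiding and the embedding of $\Rep(G)$ lands in the M\"uger center $\mathcal{Z}_2(\cC^G)$; the same constructions apply once the braiding is carried along, so the braided version follows by tracking one additional datum through every step.

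The main obstacle is the identification of $\cO(G)$-modules in $\cC^G$ with $\cC$ itself, together with the coherence bookkeeping it entails. Establishing that $X\mapsto \cO(G)\otimes X$ is monoidal and an equivalence of $G$-categories requires transporting the equivariance constraints $\phi(g,h)$ and the tensor constraints $\psi(g)$ through the module structure and checking that the resulting associativity and equivariance data agree, and assembling these compatibilities into a genuine 2-equivalence --- rather than a mere family of equivalences of underlying categories --- is where the real work lies.
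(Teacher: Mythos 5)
This proposition is quoted verbatim from \cite[Theorem 4.18, Proposition 4.19, Proposition 4.22]{DGNO}; the paper supplies no proof of its own, only the citation. Your outline reproduces the standard argument of that reference --- the two comparison equivalences $(\cC^G)_G\simeq\cC$ and $(\cD_G)^G\simeq\cD$ via induction $X\mapsto\bigoplus_{g}g_*(X)$ (which is what your ``$\cO(G)\otimes X$'' for $X\in\cC$ must mean, since $\cO(G)$ lives in $\cC^G$ rather than $\cC$) and fibre-at-$e$, the Frobenius--Perron dimension count, essential surjectivity from \cite[Proposition 2.10]{ENO2}, and the braided refinement --- so it is essentially the same approach, correctly identifying where the remaining coherence work lies.
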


In short, equivariantization and de-equivariantization are mutually inverse functors.
 
\subsection{The picard group.}

Let $\cC$ be a fusion category, a left $\cC$-module category $\cM$ is a $\mathbb{C}$-linear semisimple category  with an action of $\cC$ denoted by 
\begin{align*}
 (X,M) \mapsto X*M & \text{ for }X \in \cC\text{ and } M \in \cM.
\end{align*}
The action has an associativity and unit constraint denoted by $a_{X,Y,M}: (X \otimes Y)*M \to X*(Y*M)$ and $l_M: \mathbf{1}* M\to M $ for $X,Y \in \cC$ and $M \in \cM$ that satisfy certain conditions. A right $\cC$-module category is defined in a similar way, see \cite{ENO3}

Given $\cC$ and $\cC'$ fusion categories, a $(\cC, \cC')$-bimodule category is a left $(\cC \boxtimes \cC^{'op})$-module category. 

If $\cM$ is a right $\cC$-module category and $\cN$ is a left $\cC$-module category, the tensor product of $\cM$ and $\cN$ over $\cC$ can be understood as the category of exact left $\cC$-module functors  $\cM \boxtimes_\cC \cN:= \operatorname{Fun}_{\cC,re}(\cM^{op},\cN)$, see {\cite{ENO3}}.  

In \cite[Remark 3.6]{ENO3}, it is ensured that the tensor product over $\cD$ of a $(\cC,\cD)$-bimodulo by a $(\cD,\cC')$-bimodulo has a $(\cC,\cC')$-bimodule structure. 

		\begin{definition}[\cite{ENO3}]
		A ($\cC$-$\cC'$)-bimodule category $\cM$ is invertible if there exist bimodule equivalences such that 
		\begin{eqnarray*}
		            \cM^{op} \boxtimes_\cC \cM &\cong& \cC', \text{ and }\\
		            \cM \boxtimes_{\cC'} \cM^{op} &\cong& \cC.
		\end{eqnarray*}
		
		\end{definition}

If $\cB$ is a braided fusion category, any left $\cB$-module category can be endowed with a structure of $(\cB,\cB)$-bimodule, so we can speak about invertible left $\cB$-module. 

\begin{definition}
For a braided fusion category $\cB$, the Picard 2-group is denoted by $\underline{\underline{\operatorname{Pic}(\cB)}}$ and  described as follows: Objects are invertible left $\cB$-modules, 1-morphisms are module equivalences, and 2-morphisms are isomorphisms between such equivalences.  $\underline{\underline{\operatorname{Pic}(\cB)}}$ can be truncated to a categorical group $\underline{\operatorname{Pic}(\cB)}$ if we forget the 2-morphisms and consider 1-morphisms up to isomorphism. Similarly, $\underline{\operatorname{Pic}(\cB)}$ can be truncated to the group $\operatorname{Pic}(\cB)$ called the Picard group of $\cB$.
\end{definition}

If $\cM$ is an invertible module category over $\cB$ and $\cB_{\cM}^*=\operatorname{Fun}_\cB(\cM,\cM)$, we obtain an equivalence $\cB \boxtimes \cB^{rev}\cong \mathcal{Z}(\cB_\cM^*)$. The compositions
\begin{eqnarray*}
\alpha^+&:& \cB=\cB \boxtimes \mathbf{1} \subset \cB \boxtimes \cB^{rev} \cong \mathcal{Z}(\cB_\cM^*) \to \cB_\cM^*,\\
\alpha^-&:& \cB = \mathbf{1} \boxtimes \cB^{rev} \subset \cB \boxtimes \cB^{rev} \cong \mathcal{Z}(\cB_\cM^*) \to \cB_\cM^*,
\end{eqnarray*}
are called alpha-induction functors, see \cite{ostrik1, ENO3}. In particular, for every invertible module $\cM$, the alpha-inductions are equivalences. Thus 
\[ \alpha^+=\alpha^- \circ \theta_\cM,\]
where $\theta_\cM:\cB \to \cB$ is a braided autoequivalence. For a more specific definition of the alpha-induction functors,  see \cite{davydov2013picard}. We will give a brief description.

For each $\cM \in \Pic{\cB}$ the functors  $\alpha^{\pm}$ are defined as follows:
\begin{eqnarray*}
\alpha^{\pm}_\cM: \cB \to \cB_{\cM}^*: X \to X \otimes-;
\end{eqnarray*}

The tensor structure  for $\alpha^{\pm}_\cM$ is defined by:

\begin{eqnarray}
{\Small
\xymatrix{
\alpha^+_\cM(X)(Y \otimes M)= X \otimes Y \otimes M \ar[r]^{c_{x,y}} & Y\otimes X\otimes M=Y \otimes \alpha_\cM^+(X)(M),
}}\label{eq: mult izq}\\
{\Small
\xymatrix{
\alpha^-_\cM(X)(M \otimes Y)= X \otimes Y \otimes M \ar[r]^{c^{-1}_{y,x}} & Y\otimes X\otimes M=  \alpha_\cM^-(X)(M)\otimes Y,
}\label{eq: mult der}}
\end{eqnarray}
for each $X,Y \in \cB$ and $M \in \cM$.

\begin{theorem}\label{theorem: Pic equivalente Aut}
\cite[Theorem 5.2.]{ENO3} For a non-degenerate braided fusion category $\cB$, the functor $\cM \to \theta_\cM$ is an equivalence between $\underline{\operatorname{Pic}}(\cB)$ and $\underline{\Autb{\cB}}$.
\end{theorem}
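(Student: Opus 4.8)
The plan is to prove that the assignment $\Phi:\cM\mapsto\theta_\cM$ underlies a monoidal functor of categorical groups $\underline{\operatorname{Pic}}(\cB)\to\underline{\Autb{\cB}}$, and then to check that this functor is essentially surjective and fully faithful. First I would settle well-definedness and monoidality. The starting data is the canonical identification $\mathcal{Z}(\cB_{\cM}^*)\cong\cB\boxtimes\cB^{rev}$, valid for every invertible $\cM$, together with the two braided embeddings $\alpha^{\pm}_\cM:\cB\to\cB_{\cM}^*$ of \eqref{eq: mult izq}--\eqref{eq: mult der}. Because $\cM$ is invertible both alpha-inductions are equivalences, so $\theta_\cM:=(\alpha^-_\cM)^{-1}\circ\alpha^+_\cM$ is an autoequivalence of $\cB$; that it is \emph{braided} is read off by comparing the tensor structures in \eqref{eq: mult izq} and \eqref{eq: mult der}, which differ exactly by a braiding isomorphism. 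A module equivalence $\cM\xrightarrow{\sim}\cM'$ induces a monoidal equivalence $\cB_{\cM}^*\cong\cB_{\cM'}^*$ commuting with both $\alpha^+$ and $\alpha^-$, hence a monoidal isomorphism $\theta_\cM\cong\theta_{\cM'}$, making $\Phi$ a functor. For monoidality I would use that alpha-induction is compatible with the relative tensor product of module categories: for invertible $\cM,\cN$ there is an identification under which $\alpha^+_{\cM\boxtimes_\cB\cN}$ and $\alpha^-_{\cM\boxtimes_\cB\cN}$ factor through $\cB_{\cM}^*$ and $\cB_{\cN}^*$, yielding $\theta_{\cM\boxtimes_\cB\cN}\cong\theta_\cM\circ\theta_\cN$ with the expected coherence.

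Next comes essential surjectivity, which I expect to be the crux of the argument and the one place where non-degeneracy is genuinely indispensable. Given a braided autoequivalence $\theta$ of $\cB$, I would realize it as $\theta_{\cM_\theta}$ by building an invertible module $\cM_\theta$ from the ``graph'' of $\theta$. Non-degeneracy enters through $\mathcal{Z}(\cB)\cong\cB\boxtimes\cB^{rev}$: the assignment $X\mapsto X\boxtimes\theta(X)$ defines a braided functor $\cB\to\cB\boxtimes\cB^{rev}\cong\mathcal{Z}(\cB)$ whose image is a maximal isotropic (Lagrangian) subcategory, precisely because $\theta$ intertwines the braiding and the double braidings on the two tensor factors cancel. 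Such a Lagrangian embedding corresponds to an invertible $\cB$-module $\cM_\theta$ whose two alpha-inductions differ exactly by $\theta$, so that tracing definitions gives $\theta_{\cM_\theta}\cong\theta$. The main technical obstacle is verifying that the graph subcategory is Lagrangian and that it really produces an \emph{invertible} bimodule category rather than merely a module; this is exactly the step that fails without $\mathcal{Z}_2(\cB)\cong\Vc$.

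Finally I would establish full faithfulness. On automorphism groups, the self-equivalences of a fixed invertible module $\cM$ are governed by $\Inv(\cB)$, while the monoidal natural automorphisms of the braided autoequivalence $\theta_\cM$ form $\operatorname{Aut}^{\otimes}(\Id_\cB)$; non-degeneracy supplies a canonical isomorphism $\Inv(\cB)\cong\operatorname{Aut}^{\otimes}(\Id_\cB)$, sending an invertible object $g$ to the monoidal natural automorphism given by its double braiding $X\mapsto c_{g,X}\circ c_{X,g}$, and one checks this matches the two automorphism groups compatibly with $\Phi$. Injectivity on isomorphism classes then follows by reversing the graph construction: if $\theta_\cM\cong\theta_{\cM'}$, the corresponding Lagrangian graphs in $\cB\boxtimes\cB^{rev}$ coincide, forcing $\cM\cong\cM'$ as $\cB$-modules. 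Combining monoidality, essential surjectivity, and full faithfulness yields that $\Phi$ is an equivalence of categorical groups $\underline{\operatorname{Pic}}(\cB)\simeq\underline{\Autb{\cB}}$, as claimed.
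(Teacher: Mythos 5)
This theorem is quoted in the paper from \cite[Theorem 5.2]{ENO3} without proof, so there is no internal argument to compare against; I can only judge your sketch against the standard arguments (ENO3, and Davydov--Nikshych's treatment of the Picard crossed module, which is the route your outline most closely follows). The first and third parts of your plan are sound in outline: monoidality of $\cM\mapsto\theta_\cM$ via compatibility of alpha-induction with $\boxtimes_\cB$, and full faithfulness via the identifications $\operatorname{Aut}_{\underline{\operatorname{Pic}}(\cB)}(\cM)\cong\Inv(\cB_\cM^*)\cong\Inv(\cB)$ and $\operatorname{Aut}(\theta_\cM)\cong\operatorname{Aut}_\otimes(\Id_\cB)$, matched through the double braiding $g\mapsto c_{g,-}\circ c_{-,g}$, which is an isomorphism $\Inv(\cB)\to\operatorname{Aut}_\otimes(\Id_\cB)$ precisely by non-degeneracy.

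The genuine gap is in essential surjectivity, which you yourself identify as the crux. The ``graph'' of $\theta$ is not a subcategory of $\cB\boxtimes\cB^{rev}$, Lagrangian or otherwise. The assignment $X\mapsto X\boxtimes\theta(X)$ is a monoidal functor but not a braided one (braidedness would force $c_{\theta X,\theta Y}=c^{-1}_{\theta Y,\theta X}$, i.e.\ symmetry), and the full subcategory generated by its image is not the graph: $(X\boxtimes\theta(X))\otimes(Y\boxtimes\theta(Y))\cong\bigoplus_{Z,W}N_{XY}^Z N_{XY}^W\,Z\boxtimes\theta(W)$ contains off-diagonal summands $Z\boxtimes\theta(W)$ with $Z\not\cong W$. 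Likewise the ``cancellation of double braidings'' fails: the double braiding of two graph objects is $u\boxtimes\theta(u)^{-1}$ with $u=c_{Y,X}\circ c_{X,Y}$, which is the identity only when $u$ acts as a scalar, i.e.\ essentially only in the pointed case. The correct object is not a Lagrangian \emph{subcategory} but a Lagrangian \emph{algebra}: one takes the canonical Lagrangian algebra $L=\bigoplus_{X\in\Irr(\cB)}X\boxtimes X^*$ in $\cB\boxtimes\cB^{rev}\cong\mathcal{Z}(\cB)$ and transports it by $\Id\boxtimes\theta$ to $L_\theta=\bigoplus_X X\boxtimes\theta(X^*)$; the category of $L_\theta$-modules is the invertible $\cB$-module $\cM_\theta$ with $\theta_{\cM_\theta}\cong\theta$, and injectivity on isomorphism classes likewise goes through the bijection between Lagrangian algebras and module categories rather than through subcategories. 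As written, your mechanism for producing $\cM_\theta$ (and for the injectivity step that reuses it) does not work outside the pointed case, so the proof is incomplete at its central step, even though the overall architecture is the right one.
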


\section{Fermionic fusion categories}
All definitions and results of this section was presented in \cite{galindo2017categorical}.
\begin{definition}[\cite{galindo2017categorical}]\label{definition: spin y fermionic categories}
	Let $\cC$ be a fusion category. An object $(f,\sigma_{-,f})\in \mathcal{Z}(\cC)$ is called a \emph{fermion} if $f \otimes f \cong 1$ and  $\sigma_{f,f}= -\id_{f \otimes f}$.
	\begin{enumerate}[leftmargin=*,label=\rm{(\alph*)}]
		\item A \emph{fermionic fusion category} is a fusion category with a fermion. A fermionic fusion  category $\cC$ with fermion $(f,\sigma_{-,f})$ is denoted by the pair $(\cC,(f,\sigma_{-,f}))$.
		\item A  braided fusion category $\cb$ with braiding $c$ and a fermion of the form $(f,c_{-,f})$ is called a \emph{spin-braided fusion category}. This spin-braided fusion category will be denoted by $(\cB, f)$ because the half-braiding is determined in an obvious way by $c_{-,f}$.
	\end{enumerate}
\end{definition}

\begin{example}
The categories $\operatorname{Vec}_A^{(\omega_k,c_k)}$ presented in Example \ref{example: categorias puntedadas dimension 4} are spin-braided fusion categories with fermion $f$.
\end{example}

\begin{example}[Ising categories as spin-braided fusion categories]\label{Example:Ising}
	%The spin-braided categories of dimension four are classified in two cases, the Ising case  and the pointed case. 
	By an Ising fusion category, we mean a non-pointed fusion category of Frobenius-Perron dimension 4.
	
	Ising categories have 3 classes of 	simple objects $\mathbf{1}, f, \sigma$. The Ising fusion rules are  
	\begin{align*}
	\sigma^2=1+f,&& f^2=1,&& f\sigma=\sigma f=\sigma.    
	\end{align*}
	The associativity constraints are given
	by the $F$-matrices
	
	\begin{align*}
	F_{\sigma\sigma\sigma}^\sigma=\frac{\epsilon}{\sqrt{2}}\left( \begin{array}{cc}
	1 & 1  \\
	1 & -1 \end{array} \right),&& F_{f \sigma f}^\sigma=F_{\sigma f\sigma}^f=-1,
	\end{align*}where $\epsilon\in \{1,-1\}$. 
	
	The Ising fusion categories admit several braided structures, and in all cases $f$ is a fermion. An Ising braided fusion category  is always non-degenerate. In \cite[Appendix B]{DGNO}, it was proven that there are 8 equivalence classes of Ising braided fusion categories.
\end{example}

\begin{definition}[\cite{galindo2017categorical}]\label{fermionicfunctor}
	Let $(\cC,(f,\sigma_{-,f}))$ and $(\cC',(f',\sigma'_{-,f'}))$ be  fermionic fusion categories. A  tensor functor $(F,\tau):\cC\to \cC'$ is called a \emph{fermionic functor}  if $F(f)\cong f'$  and  the diagram 
	\begin{equation}\label{diagram:fermion functor}
	\xymatrix{
		F(V\otimes f) \ar[d]_{\tau_{V,f}} \ar[rr]^{F(\sigma_{V,f})} &&  F(f\otimes V)\ar[d]^{\tau_{f,V}} \\
		F(V)\otimes F(f)\ar[d]_{\id_{F(V)} \otimes \phi }  && F(f)\otimes F(V) \ar[d]^{\phi \otimes \id_{F(V)}} \\
		F(V) \otimes f' \ar[rr]_{\sigma'_{F(V),f'}} && f' \otimes F(V)
	} 
	\end{equation}
	commutes for each $V\in \cC$, where $\phi$ is an isomorphism between $F(f)$ and $f'$.
\end{definition}

Let $(\cC,(f, \sigma_{-,f}))$ be a fermionic fusion category. We will denote by $\underline{\Autr{\cC,f}}$ the full monoidal subcategory of $\underline{\Autr{\cC}}$ whose objects are fermionic tensor autoequivalences. The group of isomorphism classes of autoequivalences in $\underline{\Autr{\cC,f}}$ is denoted by $\Autr{\cC,f}$.

\begin{example}\label{Example: spin-braided functors}
	If $(\cB,f)$ and $(\cB',f')$ are  spin-braided fusion categories, and  $F:\cB\to \cB'$ is a braided functor such that $F(f)\cong f'$; then,  $F$ is a fermionic functor. In fact, by  definition of a braided functor, $F$ satisfies Diagram (\ref{diagram:fermion functor}), see \cite[Definition 8.1.7.]{ENO}.
	
	For spin-braided fusion categories, we will denote by $\underline{\operatorname{Aut}_\otimes^{br}(\cB,f)}$ the full monoidal subcategory of $\underline{\operatorname{Aut}_\otimes^{br}(\cB)}$ whose objects are  braided tensor autoequivalences described in Example \ref{Example: spin-braided functors}. The group of isomorphism classes of spin-braided  autoequivalences in $\underline{\Autb{\cB,f}}$ is denoted by $\Autb{\cB,f}$.
\end{example}

Next, we will present the fermionic action concept, but before we will recall some important facts.

If $(\widetilde{G},z)$ is a super-group, the exact sequence
\begin{eqnarray*}
	1 \longrightarrow \langle z\rangle \longrightarrow \widetilde{G} \longrightarrow \widetilde{G}/\langle z \rangle  \longrightarrow 1,
\end{eqnarray*}
defines and is defined by a unique element $\alpha \in H^2(\widetilde{G}/\langle z \rangle ,\zdos)$. From now on we will identify a super-group $(\widetilde{G},z)$ with the  associated pair $(G,\alpha)$ according to the convenience of the case. We set the following notation $G:=\widetilde{G}/\langle z \rangle$ and $\alpha \in H^2(G,\zdos)$.

\begin{lemma}[\cite{galindo2017categorical}]\label{lemaasignacion}
	Let  $G$ be a finite group. There is a canonical correspondence between equivalence classes of monoidal functors $\widetilde{\rho}: \underline{G} \longrightarrow \underline{ \Autb{\SV}}$ and elements of $H^2(G,\zdos)$.
\end{lemma}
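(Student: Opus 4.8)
The plan is to first identify the monoidal $2$-group $\underline{\Autb{\SV}}$ explicitly and then invoke the standard dictionary between monoidal functors out of a discrete group and low-degree group cohomology.

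First I would determine $\underline{\Autb{\SV}}$. Since $\SV$ has exactly two isomorphism classes of simple objects, the unit $\mathbf{1}$ and the odd line $f$ with $f\ot f\cong \mathbf{1}$, any braided tensor autoequivalence $F$ must fix $\mathbf{1}$ and send the unique nontrivial invertible object $f$ to an invertible object squaring to $\mathbf{1}$, hence $F(f)\cong f$. Thus every object of $\underline{\Autb{\SV}}$ is isomorphic to $\Id_{\SV}$; that is, $\Autb{\SV}$ is the trivial group. Next I would compute the automorphism group of the unit object $\Id_{\SV}$: a monoidal natural automorphism $\eta$ satisfies $\eta_{\mathbf{1}}=\id$ and $\eta_f=\lambda\in\mathbb{C}^\times$ with $\lambda^2=\eta_{f\ot f}=\eta_{\mathbf{1}}=1$, so $\lambda=\pm 1$ and $\Autr{\Id_{\SV}}\cong\zdos$ (compatibility with the braiding is automatic for natural transformations of the identity). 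Hence $\underline{\Autb{\SV}}$ is the $2$-group with trivial $\pi_0$ and $\pi_1\cong\zdos$; moreover $\pi_0$ acts trivially on $\pi_1$ since $\Aut(\zdos)$ is trivial.

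With this description in hand, I would analyze a monoidal functor $\widetilde\rho:\underline{G}\to\underline{\Autb{\SV}}$. Because every $\widetilde\rho(g)$ is isomorphic to $\Id_{\SV}$, after replacing $\widetilde\rho$ by an equivalent monoidal functor I may assume $\widetilde\rho(g)=\Id_{\SV}$ for all $g$. Then the tensor constraint $\phi(g,h)$ of $\widetilde\rho$ is a monoidal natural automorphism of $\Id_{\SV}$, i.e.\ an element $\phi(g,h)\in\zdos$, and the coherence axiom for $\phi$ becomes, written additively, $\phi(h,k)-\phi(gh,k)+\phi(g,hk)-\phi(g,h)=0$; that is, $\phi\in Z^2(G,\zdos)$ with trivial coefficients (the $G$-action being trivial by the remark above). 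Finally I would check that a monoidal natural isomorphism between two such normalized functors is precisely a $1$-cochain $\eta:G\to\zdos$ modifying $\phi$ by its coboundary, so that equivalence classes of monoidal functors are in bijection with $H^2(G,\zdos)$. This correspondence is canonical because the trivial $2$-cocycle corresponds to the canonical (strict) action.

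As a useful cross-check, the whole statement also follows from the braided analogue of Proposition \ref{Proposition:Obstruction-bosonic}: the unique homomorphism $\rho:G\to\Autb{\SV}=1$ has vanishing obstruction $O_3(\rho)$ and admits the canonical strict lifting, so by that proposition the set of equivalence classes of liftings---which are exactly the monoidal functors $\widetilde\rho:\underline{G}\to\underline{\Autb{\SV}}$ in the sense of Definition \ref{definition: lifting }---forms a torsor over $H^2_\rho(G,\widehat{K_0(\SV)})=H^2(G,\zdos)$, and the canonical lifting provides the base point identifying the torsor with the group. The main obstacle is the bookkeeping in the reduction step: justifying that an arbitrary $\widetilde\rho$ is monoidally equivalent to a normalized one with $\widetilde\rho(g)=\Id_{\SV}$, and verifying that under this normalization the monoidal-functor coherence and the notion of equivalence translate exactly into the $2$-cocycle and $2$-coboundary conditions for $\zdos$-valued cochains with trivial action. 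Everything else reduces to the routine identification $\Autr{\Id_{\SV}}\cong\widehat{K_0(\SV)}\cong\zdos$.
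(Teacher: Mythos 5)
Your proof is correct: the identification of $\underline{\Autb{\SV}}$ as the $2$-group with trivial $\pi_0$ and $\pi_1\cong\zdos$ (with trivial action), followed by the standard dictionary between monoidal functors out of $\underline{G}$ and $H^2(G,\zdos)$, is exactly the argument behind this lemma. The paper itself states the result without proof (citing \cite{galindo2017categorical}), and your write-up -- including the cross-check via the lifting torsor of Proposition \ref{Proposition:Obstruction-bosonic} applied to the trivial homomorphism $G\to\Autb{\SV}=1$ -- is consistent with the standard treatment in that reference.
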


Given a  functor $\widetilde{\rho}: \underline{G} \longrightarrow \underline{ \Autb{\SV}}$, the corresponding element in $H^2(G,\zdos)$ will be denoted by $\theta_{\widetilde{\rho}}$.

\begin{definition}[\cite{galindo2017categorical}]\label{accion}
	Let $(\cC,(f,\sigma_{-,f}))$ be a fermionic fusion category and $(G,\alpha)$ a super-group. A \emph{fermionic action} of $(G,\alpha)$ on $(\cC,(f,\sigma_{-,f}))$ is a monoidal functor
	\begin{eqnarray*}
		\widetilde{\rho}: \underline{G} \longrightarrow \underline{\operatorname{Aut}_\otimes(\cC,f)} ,
	\end{eqnarray*}
	such that the restriction functor $\widetilde{\rho}: \underline{G} \longrightarrow \underline{\Autr{\langle f \rangle}}$ satisfies  $\theta_{\widetilde{\rho}} \cong \alpha$ in $H^2(G,\zdos)$, see Lemma \ref{lemaasignacion}.

	A fermionic action of a finite super-group $(G,\alpha)$ on a spin-braided fusion category $(\cB,f)$ is defined in a similar way. In this case,  the monoidal functor $\widetilde{\rho}:\underline{G}\to \underline{\Autb{\cB,f}}$ is defined over $\underline{\Autb{\cB,f}}$ and the data satisfies the same condition. 
\end{definition}

Two of the most important results presented in \cite{galindo2017categorical} are the following: 

\begin{theorem}[\cite{galindo2017categorical}]\label{theorem: equivalencia dequivariantizacion- equivariantizacion}
	Let $(\widetilde{G},z)$ be a finite super-group. Then the equivariantization and de-equivariantization processes define a biequivalence of 2-categories between fermionic fusion categories with a fermion action of  $(\widetilde{G},z)$ and fusion categories over $\Rep(\widetilde{G},z)$.
\end{theorem}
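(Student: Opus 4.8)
The plan is to reduce the statement to the bosonic equivalence of Proposition~\ref{proposition: equivalencia equivariantizacion} by splitting the symmetric category $\Rep(\widetilde{G},z)$ into its Tannakian part $\Rep(G)$ and a complementary copy of super-vector spaces. The bridge is an observation about the super-group data itself. Through the surjection $\widetilde{G}\to G$ the Tannakian category $\Rep(G)$ embeds into $\Rep(\widetilde{G},z)$, and de-equivariantizing by it collapses $\Rep(\widetilde{G},z)$ onto $\Rep(\langle z\rangle,z)\cong\SV$, the copy of $\SV$ generated by the central element $z$; dually $\SV^{G}\cong\Rep(\widetilde{G},z)$ for the $G$-action on $\SV$ whose class in $H^{2}(G,\zdos)$ is $\alpha$ under Lemma~\ref{lemaasignacion}. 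Thus the datum \emph{super-group $(G,\alpha)$} is interchangeable with the datum \emph{Tannakian subcategory $\Rep(G)$ together with a fermion of extension class $\alpha$}, and this is exactly what lets a fermionic action be traded for a central functor out of $\Rep(\widetilde{G},z)$.

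For the equivariantization direction I would start from a fermionic fusion category $(\cC,(f,\sigma_{-,f}))$ with a fermionic action $\widetilde{\rho}$ of $(G,\alpha)$ and set $\cD:=\cC^{G}$. Proposition~\ref{proposition: equivalencia equivariantizacion} already makes $\cD$ a fusion category over $\Rep(G)$, supplying a braided functor $\Rep(G)\to\mathcal{Z}(\cD)$. Since every autoequivalence in the image of $\widetilde{\rho}$ is fermionic it fixes $(f,\sigma_{-,f})$, so $f$ acquires a canonical $G$-equivariant structure and descends to a fermion in $\mathcal{Z}(\cC^{G})$, that is, to a braided functor $\SV\to\mathcal{Z}(\cD)$. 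The remaining step is to glue these two central functors into a single braided functor out of $\Rep(\widetilde{G},z)$; here the hypothesis $\theta_{\widetilde{\rho}}\cong\alpha$ is precisely the compatibility needed to identify the combined image with $\SV^{G}\cong\Rep(\widetilde{G},z)$ from the first step, yielding the required braided inclusion $\Rep(\widetilde{G},z)\to\mathcal{Z}(\cD)$.

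For the de-equivariantization direction I would start from a fusion category $\cD$ over $\Rep(\widetilde{G},z)$, restrict its central functor along $\Rep(G)\subset\Rep(\widetilde{G},z)$, and de-equivariantize to obtain $\cC:=\cD_{G}$ with its $G$-action from Proposition~\ref{proposition: equivalencia equivariantizacion}. The central functor $\Rep(\widetilde{G},z)\to\mathcal{Z}(\cD)$ itself descends, upon de-equivariantizing the ambient braided category by the Tannakian $\Rep(G)$, to a braided functor $\Rep(\widetilde{G},z)_{G}\cong\SV\to\mathcal{Z}(\cC)$, i.e.\ to a fermion $f\in\mathcal{Z}(\cC)$. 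Because all of $\Rep(\widetilde{G},z)$ is central and symmetric, $f$ is $G$-invariant, so the induced action is fermionic, and its extension class is $\alpha$ exactly because $f$ comes from the subcategory encoding $\alpha$. That the two assignments are mutually inverse and send fermionic functors to functors over $\Rep(\widetilde{G},z)$ and back (with the corresponding statement on natural transformations) is then formal: the underlying $2$-categorical inverse is delivered by Proposition~\ref{proposition: equivalencia equivariantizacion}, and one only verifies that the fermion/$\SV$-datum is carried along coherently.

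The main obstacle is the gluing in the second step and its inverse in the third: proving that a central functor out of $\Rep(\widetilde{G},z)$ is equivalent data to a central functor out of $\Rep(G)$ together with a fermion whose associated $G$-action has extension class $\alpha$. This requires unwinding the extension $1\to\langle z\rangle\to\widetilde{G}\to G\to 1$ at the categorical level and matching the self-braiding condition $\sigma_{f,f}=-\id$ against the super-braiding of $\Rep(\widetilde{G},z)$, with the cohomological bookkeeping controlled by the identification of Lemma~\ref{lemaasignacion}. Everything else, by construction, reduces to the already-established bosonic equivalence.
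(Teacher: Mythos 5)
The paper does not prove this theorem: it is quoted from \cite{galindo2017categorical}, so there is no in-paper argument to compare yours against. Judged on its own terms, your overall strategy --- run the bosonic biequivalence of Proposition \ref{proposition: equivalencia equivariantizacion} and track the fermionic datum through it, using Lemma \ref{lemaasignacion} and the identification $\SV^G\cong\Rep(\widetilde{G},z)$ to account for the extension class $\alpha$ --- is the right one, and your identification of the ``gluing'' as the essential content is accurate.

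There is, however, a genuine gap in how you propose to carry out that essential step. The claim that, because each $\widetilde{\rho}(g)$ is fermionic, ``$f$ acquires a canonical $G$-equivariant structure and descends to a fermion in $\mathcal{Z}(\cC^G)$'' is not correct in general: the chosen isomorphisms $g_*(f)\cong f$ fail the cocycle condition by exactly the class $\theta_{\widetilde{\rho}}\cong\alpha\in H^2(G,\zdos)$ (pushed into $\operatorname{Aut}(f)\cong\mathbb{C}^\times$), so when $\alpha\neq 0$ there need be no equivariant lift of $f$; and even when such a lift happens to exist, gluing it onto the canonical $\Rep(G)\to\mathcal{Z}(\cC^G)$ only produces a copy of $\Rep(G\times\zdos)$, never the nonsplit $\Rep(\widetilde{G},z)$. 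The ``two central functors to be glued'' framing tacitly assumes the splitting $\widetilde{G}\cong G\times\zdos$. The repair is to equivariantize the subcategory rather than the object: $\langle \mathbf{1},f\rangle\cong\SV$ sits inside $\mathcal{Z}(\cC)$ via $\sigma_{-,f}$, is stable under the $G$-action, and carries the action of class $\theta_{\widetilde{\rho}}\cong\alpha$; its equivariantization is $\SV^G\cong\Rep(\widetilde{G},z)$ in one step, landing in $\mathcal{Z}(\cC)^G\to\mathcal{Z}(\cC^G)$, with the copy of $\Rep(G)$ appearing as the degree-zero part rather than as an independently constructed ingredient. With that substitution the de-equivariantization direction is its mirror image (the image of $\Rep(\widetilde{G},z)$ in $\mathcal{Z}(\cD)$ de-equivariantizes to $\SV\subseteq\mathcal{Z}(\cC)$, whence the fermion and the class $\alpha$), and the remaining $2$-categorical bookkeeping does reduce formally to the bosonic statement as you say.
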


\begin{corollary}[\cite{galindo2017categorical}]\label{coro: equivalencia dequivariantizacion- equivariantizacion braided case}
	Let $(\widetilde{G},z)$ be a finite super-group. Then equivariantization and de-equivariantization processes define a biequivalence of 2-categories between spin-braided fusion categories with fermionic action of  $(\widetilde{G},z)$ compatible with the braiding, and braided fusion categories  $\cD$ over $\Rep(\widetilde{G},z)$ such that  $\Rep(G)\subseteq \mathcal{Z}_2(\cD)$.
\end{corollary}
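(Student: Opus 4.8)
The plan is to deduce the corollary from Theorem~\ref{theorem: equivalencia dequivariantizacion- equivariantizacion} by restricting the biequivalence established there to the objects, $1$-morphisms and $2$-morphisms that carry braided structure, and then checking that the braided data match on the two sides. The one genuinely new ingredient is the \emph{braided} form of the equivariantization/de-equivariantization dictionary for the quotient group $G=\widetilde{G}/\langle z\rangle$ and its Tannakian subcategory $\Rep(G)$, which I would combine with the theorem's bookkeeping of the fermion and of the super part $\Rep(\widetilde{G},z)$. Concretely, I would recall from \cite{DGNO} the background fact: if a braided fusion category $\cB$ carries a $G$-action by \emph{braided} autoequivalences, then $\cB^{G}$ is braided and the canonical copy $\Rep(G)\subseteq\cB^{G}$ lies in the M\"uger center $\mathcal{Z}_2(\cB^{G})$; conversely, if $\cD$ is braided and $\Rep(G)\subseteq\mathcal{Z}_2(\cD)$ is Tannakian, then $\cD_{G}$ is braided, and the two operations are mutually inverse. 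This is the corollary with $\Rep(G)$ replacing the super-group and no fermion present, so the task is to upgrade it fermionically using the theorem.

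For the forward direction I would start from a spin-braided category $(\cB,f)$ with a fermionic action of $(\widetilde{G},z)$ compatible with the braiding, that is, a monoidal functor $\widetilde{\rho}\colon\underline{G}\to\underline{\Autb{\cB,f}}$ landing in the \emph{braided} fermionic autoequivalences. Theorem~\ref{theorem: equivalencia dequivariantizacion- equivariantizacion} already produces $\cD:=\cB^{G}$ together with a central functor $\Rep(\widetilde{G},z)\to\mathcal{Z}(\cD)$, exhibiting $\cD$ as a fusion category over $\Rep(\widetilde{G},z)$. Since the action is by braided autoequivalences, the background fact equips $\cD$ with a braiding for which $\Rep(G)\subseteq\mathcal{Z}_2(\cD)$. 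The conceptual point to emphasize is that it is exactly the \emph{Tannakian} part $\Rep(G)$ that becomes M\"uger-central, whereas the fermion $f$, whose half-braiding is $c_{-,f}$ with $c_{f,f}=-\id$, need not land in the M\"uger center; thus the whole of $\Rep(\widetilde{G},z)$ maps centrally into $\mathcal{Z}(\cD)$ but generally not into $\mathcal{Z}_2(\cD)$. This is precisely why the corollary's hypothesis keeps the two conditions apart.

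For the converse I would take a braided $\cD$ over $\Rep(\widetilde{G},z)$ with $\Rep(G)\subseteq\mathcal{Z}_2(\cD)$ and de-equivariantize by $G$. Because $\Rep(G)$ is Tannakian and M\"uger-central, the background fact makes $\cD_{G}$ braided, while Theorem~\ref{theorem: equivalencia dequivariantizacion- equivariantizacion} simultaneously supplies $\cD_{G}$ with its fermion and its induced fermionic $G$-action; the braiding on $\cD$ then forces the fermion's half-braiding to be $c_{-,f}$, so that $\cD_{G}$ is spin-braided, and forces the action to be by braided autoequivalences. The remaining $2$-categorical content---that braided module functors and the natural transformations between them correspond under these operations---follows formally: the underlying biequivalence of Theorem~\ref{theorem: equivalencia dequivariantizacion- equivariantizacion} already matches $1$- and $2$-morphisms, and being braided is the extra property of compatibility with the braidings, which is preserved by the established dictionary.

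The step I expect to be the main obstacle is pinning down, on the nose, the translation between ``$\widetilde{\rho}$ lands in $\underline{\Autb{\cB,f}}$ rather than merely in $\underline{\Autr{\cB,f}}$'' and ``$\cD$ admits a braiding with $\Rep(G)\subseteq\mathcal{Z}_2(\cD)$''. Concretely, one must verify that the braiding produced on $\cD$ restricts to the given central super-structure on the image of $\Rep(\widetilde{G},z)$, that its restriction to $\Rep(G)$ is exactly the standard Tannakian braiding realizing M\"uger-centrality, and that no braiding data is lost or spuriously introduced through the fermion. Reconciling these three half-braidings---the super-braiding on $\Rep(\widetilde{G},z)$, the genuine braiding on $\cD$, and the M\"uger-centrality of $\Rep(G)$---is where the real work sits; once it is checked, everything else is a formal restriction of the biequivalence already in hand.
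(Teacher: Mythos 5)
The paper states this corollary without proof, simply citing \cite{galindo2017categorical}, so there is no in-paper argument to compare against; your derivation is correct and is the expected one: restrict the biequivalence of Theorem \ref{theorem: equivalencia dequivariantizacion- equivariantizacion} to braided data and combine it with the braided (de-)equivariantization dictionary of \cite{DGNO}, where the hypothesis $\Rep(G)\subseteq \mathcal{Z}_2(\cD)$ is exactly what makes the $G$-grading on $\cD_G$ trivial (cf.\ Proposition \ref{proposition: componente trivial de-equivariantizacion}) so that $\cD_G$ is honestly braided with a braided $G$-action. You also correctly resolve the one delicate point of interpretation, namely that only the Tannakian part $\Rep(G)$ is required to be M\"uger-central while the fermion itself need not be.
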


Given a finite super-group $(G,\alpha)$, a fermionic fusion category $(\cC, (f,\sigma_{-,f}))$, and a  group homomorphism $\rho: G \to \Autr{\cC,f}$. An $\alpha$-lifting of $\rho$ is a fermionic action  $\widetilde{\rho}: \underline{G} \to \underline{\Autr{\cC,f}}$ of $(\widetilde{G},\alpha)$ over $(\cC, (f,\sigma_{-,f}))$ such that the isomorphism class of $\widetilde{\rho}(g)$ is $\rho(g)$ for each $g \in G$.

\section{Obstruction to fermionic actions}\label{section: Obstruction to fermionic actions}

If   $\rho: G \to \Autr{\cC,f}$ is a group homomorphism, the question that we answer in this part is when there exists a fermionic action that realizes $\rho$, i.e., we want to know when there exists a $\alpha$-lifting of $\rho$.

The fermionic obstruction was defined in  \cite{galindo2017categorical} to determine when a group homomorphism $\rho$ is an $\alpha$-lifting.

The existence of an $\alpha$-lifting implies the existence of a lifting for $\rho$ in the sense of Definition \ref{definition: lifting }. Thus, we have that the obstruction $O_3$ in Theorem \ref{Proposition:Obstruction-bosonic} vanishes. 

Let us recall the fermionic obstruction. We consider the $G$-module homomorphism $r:\widehat{K_0(\cC)}\to\widehat{K_0(\langle f \rangle)}\cong \zdos$ defined by restriction.  When $r$ is non-trivial, the exact sequence

\begin{eqnarray*}%\label{sequencek0}
	\xymatrix{
		1 \ar[r] & \Ker{r} \ar@{^{(}->}[r]^i & \widehat{K_0(\cC)} \ar@{->>}[r]^r & \zdos \ar[r] & 1
	}
\end{eqnarray*}
induces a long exact sequence 

{%\Small
	\begin{eqnarray*}
	\xymatrix{
		\,\,\,\, \ar[r] & H^2(G, Ker(r)) \ar[r]^{i_*} & H^2(G,\widehat{K_0(\cC)}) \ar[r]^{r_*}& H^2(G, \zdos) \ar[r]^-{d_2}&  \\
		\ar[r] & H^3(G, Ker(r)) \ar[r]^{i_*} & H^3(G,\widehat{K_0(\cC)}) \ar[r]^{r_*}& H^3(G, \zdos) \ar[r]^-{d_3}& \ldots
	} %\label{sucesionexacta}
	\end{eqnarray*}}

This long exact sequence is used in the following definition.

\begin{definition}\label{O3alt}
	Let $\rho: G\to \Autr{\cC,f}$ be a group homomorphism and $\widetilde{\rho}: \underline{G} \longrightarrow \underline{\Autr{\cC,f}}$ a lifting of $\rho$. For each $\alpha \in H^2(G,\zdos)$, we define 
	\begin{equation*}%\label{def:obstructionalt}
	O_3({\rho}, \alpha):= \begin{cases}
	\theta_{\widetilde{\rho}}/\alpha \in H^2(G,\zdos) & if \quad r \quad\text{ is trivial, }\\
	d_2(\theta_{\widetilde{\rho}}/\alpha)\in  H^3(G,\operatorname{Ker}(r)) & if \quad r \quad \text{ is  non-trivial,}
	\end{cases}
	\end{equation*}
	where $r:\widehat{K_0(\cC)}\to\widehat{K_0(\langle f \rangle)}$ is the restriction map defined above.
\end{definition}

In Theorem \ref{theorem:bstruction fermionicaction}, we establish the independence of the obstruction on the choice of the lifting  $\widetilde{\rho}$, the existence of an $\alpha$-lifting in terms of a cohomological value, and a correspondence of liftings with a certain subgroup in $H^2(G, \widehat{K_0(\cC)})$.

\begin{theorem}[\cite{galindo2017categorical}]\label{theorem:bstruction fermionicaction}
	
	Let $(G,\alpha)$ be a finite super-group and $\rho: G \to \Autr{\cC,f}$ a group homomorhism with $\widetilde{\rho}: \underline{G} \to \underline{\Autr{\cC,f}}$ a lifting of $\rho$. Then
	\begin{enumerate}[leftmargin=*,label=\rm{(\alph*)}]
		\item the element $O_3({\rho}, \alpha)$ does not depend on the  lifting.
		\item The homomorphism $\rho$ has lifting to a fermionic action of $(G,\alpha)$ if and only if $O_3({\rho},\alpha)=0$.
		\item The set  of  equivalence  classes  of  $\alpha$-liftings of $\rho$ is  a  torsor  over 
		\[\operatorname{Ker}\Big( r_*: H^2(G,\widehat{K_0(\cC)}) \to  H^2(G, \zdos) \Big ).\]
	\end{enumerate}
\end{theorem}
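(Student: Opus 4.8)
The plan is to bootstrap from the bosonic obstruction theory of Proposition~\ref{Proposition:Obstruction-bosonic} and reduce all three assertions to a single compatibility between the torsor action $\triangleright$ and the restriction map $r$. The starting observation is that an $\alpha$-lifting is, forgetting the fermionic condition, an ordinary (bosonic) lifting of $\rho$ in the sense of Definition~\ref{definition: lifting }, together with the one extra requirement $\theta_{\widetilde\rho}=\alpha$ in $H^2(G,\zdos)$. Hence I would first invoke Proposition~\ref{Proposition:Obstruction-bosonic}: since $\rho$ is assumed to admit a lifting $\widetilde\rho$, the obstruction $[O_3(\rho)]$ vanishes, and the set of equivalence classes of bosonic liftings is a torsor over $H^2_\rho(G,\widehat{K_0(\cC)})$, with $\mu\triangleright\widetilde\rho$ exhausting all liftings as $\mu$ ranges over this group. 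Everything then hinges on tracking the invariant $\theta$ across this torsor.

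The heart of the argument is the claim that
\[\theta_{\mu\triangleright\widetilde\rho}=\theta_{\widetilde\rho}+r_*(\mu),\qquad \mu\in H^2(G,\widehat{K_0(\cC)}),\]
where $r_*$ is induced by $r:\widehat{K_0(\cC)}\to\widehat{K_0(\langle f \rangle)}\cong\zdos$. To set this up I would identify $\theta_{\widetilde\rho}$ concretely, via Lemma~\ref{lemaasignacion}, with the class of the $\zdos$-valued $2$-cocycle obtained by restricting the tensor structure $\phi(g,h)$ of $\widetilde\rho$ to $\langle f \rangle\cong\SV$ and using $\operatorname{Aut}_\otimes(\Id_{\langle f \rangle})\cong\zdos$. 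The action $\triangleright$ multiplies $\phi(g,h)$ by a representing cocycle $\mu(g,h)\in\operatorname{Aut}_\otimes(\Id_\cC)\cong\widehat{K_0(\cC)}$, and restriction to $\langle f \rangle$ sends $\mu(g,h)$ to $r(\mu(g,h))$; passing to cohomology gives the displayed formula. I expect this compatibility to be the main obstacle: it amounts to the naturality of Galindo's obstruction-and-torsor description with respect to the monoidal restriction $2$-functor $\underline{\Autr{\cC,f}}\to\underline{\Autr{\langle f \rangle}}$, together with the identification of the induced map on $\operatorname{Aut}_\otimes(\Id)$ with $r$, and this is where the cocycle-level bookkeeping must be done carefully.

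Granting the compatibility, the three parts fall out by a chase in the long exact sequence of the definition. When $r$ is trivial we have $r_*=0$, so $\theta_{\widetilde\rho}$ is independent of the chosen lifting; thus $O_3(\rho,\alpha)=\theta_{\widetilde\rho}/\alpha$ is well defined, proving (a), and a fermionic lifting exists exactly when some (equivalently every) bosonic lifting already satisfies $\theta_{\widetilde\rho}=\alpha$, i.e.\ when $O_3(\rho,\alpha)=0$, proving (b). When $r$ is non-trivial, the formula shows $\theta_{\widetilde\rho}/\alpha$ is well defined only modulo $\operatorname{Im}(r_*)$; since $\operatorname{Im}(r_*)=\Ker{d_2}$ by exactness, applying $d_2$ yields the lifting-independent element $O_3(\rho,\alpha)=d_2(\theta_{\widetilde\rho}/\alpha)$, again giving (a). A fermionic lifting exists iff some $\mu\triangleright\widetilde\rho$ has $\theta=\alpha$, i.e.\ iff $\theta_{\widetilde\rho}/\alpha\in\operatorname{Im}(r_*)=\Ker{d_2}$, which is precisely $O_3(\rho,\alpha)=0$; this settles (b) in the remaining case.

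For (c), assuming $\alpha$-liftings exist, I would fix one, $\widetilde\rho_0$, so that $\theta_{\widetilde\rho_0}=\alpha$. By the compatibility formula, $\mu\triangleright\widetilde\rho_0$ is again an $\alpha$-lifting iff $r_*(\mu)=0$, i.e.\ iff $\mu\in\Ker{r_*}$. Since the bosonic liftings form a torsor over $H^2(G,\widehat{K_0(\cC)})$, the $\alpha$-liftings are exactly the sub-orbit of $\widetilde\rho_0$ under $\Ker{r_*}$, hence a torsor over $\operatorname{Ker}\big(r_*:H^2(G,\widehat{K_0(\cC)})\to H^2(G,\zdos)\big)$, as claimed. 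This is consistent with the trivial-$r$ case, where $r_*=0$ and $\Ker{r_*}$ is all of $H^2(G,\widehat{K_0(\cC)})$.
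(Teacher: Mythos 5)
Your argument is correct and is essentially the intended one: the paper itself only imports this theorem from \cite{galindo2017categorical} without reproving it, but the way Definition \ref{O3alt} is built on the long exact sequence makes clear that the proof is exactly your reduction to the bosonic torsor of Proposition \ref{Proposition:Obstruction-bosonic} together with the compatibility $\theta_{\mu\triangleright\widetilde{\rho}}=\theta_{\widetilde{\rho}}+r_*(\mu)$ and a chase through $\operatorname{Im}(r_*)=\Ker{d_2}$. You correctly flag that the only real work is verifying that compatibility at the cocycle level (restriction of the modified monoidal structure to $\langle f\rangle$ identifies the induced map on $\operatorname{Aut}_\otimes(\Id)$ with $r$), and the rest of your deductions for (a), (b), (c) follow cleanly.
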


\section[Fermionic actions on spin-braided fusion categories of dimension four]{ Fermionic actions on non-degenerate spin-braided fusion categories of dimension four}\label{accionsv}

Up to equivalence, all braided fusion categories with dimension four was presented in Example \ref{example: categorias puntedadas dimension 4}, as pointed braided fusion categories with fusion rules given by an abelian group of order four, and in Example \ref{Example:Ising}, as Ising categories. 

Next, we present some facts about fermionic actions over spin-braided fusion categories of dimension four. For more details can be consulted \cite{galindo2017categorical}.
\begin{proposition}\label{proposition: fermionicactionsonIsing}
Only trivial super-groups  act fermionically on a  spin-braided Ising category. 
\end{proposition}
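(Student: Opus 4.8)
The plan is to reduce the statement to the triviality of the group of spin-braided autoequivalences of an Ising category and then read off the cohomological obstruction from Theorem \ref{theorem:bstruction fermionicaction}. First I would record the elementary structural fact that any monoidal (in particular any braided) autoequivalence of an Ising category $\cB$ fixes each of the three isomorphism classes $\mathbf{1}, f, \sigma$: the unit is always fixed, $f$ is the unique nontrivial invertible object, and $\sigma$ is the unique simple object of Frobenius--Perron dimension $\sqrt{2}$. Consequently every simple object is preserved, the fermion $f$ is automatically preserved, and $\Autb{\cB,f}=\Autb{\cB}$.

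The heart of the argument is to show $\Autb{\cB}$ is trivial. Since an Ising braided fusion category is non-degenerate (Example \ref{Example:Ising}), Theorem \ref{theorem: Pic equivalente Aut} identifies $\autdos{\cB}$ with $\picardos{\cB}$, so it is equivalent to rule out nontrivial ``invisible'' braided autoequivalences beyond the forced fixing of all simples. Here I would argue directly from the Ising data in Example \ref{Example:Ising}: a braided autoequivalence fixing all objects also fixes the associativity constraints $F^\sigma_{\sigma\sigma\sigma}$, the twist $\theta_\sigma$, and the $R$-symbols $R^{\sigma\sigma}_{\mathbf 1}, R^{\sigma\sigma}_{f}$; because $\cB$ is generated as a fusion category by $\sigma$ and all relevant Hom-spaces are one-dimensional, the remaining gauge freedom is pinned down and the functor is monoidally naturally isomorphic to the identity. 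Thus $\Autb{\cB,f}=1$. I expect this to be the main obstacle, since it requires checking that no hidden sign ambiguity survives.

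Granting $\Autb{\cB,f}=1$, any fermionic action $\widetilde{\rho}\colon \underline{G}\to \underline{\Autb{\cB,f}}$ has trivial underlying homomorphism $\rho\colon G\to \Autb{\cB,f}=1$. I would then compute the coefficient module for the obstruction. The universal grading group of an Ising category is $\zdos$, with $\sigma$ in the nontrivial component and $\mathbf 1, f$ in the trivial one; hence the generator of $\widehat{K_0(\cB)}=\operatorname{Aut}_\otimes(\Id_\cB)\cong \zdos$, which acts by $-1$ on $\sigma$ and by $+1$ on $\mathbf 1$ and $f$, restricts to the identity on $\langle f\rangle$. Therefore the restriction homomorphism $r\colon \widehat{K_0(\cB)}\to \widehat{K_0(\langle f\rangle)}\cong \zdos$ is trivial.

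Finally I would feed this into Definition \ref{O3alt} and Theorem \ref{theorem:bstruction fermionicaction}. Taking the constant lifting $\widetilde{\rho}\equiv \Id$ of the trivial homomorphism $\rho$, its restriction to $\langle f\rangle$ is the trivial monoidal functor, so $\theta_{\widetilde{\rho}}=0$. Since $r$ is trivial we are in the first case of Definition \ref{O3alt}, giving $O_3(\rho,\alpha)=\theta_{\widetilde{\rho}}/\alpha=\alpha \in H^2(G,\zdos)$. By Theorem \ref{theorem:bstruction fermionicaction}(b) the homomorphism $\rho$ lifts to a fermionic action of $(G,\alpha)$ if and only if this obstruction vanishes, that is, if and only if $\alpha=0$. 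Hence no super-group with $\alpha\neq 0$ can act, and every fermionic action on a spin-braided Ising category has trivial underlying homomorphism together with vanishing extension class $\alpha=0$; this is exactly the assertion that only trivial super-groups act fermionically.
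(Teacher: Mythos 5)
Your argument is correct and follows the same route as the source of this result (the paper itself states the proposition without proof, deferring to \cite{galindo2017categorical}): reduce to the trivial homomorphism via $\Autb{\cat{I},f}=\Autb{\cat{I}}=1$, note that $r:\widehat{K_0(\cat{I})}\cong\zdos\to\widehat{K_0(\langle f\rangle)}$ is trivial because the generator of $\widehat{K_0(\cat{I})}$ is $+1$ on $f$ and $-1$ only on $\sigma$, and conclude from Definition \ref{O3alt} that $O_3(\rho,\alpha)=\alpha$, which vanishes only for the split super-group. The only step you leave as a sketch, the triviality of $\Autb{\cat{I}}$ for an autoequivalence fixing all simples, is a known computation and your outline of it (pinning down the residual gauge freedom using the $R$-symbols of $\sigma$) is the right one.
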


\begin{theorem}\label{therorem: extensions of A with order 4}
Let $(G,\alpha)$ be a finite super-group and  $\Vc_{A}^{(\omega_k,c_k)}$ a pointed  spin-modular category of dimension four. Then
\begin{enumerate}[leftmargin=*,label=\rm{(\alph*)}]
    \item $\Autb{\Vc_{A}^{(\omega_k,c_k)},f}\cong \mathbb{Z}/2\mathbb{Z}$.
    \item A group homomorphism $G\to\mathbb{Z}/2\mathbb{Z}\cong  \Autb{\Vc_{A}^{(\omega_k,c_k)},f}$  is always realized by a bosonic action.
    \item A group homomorphism $G\to\mathbb{Z}/2\mathbb{Z}\cong  \Autb{\Vc_{A}^{(\omega_k,c_k)},f}$  is realized by a fermionic action of a super-groups $(G,\alpha)$  associated to $\rho$ if and only if $d_2(\alpha)=0$. 
    \item If $d_2(\alpha)=0$ then the equivalence classes of fermionic actions of $(G,\alpha)$ associated to $\rho$ is a torsor over \[\operatorname{Ker}\Big( r_*: H^2(G,A) \to  H^2(G, \zdos) \Big ).\]
\end{enumerate}
Here $d_2: H^2(G,\zdos)\to H^3(G,\zdos)$ is the connecting homomorphism associated to the $G$-module exact sequence $0\to \zdos \to A\overset{r}{\to} \zdos \to 0$.
\end{theorem}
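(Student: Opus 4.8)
The plan is to treat the four claims in turn, using the identification of braided autoequivalences of a pointed braided fusion category with the orthogonal automorphisms of its metric group, the explicit $C_2$-actions recorded in Example \ref{example: action pointed braided categories of foru rank}, and the fermionic obstruction machinery of Theorem \ref{theorem:bstruction fermionicaction}. For (a), I would use that a braided tensor autoequivalence of $\Vc_A^{(\omega_k,c_k)}$ is determined up to isomorphism by an automorphism of $A$ preserving the quadratic form $q$, so that $\Autb{\Vc_A^{(\omega_k,c_k)}}$ is the orthogonal group of $(A,q)$ and $\Autb{\Vc_A^{(\omega_k,c_k)},f}$ is precisely the stabilizer of $f$. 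Since $q(f)=\tfrac12$, any $q$-preserving automorphism fixing $f$ either fixes $v$ or exchanges $v\leftrightarrow v+f$ (the two elements sharing the value $q=k$) in case (a) of Example \ref{example: categorias puntedadas dimension 4}, and sends $v\mapsto\pm v$ in case (b); a short case check over all admissible $k$ then gives stabilizer $\zdos$, generated by the autoequivalence $v\mapsto v+f$, $f\mapsto f$ appearing in Example \ref{example: action pointed braided categories of foru rank}.

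For (b), Example \ref{example: action pointed braided categories of foru rank} already exhibits, via explicit data $\mu,\gamma$, an honest monoidal functor $\underline{C_2}\to\underline{\Autb{\Vc_A^{(\omega_k,c_k)},f}}$ realizing the nontrivial generator found in (a). Given any homomorphism $\rho:G\to\zdos\cong\Autb{\Vc_A^{(\omega_k,c_k)},f}$, I would factor it as $\rho=\iota\circ\bar\rho$ with $\bar\rho:G\to C_2$ and precompose the $C_2$-action with $\underline{\bar\rho}$; this produces a bosonic $G$-action realizing $\rho$. Equivalently, the obstruction $O_3(\rho)$ of Proposition \ref{Proposition:Obstruction-bosonic} vanishes for every $\rho$.

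For (c), I would feed this bosonic lifting into Theorem \ref{theorem:bstruction fermionicaction}. First, the restriction map $r:\widehat{K_0(\cC)}\cong\widehat A\to\widehat{\langle f\rangle}\cong\zdos$ is dual to the injection $\langle f\rangle\hookrightarrow A$, hence surjective, so we are in the branch of Definition \ref{O3alt} where $r$ is non-trivial, with $\operatorname{Ker}(r)\cong\zdos$; under the self-duality $\widehat A\cong A$ this is exactly the $G$-module sequence $0\to\zdos\to A\xrightarrow{r}\zdos\to0$ of the statement. The key computation is $\theta_{\widetilde\rho}$: restricting the pulled-back lifting to $\langle f\rangle\cong\SV$ and applying Lemma \ref{lemaasignacion}, the relevant $2$-cocycle is governed by $\gamma(u,u;f)$, which equals $0$ in both tables of Example \ref{example: action pointed braided categories of foru rank}. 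Thus $\theta_{\widetilde\rho}=0$, so $\theta_{\widetilde\rho}/\alpha=\alpha$ in $H^2(G,\zdos)$ and $O_3(\rho,\alpha)=d_2(\theta_{\widetilde\rho}/\alpha)=d_2(\alpha)$; by Theorem \ref{theorem:bstruction fermionicaction}(b) a fermionic lifting exists iff $d_2(\alpha)=0$. Part (d) is then immediate from Theorem \ref{theorem:bstruction fermionicaction}(c): the $\alpha$-liftings form a torsor over $\operatorname{Ker}(r_*:H^2(G,\widehat{K_0(\cC)})\to H^2(G,\zdos))$, and $\widehat{K_0(\cC)}=\widehat A\cong A$ as $G$-modules turns this into $\operatorname{Ker}(r_*:H^2(G,A)\to H^2(G,\zdos))$.

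The main obstacle I anticipate lies entirely in (c), in two related points. One must check that the self-duality $\widehat A\cong A$ is $G$-equivariant, which holds because $q$-preserving automorphisms preserve the non-degenerate bicharacter $b_q$ and hence the induced isomorphism $A\to\widehat A$. One must also confirm that $\theta_{\widetilde\rho}$ is genuinely computed by the restriction $\gamma(-,-;f)$ through Lemma \ref{lemaasignacion}, so that $\gamma(u,u;f)=0$ forces $\theta_{\widetilde\rho}=0$; the independence of the answer from the chosen lifting is guaranteed by Theorem \ref{theorem:bstruction fermionicaction}(a), which is what lets us substitute the convenient explicit lifting of Example \ref{example: action pointed braided categories of foru rank} into the obstruction.
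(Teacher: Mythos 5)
Your argument is essentially correct, but note that this paper never proves Theorem \ref{therorem: extensions of A with order 4} at all: it is imported verbatim from \cite{galindo2017categorical} (the surrounding text says only ``for more details can be consulted \cite{galindo2017categorical}''), so there is no in-text proof to compare against. Your reconstruction follows the route one would expect that reference to take: (a) via the identification of $\Autb{\Vc_A^{(\omega_k,c_k)}}$ with the orthogonal group of the metric group $(A,q)$ and of $\Autb{\Vc_A^{(\omega_k,c_k)},f}$ with the stabilizer of $f$; (b) by pulling back the explicit $C_2$-action of Example \ref{example: action pointed braided categories of foru rank}; (c) and (d) by feeding that lifting into Definition \ref{O3alt} and Theorem \ref{theorem:bstruction fermionicaction}, using $\gamma(u,u;f)=0$ to get $\theta_{\widetilde{\rho}}=0$ and the $b_q$-induced $G$-equivariant self-duality $A\cong\widehat{A}$ to rewrite the coefficient sequence as $0\to\zdos\to A\overset{r}{\to}\zdos\to 0$. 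Two points deserve slightly more care than you give them: the claim that isomorphism classes of braided autoequivalences of $\Vc_A^{(\omega,c)}$ coincide with $O(A,q)$ (rather than merely surjecting onto it) is a nontrivial standard fact that should be cited, and in the case $k=\tfrac12$ the full orthogonal group is $S_3$, so your case analysis must be phrased, as you indeed do, about the stabilizer of $f$ rather than about $O(A,q)$ itself. With those caveats the proof is complete and correct.
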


\section{\texorpdfstring{Braided $(\widetilde{G},z)$-crossed extensions}%
                               {Braided ({G},z)-crossed extensions}} \label{chapter: BRAIDED (G,z)-CROSSED EXTENSIONS}

In this section, we finish establishing some properties of fermionic actions on fermionic fusion categories that arise under the processes of equivariantization and de-equivariantization. These properties refer exclusively to the classification of a particular type of extensions of fermionic fusion categories, which we call braided $(\widetilde{G},z)$-crossed extensions.

\subsection{\texorpdfstring{Braided $G$-crossed fusion categories}%
{Braided G-crossed fusion categories}} \label{section: Braided G-crossed fusion categories}

\begin{definition}
Let $G$ be a finite group. A $G$-grading for a fusion category $\cC$ is a decomposition
\[ \cC= \bigoplus_{g \in G} \cC_g \]
into a direct sum of full abelian subcategories such that the tensor product defines a functor from $\cC_g\times \cC_h$ into $\cC_h$ for all $g,h \in G$. We assume that the grading is faithful, i.e., $\cC_g \neq 0$ for all $g \in G$.
\end{definition}

\begin{definition}
A $G$-extension of a fusion category $\cD$ is a $G$-graded fusion category $\cC$ such that $\cC_e$ is equivalent to $\cD$.
\end{definition}
\begin{definition}[\cite{turaev5291homotopy}]

A fusion category $\cC$ is called a braided $G$-crossed fusion category if it is equipped with the following data:

\begin{enumerate}[leftmargin=*,label=\rm{(\alph*)}]
\item a grading $\cC= \bigoplus_{g \in G}\cC_g$,
\item an action $\underline{G}\to \underline{\Autr{\cC}}$ of $G$ on $\cC$ such that $g_*(\cC_h)\subset \cC_{ghg^{-1}}$, and
\item a $G$-\emph{braiding}, that is, natural isomorphisms 
\begin{eqnarray*}
c_{X,Y}: X \otimes Y \to g_*(Y)\otimes X, \qquad  X \in \cC_g, g \in G, \text{ and }Y\in \cC.
\end{eqnarray*}
\end{enumerate}

If $\phi_{g,h}: (gh)_* \to g_*h_*$ is the monoidal structure of the functor $g \to g_*$, and $\mu_g$ is the tensor structure of $g_*$, we need to hold some compatibility conditions.
\end{definition}

Note that the trivial component $\cC_e$ is itself a braided fusion category with an action of $G$ by braided autoequivalences of $\cC_e$.

\begin{theorem}[\cite{DGNO}] \label{theorem: equivalencia briaded G-crossed extensions}
	The equi\-va\-riantization and de-equi\-va\-rian\-tization constructions define a bijection betweeen equivalence classes of braided $G$-crossed fusion categories and equivalence classes of braided fusion categories containing $\Rep(G)$ as a symmetric fusion subcategory.
\end{theorem}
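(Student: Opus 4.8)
The plan is to deduce the statement from the general equivariantization/de-equivariantization equivalence of Proposition \ref{proposition: equivalencia equivariantizacion} by matching the extra braided data on each side. Note first that, in the terminology of the excerpt, ``braided fusion category containing $\Rep(G)$ as a symmetric fusion subcategory'' is the same as ``braided fusion category over $\Rep(G)$'', i.e. one equipped with a braided inclusion $\Rep(G)\hookrightarrow\mathcal{Z}_2(\cD)$. Proposition \ref{proposition: equivalencia equivariantizacion} already identifies fusion categories with a $G$-action with fusion categories over $\Rep(G)$, with $\cC\mapsto\cC^G$ and $\cD\mapsto\cD_G$ mutually inverse. Hence it suffices to show that giving a $G$-crossed braiding on the $G$-action side corresponds to giving an ordinary braiding on the over-$\Rep(G)$ side for which the canonical copy of $\Rep(G)$ is symmetric, i.e. lands in the M\"uger center.

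First I would construct the forward map. Given a braided $G$-crossed fusion category $\cC=\bigoplus_{g\in G}\cC_g$, I would define a braiding on $\cC^G$ as follows: for $G$-objects $(V,\tau)$ and $(W,\rho)$ write $V=\bigoplus_g V_g$ according to the grading, and on the homogeneous piece $V_g\in\cC_g$ apply the $G$-crossed braiding $c_{V_g,W}\colon V_g\ot W\to g_*(W)\ot V_g$ followed by $\rho_g\ot\id_{V_g}$ to land in $W\ot V_g$; summing over $g\in G$ yields a morphism $V\ot W\to W\ot V$. The two things to check are that this is a morphism of $G$-objects, using the compatibility of the action with the grading and the equivariant structure maps, and that it satisfies the hexagon axioms, which follow from the $G$-crossed hexagons for $c$ together with the coherence data $\phi,\nu$ of the action; this is precisely the bookkeeping in which the $g_*$-twist of the crossed braiding is absorbed by $\rho_g$. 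I would then check that the canonical inclusion $\Rep(G)\cong\Vc^G\to\cC^G$ is symmetric: its image is concentrated in the trivial grading component, where the crossed braiding reduces to the genuine braiding of $\cC_e$, and double-braids trivially against everything, so it lies in $\mathcal{Z}_2(\cC^G)$.

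Next I would construct the backward map. Given a braided $\cD$ with a symmetric inclusion $\Rep(G)\hookrightarrow\mathcal{Z}_2(\cD)$, form $\cD_G$. By Proposition \ref{proposition: equivalencia equivariantizacion} it carries a $G$-action, and the $\cO(G)$-module structure endows it with a $G$-grading compatible with that action. I would then induce a $G$-crossed braiding on $\cD_G$ from the braiding of $\cD$ and verify the crossed naturality and hexagon axioms. The key point is that the hypothesis $\Rep(G)\subseteq\mathcal{Z}_2(\cD)$, rather than the weaker condition of $\Rep(G)$ being merely Tannakian inside $\mathcal{Z}(\cD)$, is exactly what forces the induced braiding to close up into a $G$-crossed braiding carrying the prescribed $g_*$-twist instead of an ordinary one.

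Finally, since equivariantization and de-equivariantization are already mutually inverse on the underlying $G$-action and over-$\Rep(G)$ structures by Proposition \ref{proposition: equivalencia equivariantizacion}, it remains only to observe that the braided data constructed in the two steps above are carried to one another under these functors, which yields the asserted bijection of equivalence classes. The main obstacle is the compatibility verification in the forward step: translating the $G$-crossed hexagon identities, which involve the autoequivalences $g_*$ and their coherence isomorphisms, into the ordinary hexagon identities for the equivariantized braiding, and dually showing that symmetry of $\Rep(G)$ in $\cD$ is precisely the condition making the de-equivariantized braiding $G$-crossed.
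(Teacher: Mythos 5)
The paper offers no proof of this statement --- it is quoted directly from \cite{DGNO} --- so the only thing to assess is whether your argument would actually establish it, and there is a genuine error at the very first step. You assert that ``braided fusion category containing $\Rep(G)$ as a symmetric fusion subcategory'' means a braided fusion category over $\Rep(G)$, i.e.\ one with a braided inclusion $\Rep(G)\hookrightarrow \mathcal{Z}_2(\cD)$. It does not: it means only that $\cD$ contains a full fusion subcategory braided-equivalent to $\Rep(G)$ with its standard \emph{symmetric} braiding, with no requirement that this subcategory centralize all of $\cD$. The two notions are genuinely different, and the theorem is about the weaker one. A concrete witness is $\cD=\mathcal{Z}(\Vc_G)$: it contains $\Rep(G)$ as a Tannakian subcategory, yet $\mathcal{Z}_2(\mathcal{Z}(\Vc_G))\cong\Vc$, so $\Rep(G)$ is not central; its de-equivariantization is $\Vc_G$ with its tautological faithful $G$-grading, a braided $G$-crossed category that your reading would exclude entirely. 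Relatedly, your check in the forward direction that the image of $\Rep(G)\to\cC^G$ ``double-braids trivially against everything'' is false: for $X\in\Rep(G)$ and $V$ homogeneous of degree $g$, the double braiding in $\cC^G$ involves the action of $g$ on the multiplicity space of $X$ (in the toric-code example $\cC=\Vc_{\Z/2}$, the double braiding of $\epsilon$ with $m$ is $-1$). What is true, and what the theorem needs, is only that the braiding \emph{restricted to} the image of $\Rep(G)$ is symmetric.

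This misreading also inverts the logic of your backward direction. The condition $\Rep(G)\subseteq\mathcal{Z}_2(\cD)$ is precisely what forces $\cD_G$ to be concentrated in the trivial degree and to carry an \emph{ordinary} braiding with a $G$-action --- that is the content of Proposition \ref{proposition de-equivariantization} (and, in the fermionic setting, of Corollary \ref{coro: equivalencia dequivariantizacion- equivariantizacion braided case}), not of the present theorem. The nontrivial graded components $\cD_g$, and hence the genuinely $G$-crossed braiding, come exactly from the part of $\cD$ that fails to centralize $\Rep(G)$; compare Proposition \ref{proposition: componente trivial de-equivariantizacion}\ref{formula componente trivial}, which identifies the trivial component of $\cD_G$ with $C_\cD(\Rep(G))_G$. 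So the statement cannot be reduced to Proposition \ref{proposition de-equivariantization} plus bookkeeping of braidings as you propose: the $G$-grading on $\cD_G$ must be produced from the monodromy of objects of $\cD$ against the Tannakian subcategory (equivalently, from the decomposition of free $\mathcal{O}(G)$-modules under the $G$-action on $\mathcal{O}(G)$), and this is the substantive content of the theorem that your outline omits. Your formula for the braiding on $\cC^G$ in the forward direction, $(\rho_g\otimes\id)\circ c_{V_g,W}$, is the correct standard one; the rest of the argument needs to be rebuilt around the correct right-hand side of the bijection.
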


Theorem \ref{theorem: equivalencia briaded G-crossed extensions} tells us that the de-equivariantizacion by $G$ of a braided fusion category containing $\Rep(G)$ is a braided G-crossed fusion category. The following theorem shows how we can find the trivial component of de-equivariantization and the commutativity relation between taking centralizers and taking de-equivariantization. These results can be found in \cite[Theorem 3.8]{turaev2010homotopy} and \cite[Proposition 4.30]{DGNO}.

\begin{proposition}\label{proposition: componente trivial de-equivariantizacion}
Let $\cB$ be a braided fusion category containing $\Rep(G)$ as a full subcategory, and $\cD$ be the de-equivariantization of $\cB$ by $G$.
\begin{enumerate}[leftmargin=*,label=\rm{(\alph*)}]
    \item\label{formula componente trivial} The trivial component of the braided $G$-crossed fusion category $\cD$ is $C_\cB(\Rep(G))_G$. In particular, if $\mathcal{Z}_2(C_\cB(\Rep(G)))=\Rep(G)$ the braided fusion category $\cD_e$ is modular.
    
    \item \label{isomorphismo lattice}Equivariantization and de-equivariantization define an isomorphism between the lattice of fusion subcategories of $\cB$ containing $\Rep(G)$ and the lattice of $G$-stable fusion subcategories of $\cD$, i.e., if $\widetilde{\cD}$ is a $G$-stable subcategory of $\cD$ then $\widetilde{\cD}^G=\widetilde{\cB}$ contains $\Rep(G)$, and if $\widetilde{\cB}$ is a subcategory of $\cB$ containing  $\Rep(G)$ then $\widetilde{\cB}_G=\widetilde{\cD}$ is a $G$-stable subcategory of $\cD$. 
    \item \label{centralizador y  equivariantizacion}Suppose $\cB$ is a braided fusion category over $\Rep(G)$. The isomorphism of \ref{isomorphismo lattice} commutes with taking centralizer, i.e., $C_\cB(\widetilde{\cD}^G)=(C_\cD(\widetilde{\cD}))^G$ and $C_\cD(\widetilde{B}_G)=(C_\cB(\widetilde{B}))_G$.
\end{enumerate}
\end{proposition}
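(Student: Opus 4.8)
The plan is to treat the three assertions in the order (b), (a), (c), since (b) is purely formal and supplies the lattice correspondence in which the remaining claims are phrased, while the identification of the trivial component in (a) is the genuine content of the proposition.

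For (b), I would invoke Proposition~\ref{proposition: equivalencia equivariantizacion}, which says that equivariantization and de-equivariantization are mutually inverse between (braided) fusion categories with a $G$-action and fusion categories over $\Rep(G)$. A fusion subcategory $\widetilde{\cB}\subseteq\cB$ containing $\Rep(G)$ is itself a category over $\Rep(G)$, so its de-equivariantization $\widetilde{\cB}_G$ is a $G$-equivariant category; concretely it is the full subcategory of $\mathcal{O}(G)$-modules in $\cB$ supported on $\widetilde{\cB}$, which is visibly closed under the $G$-action and hence a $G$-stable subcategory of $\cD$. Conversely a $G$-stable subcategory $\widetilde{\cD}\subseteq\cD$ equivariantizes to $\widetilde{\cD}^{G}\subseteq\cD^{G}=\cB$, and since $\Vc^{G}=\Rep(G)$ this subcategory contains $\Rep(G)$. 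The mutual-inverse property then shows these assignments are inverse bijections, and as both operations preserve inclusions (and rescale Frobenius--Perron dimension by the fixed factor $\FPdim(\widetilde{\cB}_G)=\FPdim(\widetilde{\cB})/|G|$) they constitute an isomorphism of the two lattices.

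For (a), I first recall that by Theorem~\ref{theorem: equivalencia briaded G-crossed extensions} the category $\cD=\cB_G$ carries a canonical braided $G$-crossed structure, in particular a faithful grading $\cD=\bigoplus_{g\in G}\cD_g$. Its trivial component $\cD_e$ is a fusion subcategory, and it is $G$-stable because $g_*(\cD_e)\subseteq\cD_e$; hence by (b) it corresponds to a unique subcategory $\cB'\supseteq\Rep(G)$ with $(\cD_e)^{G}=\cB'$, so the claim reduces to proving $\cB'=C_\cB(\Rep(G))$. Since $\Rep(G)$ is symmetric it centralizes itself, so $\Rep(G)\subseteq C_\cB(\Rep(G))$ and the de-equivariantization $C_\cB(\Rep(G))_G$ is a legitimate object of the right-hand lattice in (b). The key point, and what I expect to be the main obstacle, is the identification of the grading degree of an $\mathcal{O}(G)$-module with its monodromy against $\Rep(G)$: one must unwind the definition of the $G$-braiding on $\cB_G$ and compare it with the half-braidings induced by $\Rep(G)\to\mathcal{Z}(\cB)$, so as to show that an object of $\cB$ lands in $\cD_e$ precisely when its double braiding with every object of $\Rep(G)$ is trivial. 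Once this monodromy computation is in place, the objects that de-equivariantize into $\cD_e$ are exactly those of $C_\cB(\Rep(G))$, giving $(\cD_e)^{G}=C_\cB(\Rep(G))$ and hence $\cD_e=C_\cB(\Rep(G))_G$.

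Finally, for (c) and the modularity clause of (a), I would establish that de-equivariantization commutes with taking centralizers. For (c) this is a double-braiding argument: because the $G$-action is by \emph{braided} autoequivalences, the $G$-crossed double braiding of two equivariant objects is computed from that of their underlying objects, so $Y\in\widetilde{\cB}$ and $Z\in\cB$ centralize one another if and only if their de-equivariantizations do; translating through the bijection of (b) yields $C_\cB(\widetilde{\cD}^{G})=(C_\cD(\widetilde{\cD}))^{G}$ and $C_\cD(\widetilde{\cB}_G)=(C_\cB(\widetilde{\cB}))_G$. The modularity claim then follows by applying (c) inside the braided category over $\Rep(G)$ given by $C_\cB(\Rep(G))$, whose de-equivariantization is $\cD_e$: its M\"uger center de-equivariantizes to the M\"uger center of $\cD_e$, so
\[
\mathcal{Z}_2(\cD_e)=\mathcal{Z}_2\big(C_\cB(\Rep(G))\big)_G=\Rep(G)_G=\Vc,
\]
using the hypothesis $\mathcal{Z}_2(C_\cB(\Rep(G)))=\Rep(G)$ and the defining property $\Rep(G)_G\cong\Vc$. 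Thus $\cD_e$ is non-degenerate, i.e.\ modular.
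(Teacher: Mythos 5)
The paper does not prove this proposition at all: it is imported verbatim from \cite[Theorem 3.8]{turaev2010homotopy} and \cite[Proposition 4.30]{DGNO}, so there is no in-paper argument to compare against. Your outline is consistent with how those references establish the result: part (b) is the formal consequence of Proposition \ref{proposition: equivalencia equivariantizacion} exactly as you describe; part (c) is the double-braiding computation (and you correctly use the extra hypothesis $\Rep(G)\subseteq \mathcal{Z}_2(\cB)$ so that $\cD$ is honestly braided and centralizers in $\cD$ make sense); and deducing the modularity clause of (a) by applying (c) to $\widetilde{\cB}=C_\cB(\Rep(G))$ together with $\Rep(G)_G\cong\Vc$ is clean and correct. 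The one substantive step you name but do not carry out is the identification $\cD_e=C_\cB(\Rep(G))_G$: as written, you assume that the $G$-grading on $\cB_G$ is the ``monodromy grading,'' whereas this is precisely what has to be extracted from the construction. Concretely, in \cite{DGNO} the homogeneous component of degree $g$ consists of the $\mathcal{O}(G)$-modules on which the double braiding with the image of $\Rep(G)$ acts through evaluation at $g$ (read off from the monodromy of free modules $\mathcal{O}(G)\otimes X$ with $\mathcal{O}(G)$), and once that description of the grading is in hand, the degree-$e$ objects are exactly those with trivial monodromy against $\Rep(G)$, i.e.\ the image of $C_\cB(\Rep(G))$. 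So your reduction is the right one, but the proposal as it stands defers the only nontrivial verification; to make it self-contained you would need to spell out that monodromy computation (or simply cite it, as the paper does).
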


\begin{theorem}[{\cite[Theorem 1.3.]{ENO3}}]\label{theorem: extensiones brauer-picard} Graded extensions of a fusion category $\cC$ by a finite group $G$ are parametrized by triples $(c,M,\alpha)$, where $c: G \to \operatorname{BrPic}(\cC)$ is a group homomorphism, $M$ belongs to a certain torsor $T_c^2$ over $H^2(G, \operatorname{Inv}(\mathcal{Z}(\cC))) $ (where $G$ acts on $\operatorname{Inv}(\mathcal{Z}(\cC))$ via $c$), and $\alpha$ belongs to a certain torsor $T^3_{c,M}$ over $H^3(G,\mathbb{C}^\times)$. Here the data $c$, $M$ must satisfy the conditions that certain obstruction $O_3(c)\in H^3(G,\operatorname{Inv}(\mathcal{Z}(\cC)))$ and $O_4(c,M) \in H^4(G,\mathbb{C}^\times) $ vanish.
%\comargen{revisar si define que es una extension graduada}
\end{theorem}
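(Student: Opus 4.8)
The plan is to follow the homotopy-theoretic strategy of \cite{ENO3}: reinterpret $G$-graded extensions of $\cC$ as monoidal $2$-functors into the Brauer--Picard categorical $2$-group, and then classify such functors by obstruction theory along its Postnikov tower.

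First I would establish the categorical dictionary. If $\cC=\bigoplus_{g\in G}\cC_g$ is a faithful $G$-extension with $\cC_e\simeq\cC$, then each homogeneous component $\cC_g$ is naturally a $(\cC,\cC)$-bimodule category, and the ambient tensor product restricts to bimodule functors $\cC_g\boxtimes_\cC\cC_h\xrightarrow{\sim}\cC_{gh}$; faithfulness of the grading together with the fusion structure forces each $\cC_g$ to be an \emph{invertible} bimodule category, and conversely such data reconstruct the graded category. Packaging the coherence of the associativity and unit constraints, one checks that this is exactly the data of a monoidal $2$-functor $\underline{\underline{G}}\to\underline{\underline{\operatorname{BrPic}(\cC)}}$, where $\underline{\underline{G}}$ is $G$ regarded as a discrete categorical $2$-group and $\underline{\underline{\operatorname{BrPic}(\cC)}}$ is the $2$-group whose objects are invertible $\cC$-bimodule categories, whose $1$-morphisms are bimodule equivalences, and whose $2$-morphisms are natural isomorphisms. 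This reduces the theorem to classifying such monoidal $2$-functors up to equivalence.

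Next I would record the homotopy data of the target: $\pi_0(\underline{\underline{\operatorname{BrPic}(\cC)}})=\operatorname{BrPic}(\cC)$, while $\pi_1=\operatorname{Inv}(\mathcal{Z}(\cC))$ (the invertible objects of the center, i.e.\ the automorphisms of the unit bimodule $\cC$) and $\pi_2=\mathbb{C}^\times$ (the automorphisms of the identity). Classifying monoidal $2$-functors $\underline{\underline{G}}\to\underline{\underline{\operatorname{BrPic}(\cC)}}$ is then carried out by obstruction theory along the Postnikov tower of the target, equivalently by analyzing maps $BG\to B\underline{\underline{\operatorname{BrPic}(\cC)}}$. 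A choice of the underlying homomorphism $c:G\to\pi_0=\operatorname{BrPic}(\cC)$ comes first; it determines the $G$-action on $\pi_1$ (and the trivial action on $\pi_2$). The first $k$-invariant yields the obstruction $O_3(c)\in H^3(G,\operatorname{Inv}(\mathcal{Z}(\cC)))$ to promoting $c$ to a functor on the $1$-truncation; when it vanishes, the choices $M$ form a torsor $T^2_c$ over $H^2(G,\operatorname{Inv}(\mathcal{Z}(\cC)))$. Given $M$, the next $k$-invariant yields $O_4(c,M)\in H^4(G,\mathbb{C}^\times)$, the obstruction to lifting to the full $2$-group; when it vanishes the remaining coherence data $\alpha$ form a torsor $T^3_{c,M}$ over $H^3(G,\mathbb{C}^\times)$. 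Assembling the three layers recovers the parametrization by triples $(c,M,\alpha)$.

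The \textbf{main obstacle} is the first step: making the dictionary between graded extensions and monoidal $2$-functors into $\underline{\underline{\operatorname{BrPic}(\cC)}}$ fully precise. The relative tensor product $\boxtimes_\cC$ is only defined up to equivalence, so all of the higher associativity and unit constraints must be tracked coherently, and one must verify that invertibility of the bimodule categories is equivalent to nondegeneracy of the grading data. After that, identifying the Postnikov $k$-invariants with the explicit classes $O_3(c)$ and $O_4(c,M)$ demands the concrete cocycle and string-diagram computations of \cite{ENO3}; the cohomological bookkeeping — that the successive lifts form torsors over $H^2(G,\operatorname{Inv}(\mathcal{Z}(\cC)))$ and $H^3(G,\mathbb{C}^\times)$ — is then the standard obstruction theory for mapping into a three-stage Postnikov tower.
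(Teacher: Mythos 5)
This statement is quoted from \cite{ENO3} and the paper gives no proof of its own, so the only meaningful comparison is with the source: your outline — recasting a faithful $G$-extension as a monoidal $2$-functor $\underline{\underline{G}}\to\underline{\underline{\operatorname{BrPic}(\cC)}}$, reading off $\pi_1=\operatorname{Inv}(\mathcal{Z}(\cC))$ and $\pi_2=\mathbb{C}^\times$, and classifying such functors by obstruction theory along the Postnikov tower — is exactly the strategy of \cite{ENO3}, and you correctly identify where the real work lies (coherence of $\boxtimes_\cC$ and matching the $k$-invariants with the explicit classes $O_3(c)$, $O_4(c,M)$). No gap to report at the level of a proof sketch.
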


Theorem \ref{theorem: extensiones brauer-picard} establishes the basis for the classification of all extensions of fusion categories, but we are interested in  particular  braided $G$-crossed extensions that we presente below . For those extensions Theorem \ref{theorem: extensiones picard} is more relevant.

\begin{theorem}[{\cite[Theorem 7.12]{ENO3}}]\label{theorem: extensiones picard} Let $\cB$ be a braided fusion category. Equivalence classes of braided $G$-crossed extensions of $\cB$ (with faithful $G$-grading) are in bijection  with morphisms of categorical 2-groups $\underline{\underline{G}} \to \underline{\underline{\operatorname{Pic}(\cB)}}$.

\end{theorem}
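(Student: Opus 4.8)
The plan is to deduce this statement from the general graded-extension theorem (Theorem \ref{theorem: extensiones brauer-picard}) by identifying, among all $G$-extensions of $\cB$, exactly those that admit a braided $G$-crossed structure, and by showing that this extra structure is precisely the data of a factorization of the classifying map through the Picard $2$-group. Recall that Theorem \ref{theorem: extensiones brauer-picard} classifies a $G$-extension $\cC=\bigoplus_{g\in G}\cC_g$ of $\cB=\cC_e$ by its Brauer--Picard data, equivalently by a categorical $2$-group homomorphism $\underline{\underline{G}}\to\underline{\underline{\operatorname{BrPic}(\cB)}}$, under which each graded component $\cC_g$ is an invertible $(\cB,\cB)$-bimodule category and the grading multiplication supplies the coherence equivalences $\cC_g\boxtimes_\cB\cC_h\simeq\cC_{gh}$. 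The entire argument then reduces to translating the braided $G$-crossed data into the statement that this map factors through the canonical homomorphism $\picardtres{\cB}\to\underline{\underline{\operatorname{BrPic}(\cB)}}$ obtained by promoting a left $\cB$-module to a bimodule via the braiding.

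I would first treat the forward direction. Starting from a braided $G$-crossed extension, each component $\cC_g$ is already an invertible $\cB$-bimodule coming from the underlying $G$-extension. The defining $G$-braiding $c_{X,Y}\colon X\otimes Y\to g_*(Y)\otimes X$, for $X\in\cC_g$ and $Y\in\cB$, is exactly a natural isomorphism identifying the right $\cB$-action on $\cC_g$ with the left action twisted by the braided autoequivalence $g_*$. This says precisely that $\cC_g$ is a central bimodule lying in the image of $\picardtres{\cB}\to\underline{\underline{\operatorname{BrPic}(\cB)}}$ and recovered from an invertible left $\cB$-module. Transporting the grading multiplication and the associativity constraint of $\cC$ through this identification yields the monoidal structure and associator of a $2$-group homomorphism $\underline{\underline{G}}\to\picardtres{\cB}$.

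For the converse I would start from a homomorphism $F\colon\underline{\underline{G}}\to\picardtres{\cB}$, set $\cC_g:=F(g)$ and $\cC:=\bigoplus_{g\in G}\cC_g$, and equip $\cC$ with the tensor product coming from the coherence equivalences $\cC_g\boxtimes_\cB\cC_h\simeq\cC_{gh}$. The $G$-action and the $G$-braiding are then not additional choices: each invertible left $\cB$-module $\cC_g$ carries the two alpha-induction functors $\alpha^{\pm}_{\cC_g}$, related by $\alpha^+=\alpha^-\circ\theta_{\cC_g}$ with $\theta_{\cC_g}\in\Autb{\cB}$. I would take $g_*:=\theta_{\cC_g}$ as the action and read off $c_{X,Y}$ from the comparison of $\alpha^+_{\cC_g}$ and $\alpha^-_{\cC_g}$, which by \eqref{eq: mult izq}--\eqref{eq: mult der} is exactly the prescription $X\otimes Y\mapsto g_*(Y)\otimes X$. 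Finally, a braided $G$-crossed equivalence restricts to an equivalence of the underlying $G$-extensions preserving the crossed braiding; under Theorem \ref{theorem: extensiones brauer-picard} this matches a homotopy of the corresponding maps into $\picardtres{\cB}$, and conversely every such homotopy integrates to a braided $G$-crossed equivalence, giving the claimed bijection on equivalence classes.

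The hard part will be the coherence bookkeeping in the converse. One must verify that the pentagon for the associator of $F$ reproduces simultaneously the associativity constraint of $\cC$ and the full list of compatibility axioms of a braided $G$-crossed category relating the grading, the tensor structures $\mu_g$, the monoidal isomorphisms $\phi_{g,h}$ of the action, and the $G$-braiding $c$. Equivalently, one must confirm that the braiding extracted from alpha-induction satisfies the $G$-crossed hexagons; this is the substantial diagram chase, since it is exactly where the purely module-theoretic packaging of the $2$-group homomorphism must be shown to encode the braided-categorical axioms, and where \eqref{eq: mult izq}--\eqref{eq: mult der} together with the cocycle-type coherence of $F$ do the essential work.
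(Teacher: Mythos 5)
The paper does not actually prove this statement: it is quoted verbatim from \cite[Theorem 7.12]{ENO3}, so there is no internal argument to compare yours against. Your sketch reconstructs the strategy of the original ENO3 proof --- restrict the Brauer--Picard classification of graded extensions (Theorem \ref{theorem: extensiones brauer-picard}) along the canonical map $\picardtres{\cB}\to\uu{\operatorname{BrPic}(\cB)}$, use the $G$-braiding to show each component $\cC_g$ is a one-sided invertible module, and recover $g_*=\theta_{\cC_g}$ from alpha-induction --- and that is the right approach. A minor point in your favor that you should make explicit: $\theta_{\cM}$ is defined for any invertible module over any braided $\cB$ (the functors $\alpha^{\pm}$ are equivalences whenever $\cM$ is invertible), so you are not secretly using the non-degeneracy hypothesis of Theorem \ref{theorem: Pic equivalente Aut}, which matters because the theorem is stated for arbitrary braided $\cB$.

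There is, however, one substantive under-specification in your converse direction. A braided $G$-crossed structure requires an action of $G$ on \emph{all} of $\cC$ satisfying $g_*(\cC_h)\subseteq\cC_{ghg^{-1}}$, together with a $G$-braiding $c_{X,Y}$ for arbitrary $Y\in\cC$; but setting $g_*:=\theta_{\cC_g}$ and reading $c_{X,Y}$ off the comparison of $\alpha^{+}_{\cC_g}$ with $\alpha^{-}_{\cC_g}$ only produces the action on the trivial component and the braiding for $Y\in\cB=\cC_e$. Extending the action and the braiding to the non-trivial components, and verifying the crossed hexagons there, is exactly where the remaining content of the ENO3 argument lives; it is fillable, but it is a missing construction rather than mere ``coherence bookkeeping'' for data you have already exhibited. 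As written, your converse builds a $G$-extension whose trivial component carries an action of $G$ by braided autoequivalences, which is strictly less than a braided $G$-crossed category.
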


\subsection{\texorpdfstring{$H^4$-Obstruction}%
{H4-Obstruction}} \label{section: h4 obstruction}
 The obstruction that we will study next, called $H^4$-obstruction, measures when a bosonic action on a non-degenerate fusion category  $\widetilde{\rho}: \underline{G} \to \underline{\Autb{\cB}}$ can be lifted to a 2-group homomorphism $\widetilde{\widetilde{\rho}}: \underline{\underline{G}} \to \underline{\underline{\operatorname{Pic}(\cB)}}$. According to Theorem \ref{theorem: Pic equivalente Aut}, we are identifying ${\underline{\Autb{\cB}}}$ with $\underline{\operatorname{Pic}(\cB)}$.

We want to construct a 2-homomorphism from $\underline{\underline{G}}$ to $\underline{\underline{\operatorname{Pic}(\cB)}}$ for a non-degenerate braided fusion category. We can characterize this 2-homomorphisms in cohomology terms. Firtly, Computing the $O_3$-obstruction to know if a group homomorphism $G \to \Autb{\cB}$ admits a lifting $\widetilde{\rho}$, and secondly, computing  the $H^4$-obstruction to determine if  it has a lifting to a 2-homomrphism. Since 2-homomorphisms classify braided $G$-crossed extensions of $\cB$, the process gives us an algorithm to find these braided $G$-crossed extensions. That is one of our goals, specially extensions with specific properties that we will discuss later.

Suppose a non-degenerate braided fusion category $\cb$, a bosonic action $\widetilde{\rho}$ is determined by data $\widetilde{\rho}:=(g_*,\psi^g,\varphi_{g,h}): \underline{G} \to \underline{\Autb{\cb}}$, and the set of equivalence classes of bosonic actions is a torsor over $H^2_\rho(G,\operatorname{Inv}(\cb))$. If we suppose that $\widetilde{\rho}$ admits a lifiting $\widetilde{\widetilde{\rho}}:\underline{\underline{G}} \to \uu{\Pic{\cB}}$, a 2-cocycle $\mu \in Z^2_\rho(G,\operatorname{Inv}(\cb))$  has a bosonic action denoted by $\mu \triangleright \widetilde{\rho}$.

The \emph{$H^4$-obstruction} of the pair $(\widetilde{\rho},\mu)$ is defined as a 4-cocycle $O_4(\widetilde{\rho},\mu) \in H^4(G,\mathbb{C}^\times)$ described by the formula

\begin{eqnarray}\label{equation: h4-obstruction}
            O_4(\widetilde{\rho},\mu) &=& c_{\mu_{g_1,g_2},(g_1g_2)_*(\mu_{g_3,g_4})}\\
            & & a_{(g_1g_2)_*(\mu_{g_3,g_4}),\mu_{g_1,g_2},\mu_{g_1g_2,g_3g_4}} \nonumber \\
            & & a^{-1}_{(g_1g_2)_*(\mu_{g_3,g_4}),(g_1)_*(\mu_{g_2,g_3g_4}),\mu_{g_1,g_2g_3g_4}} \nonumber \\
            & & a_{(g_1)_*(\mu_{g_2,g_3}),(g_1)_*(\mu_{g_2g_3,g_4}), \mu_{g_1g_2g_3,g_4}} \nonumber \\
            & & a^{-1}_{(g_1)_*(\mu_{g_2,g_3}),\mu_{g_1,g_2g_3},\mu_{g_1g_2g_3,g_4}} \nonumber \\
            & & a_{\mu_{g_1,g_2}, \mu_{g_1g_2,g_3},\mu_{g_1g_2g_3,g_4}} \nonumber \\
            & & a^{-1}_{\mu_{g_1,g_2},(g_1g_2)_*(\mu_{g_3,g_4}),\mu_{g_1g_2,g_3g_4}} \nonumber \\
            & & \varphi_{g_1,g_2}(\mu_{g_3,g_4}) \nonumber \\
            & & (\psi^{g_1})^{-1}((g_2)_*(\mu_{g_3,g_4}),\mu_{g_2,g_3g_4}) \nonumber \\
            & & \psi^{g_1}(\mu_{g_2,g_3},\mu_{g_2g_3,g_4}) ,\nonumber 
\end{eqnarray}
where $a$ is the associative constraint of the category $\cB$.
\begin{proposition}[{\cite[Proposition 9]{SCJZ}}] \label{proposition: O4-obstruction}
If $\cb$ is a non-degenerate braided fusion category, the homomorphism of categorical groups $(\mu \triangleright \widetilde{\rho}): \underline{G} \longrightarrow \underline{\operatorname{Pic}(\cb)}$ can be lifted if and only if $O_4(\widetilde{\rho},\mu)$ defined by (\ref{equation: h4-obstruction}) is trivial.
\end{proposition}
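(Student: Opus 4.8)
The plan is to realize this statement as the topmost layer of the Brauer--Picard obstruction theory of Theorem~\ref{theorem: extensiones brauer-picard} and Theorem~\ref{theorem: extensiones picard}, specialized to the braided setting through the identification $\picardos{\cB}\cong\underline{\Autb{\cB}}$ of Theorem~\ref{theorem: Pic equivalente Aut} (which is exactly where non-degeneracy of $\cB$ is used). Recall that a $2$-homomorphism $\uu{G}\to\picardtres{\cB}$ unpacks into layered data: a group homomorphism $c\colon G\to\operatorname{Pic}(\cB)$ on $\pi_0$, a choice of $1$-morphisms $\alpha_{g,h}\colon\cM_g\boxtimes_\cB\cM_h\xrightarrow{\sim}\cM_{gh}$ recorded up to isomorphism by a torsor over $H^2(G,\Inv(\cB))$, and finally $2$-morphisms relating the two composites for each triple, subject to a pentagon coherence whose defect is a normalized $4$-cochain valued in the automorphisms of the unit, i.e.\ in $\mathbb{C}^\times$. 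In our situation the bottom homomorphism $c$ is $\rho$, the existence of the bosonic action $\widetilde{\rho}=(g_*,\psi^g,\varphi_{g,h})$ already witnesses the vanishing of the degree-$3$ obstruction and fixes the $1$-morphism layer, and twisting by $\mu\in Z^2_\rho(G,\Inv(\cB))$ moves this choice within its torsor to give $\mu\triangleright\widetilde{\rho}$. Thus a lift of $\mu\triangleright\widetilde{\rho}$ to $\picardtres{\cB}$ exists exactly when the residual pentagon $4$-cochain can be trivialized in $H^4(G,\mathbb{C}^\times)$, and this residual cochain is what I must identify with $O_4(\widetilde{\rho},\mu)$.

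First I would make the $1$-morphisms explicit through $\alpha$-induction. Under Theorem~\ref{theorem: Pic equivalente Aut} the invertible module $\cM_g$ corresponds to the braided autoequivalence $g_*$, and the tensor structure of $\mu\triangleright\widetilde{\rho}$ is governed by $\psi^g$, $\varphi_{g,h}$, and the invertible objects $\mu_{g,h}\in\Inv(\cB)$ coming from the $2$-cocycle. The key feature, visible already in the tensor structure~\eqref{eq: mult izq} of $\alpha^{+}$, is that the braiding $c$ of $\cB$ is built into these $1$-morphisms. Composing them in the two ways associated with a quadruple $(g_1,g_2,g_3,g_4)$ produces two isomorphisms between the same pair of module equivalences; their ratio is an automorphism of the unit, hence a scalar in $\mathbb{C}^\times$. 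I would compute this ratio by a single diagram chase that threads one composite to the other, inserting the associativity constraints $a^{\pm1}$ of $\cB$ to re-bracket the objects $\mu_{g_i,g_j}$, the braiding $c$ to move $\mu_{g_1,g_2}$ past $(g_1g_2)_*(\mu_{g_3,g_4})$, and the structural isomorphisms $\varphi_{g_1,g_2}$ and $\psi^{g_1}$ to reconcile the functor-level and object-level data. Collecting the contributions in the order dictated by the chase is designed to reproduce the nine factors of formula~\eqref{equation: h4-obstruction}.

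It then remains to show that $O_4(\widetilde{\rho},\mu)$ is a normalized $4$-cocycle and that its class is the precise obstruction. Cocyclicity I would obtain by comparing the two ways of associating a $5$-fold composite and invoking the pentagon and hexagon axioms of $\cB$ together with the monoidal-functor axioms satisfied by $\psi$ and $\varphi$; everything that is not already a coboundary cancels, leaving the $4$-cocycle condition. For the equivalence, one direction is immediate: a genuine lift furnishes the $2$-morphisms whose pentagon defect is by construction $O_4(\widetilde{\rho},\mu)$, so the obstruction is a coboundary. Conversely, if $[O_4(\widetilde{\rho},\mu)]=0$ a primitive $3$-cochain rescales the candidate $2$-morphisms into ones satisfying the pentagon, yielding the desired $2$-homomorphism; independence of the choice of representative action and of the $1$-morphisms within their isomorphism classes alters $O_4$ only by a coboundary, exactly as in the argument behind Proposition~\ref{Proposition:Obstruction-bosonic}. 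The main obstacle is the bookkeeping in the central diagram chase: one must track the interplay of the braided structure of $\cB$, the functor structures $\psi,\varphi$, and the re-bracketings of the $\mu_{g_i,g_j}$ so that the nine factors of~\eqref{equation: h4-obstruction} emerge with the correct exponents and in the correct cohomological degree. Verifying that the braiding enters exactly once, in the single factor $c_{\mu_{g_1,g_2},(g_1g_2)_*(\mu_{g_3,g_4})}$ forced by~\eqref{eq: mult izq}, is the delicate point distinguishing this braided computation from the general Brauer--Picard one.
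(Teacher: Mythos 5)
The paper offers no proof of this proposition: it is imported verbatim from \cite[Proposition 9]{SCJZ}, so there is no internal argument to compare yours against. Your outline --- unpacking a $2$-group homomorphism $\uu{G}\to\picardtres{\cB}$ into its $\pi_0$, $1$-morphism, and $2$-morphism layers, identifying the first two layers with the monoidal functor $\mu\triangleright\widetilde{\rho}$ via the equivalence of Theorem \ref{theorem: Pic equivalente Aut}, and locating the obstruction as the pentagon defect of the $2$-morphism layer, with the two directions of the ``if and only if'' handled by the standard coboundary argument --- is exactly the strategy of the cited source and of the underlying extension theory in \cite{ENO3}. The one reservation is that the entire content of the cited result is the explicit formula \eqref{equation: h4-obstruction}: the diagram chase that is supposed to produce its nine factors (including the single braiding insertion you rightly flag as the delicate point) and the verification of the $4$-cocycle condition are announced rather than executed, so what you have is a sound plan whose substantive computations remain to be done.
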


More details about the $H^4$-obstruction used in this paper can be consulted in \cite{ENO3} and \cite{SCJZ}.

\subsection{\texorpdfstring{Braided $(\widetilde{G},z)$-crossed extensions}%
                               {Braided ({G},z)-crossed extensions}} \label{section: braided super-extensions}
{

Of all braided $G$-crossed extensions of a braided fusion category $\cB$, we are interested in studying those extensions with a fermionic action of $(\widetilde{G},z)$ over  the trivial component. The idea of this part is to classify this ``fermionic'' extensions in terms of 2-homomorphisms as happens in Theorem \ref{theorem: extensiones picard}.

	\begin{definition}
		A \emph{braided $(\widetilde{G},z)$-crossed fusion category} $(\cD,f)$ is a braided $G$-crossed fusion category $\cD$ where $(\cD_e,f)$ is a spin-braided fusion category in the sense of Definition \ref{definition: spin y fermionic categories}, and the action that corresponds to the structure of braided $G$-crossed category is a fermionic action of $(\widetilde{G},z)$ on $(\cD_e,f)$.
	\end{definition}
	
	In this case, a functor between braided $(\widetilde{G},z)$-crossed fusion categories is a functor between braided $G$-crossed fusion categories which is also a fermionic functor. 
	
	The next corollary  is the fermionic version of Theorem \ref{theorem: equivalencia briaded G-crossed extensions}. This one gives a bijection between braided fusion categories over $\Rep(\widetilde{G},z)$ and braided $(\widetilde{G},z)$-crossed fusion categories.
	
	\begin{corollary}\label{col: equivalence fermionic crossed}
		Let $(\widetilde{G},z)$ be a finite super-group. Equivariantization and de-equivariantization define a bijection between braided $(\widetilde{G},z)$-crossed fusion categories $(\cD,f)$, up to equivalence, and braided fusion categories $\cC$ over $\Rep(\widetilde{G},z)$, up to equivalence.
	\end{corollary}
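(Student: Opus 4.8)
The plan is to reduce the statement to the non-fermionic correspondence of Theorem~\ref{theorem: equivalencia briaded G-crossed extensions} and to feed in the super-structure through Corollary~\ref{coro: equivalencia dequivariantizacion- equivariantizacion braided case} applied to the trivial component. Throughout I read ``$\cC$ over $\Rep(\widetilde{G},z)$'' as ``$\cC$ contains $\Rep(\widetilde{G},z)$ as a symmetric fusion subcategory'', so that the claim is literally the fermionic analogue of Theorem~\ref{theorem: equivalencia briaded G-crossed extensions}.

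First I would treat the forward (equivariantization) direction. Let $(\cD,f)$ be a braided $(\widetilde{G},z)$-crossed fusion category. Forgetting $f$, $\cD$ is an ordinary braided $G$-crossed fusion category, so by Theorem~\ref{theorem: equivalencia briaded G-crossed extensions} its equivariantization $\cD^G$ is a braided fusion category containing $\Rep(G)$ as a symmetric fusion subcategory. It remains to upgrade $\Rep(G)$ to $\Rep(\widetilde{G},z)$. By definition $(\cD_e,f)$ is spin-braided and the $G$-crossed action restricts on $\cD_e$ to a fermionic action of $(\widetilde{G},z)$ compatible with the braiding; hence Corollary~\ref{coro: equivalencia dequivariantizacion- equivariantizacion braided case} applies and $(\cD_e)^G$ is a braided fusion category over $\Rep(\widetilde{G},z)$ with $\Rep(G)\subseteq \mathcal{Z}_2((\cD_e)^G)$. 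Reading Proposition~\ref{proposition: componente trivial de-equivariantizacion}\ref{formula componente trivial} in reverse (with $\cB=\cD^G$, so that $\cD=(\cD^G)_G$) identifies $(\cD_e)^G = C_{\cD^G}(\Rep(G))$. Thus $\Rep(\widetilde{G},z)\subseteq (\cD_e)^G\subseteq \cD^G$, and since $\Rep(\widetilde{G},z)$ is symmetric this exhibits $\cD^G$ as a braided fusion category over $\Rep(\widetilde{G},z)$.

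Next I would treat the backward (de-equivariantization) direction. Let $\cC$ be braided, containing $\Rep(\widetilde{G},z)$ as a symmetric subcategory. Then $\Rep(G)\subseteq \Rep(\widetilde{G},z)\subseteq \cC$ is symmetric, so Theorem~\ref{theorem: equivalencia briaded G-crossed extensions} makes $\cC_G$ a braided $G$-crossed fusion category with $(\cC_G)_e = C_\cC(\Rep(G))_G$ by Proposition~\ref{proposition: componente trivial de-equivariantizacion}\ref{formula componente trivial}. Because $\Rep(\widetilde{G},z)$ is symmetric it centralizes $\Rep(G)$, so $\Rep(\widetilde{G},z)\subseteq C_\cC(\Rep(G))$; moreover $\Rep(G)$ centralizes all of $C_\cC(\Rep(G))$ by definition of the centralizer, which gives $\Rep(G)\subseteq \mathcal{Z}_2(C_\cC(\Rep(G)))$. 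Hence $C_\cC(\Rep(G))$ satisfies the hypotheses of Corollary~\ref{coro: equivalencia dequivariantizacion- equivariantizacion braided case}, and its de-equivariantization $(\cC_G)_e$ is a spin-braided fusion category equipped with a fermionic action of $(\widetilde{G},z)$ compatible with the braiding; here the fermion $f$ is the nontrivial object of the copy $\SV\cong\Rep(\widetilde{G},z)_G$ sitting inside $(\cC_G)_e$. This is precisely a braided $(\widetilde{G},z)$-crossed structure on $\cC_G$.

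Finally, the two assignments are mutually inverse because equivariantization and de-equivariantization already are (Proposition~\ref{proposition: equivalencia equivariantizacion}), and both constructions above are built from these together with the identification $(\cC_G)_e = C_\cC(\Rep(G))_G$, whose compatibility with the $G$-action is guaranteed by the lattice isomorphism of Proposition~\ref{proposition: componente trivial de-equivariantizacion}\ref{isomorphismo lattice}. The main obstacle I anticipate is exactly this matching step in the backward direction: one must check that the $G$-action on the trivial component coming from the braided $G$-crossed structure of $\cC_G$ coincides with the fermionic action produced by Corollary~\ref{coro: equivalencia dequivariantizacion- equivariantizacion braided case}, and that the chosen fermion $f$ corresponds under equivariantization to the odd (super) part of $\Rep(\widetilde{G},z)$ with the correct extension class $\alpha\in H^2(G,\zdos)$, i.e.\ $\theta_{\widetilde{\rho}}\cong\alpha$. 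Once this compatibility of the fermion and of the two group actions is verified, the bijection follows.
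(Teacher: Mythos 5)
Your proposal is correct and takes essentially the same route as the paper: both reduce to the bosonic correspondence of Theorem~\ref{theorem: equivalencia briaded G-crossed extensions} and then invoke the fermionic (de-)equivariantization results to place the fermion and the fermionic action on the trivial component. Your write-up is considerably more detailed (the paper cites Theorem~\ref{theorem: equivalencia dequivariantizacion- equivariantizacion} directly and disposes of the forward direction in a single line, without the centralizer identification or the compatibility check you flag at the end), but the underlying argument is the same.
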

	
	\begin{proof}
		There is a bijection between braided $G$-crossed fusion categories $\cD$ and braided fusion categories over $\Rep(G)$ by Theorem \ref{theorem: extensiones picard}. 
		
		If $\cC$ is a braided fusion category over $\Rep(\widetilde{G},z)$, it is a braided fusion category over  $\Rep(G)$, so $\cD:=\cC_G$ is a braided $G$-crossed fusion category, and $\cD$ is a fermionic fusion category  by Theorem \ref{theorem: equivalencia dequivariantizacion- equivariantizacion}.
		
		Finally, as $\Rep(\widetilde{G},z) \subseteq \cat{Z}_2(\Rep(G))$ then $\SV \subseteq \cD_e$, and the fermion in $\cD$ belongs to its trivial component.
		
		Conversely, if $(\cD,f)$ is a braided $(\widetilde{G},z)$-crossed fusion category, the equivariantization $\cC=\cD^G$ is a braided fusion category such that $\Rep(\widetilde{G},z)=\SV^G\subseteq \cC$.
	\end{proof}
	
	\begin{definition}
	A \emph{braided $(\widetilde{G},z)$-crossed extension} of a spin-braided fusion category $(\cB,f)$ is a braided $(\widetilde{G},z)$-crossed fusion category $(\cD,f)$ whose trivial component $(\cD_e,f)$ is equivalent to $(\cB,f)$.
	\end{definition}
	
	{
		\begin{definition}\label{definition: pic(b,f)}
			If $(\cb,f)$ is a spin-braided fusion category, we consider $\picardos{\cb,f} \subseteq \picardos{\cb}$ as the full subcategory with objects $\cM\in \Pic{\cB}$ such that $\theta_\cM(f)=f$.
		\end{definition}
		
		The condition $\theta_\cM(f)=f$ for an invertible module category $\cM$ is equivalent to say that the module functors $ - \otimes f $ and $f \otimes - $ are isomorphic autoequivalences of $\cM$. In fact, this is a consequence of the definition of the functors $\alpha^{\pm}$ in (\ref{eq: mult izq}), (\ref{eq: mult der}) and $\theta$. More details can be found in \cite{davydov2013picard}.
	}
	
	\begin{proposition}\label{prop:equivalence fermionic functors}
		Consider a non-degenerate spin-braided fusion category $(\cb,f)$. There is an equivalence between $\underline{\Autb{\cB,f}}$ and $\underline{\operatorname{Pic}(\cb,f)}$.
	\end{proposition}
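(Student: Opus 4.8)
The plan is to obtain the desired equivalence as a restriction of the equivalence $\cM \mapsto \theta_\cM$ furnished by Theorem \ref{theorem: Pic equivalente Aut}. Write $\Phi\colon \underline{\operatorname{Pic}(\cB)} \to \underline{\Autb{\cB}}$ for that functor; since $\cB$ is non-degenerate, $\Phi$ is an equivalence of categorical groups. Both $\underline{\operatorname{Pic}(\cb,f)}$ and $\underline{\Autb{\cB,f}}$ are, by construction, \emph{full} monoidal subcategories of $\underline{\operatorname{Pic}(\cB)}$ and $\underline{\Autb{\cB}}$ respectively, each singled out by a condition imposed only on objects. Consequently it suffices to verify three things: that $\Phi$ carries the objects of the first subcategory into the second, that this assignment is essentially surjective, and that it respects the monoidal structures.

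First I would match the two object conditions. By Definition \ref{definition: pic(b,f)}, an object $\cM \in \Pic{\cB}$ belongs to $\underline{\operatorname{Pic}(\cb,f)}$ exactly when $\theta_\cM(f) \cong f$, that is, when $\Phi(\cM)=\theta_\cM$ preserves the fermion. By Example \ref{Example: spin-braided functors}, the objects of $\underline{\Autb{\cB,f}}$ are precisely the braided autoequivalences $F$ with $F(f)\cong f$, since on a spin-braided category such an $F$ is automatically fermionic. Hence $\cM$ lies in $\underline{\operatorname{Pic}(\cb,f)}$ if and only if $\Phi(\cM)$ lies in $\underline{\Autb{\cB,f}}$: the defining condition on the Picard side is literally the pullback along $\Phi$ of the defining condition on the autoequivalence side. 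It follows that the restriction $\Phi\colon \underline{\operatorname{Pic}(\cb,f)} \to \underline{\Autb{\cB,f}}$ is well defined; it is fully faithful as the restriction of a fully faithful functor to full subcategories, and it is essentially surjective because any $F\in \underline{\Autb{\cB,f}}$ admits a preimage $\cM$ under $\Phi$ with $\theta_\cM\cong F$, whence $\theta_\cM(f)\cong F(f)\cong f$ and $\cM\in \underline{\operatorname{Pic}(\cb,f)}$.

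It then remains to see that this equivalence of underlying categories is monoidal. Here the only point is that both subcategories are closed under their products and inverses, which is immediate from the fermion-preservation condition: if $\theta_\cM(f)\cong f$ and $\theta_\cN(f)\cong f$ then, using that $\Phi$ is monoidal (so $\theta_{\cM\boxtimes_\cB\cN}$ is the composite of $\theta_\cM$ and $\theta_\cN$), one gets $\theta_{\cM\boxtimes_\cB\cN}(f)\cong f$; symmetrically $F(f)\cong f$ and $G(f)\cong f$ give $(F\circ G)(f)\cong f$, and $\theta_\cM(f)\cong f$ forces $\theta_\cM^{-1}(f)\cong f$. Since $\Phi$ was already a monoidal equivalence, its restriction inherits this structure and yields the stated equivalence. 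I expect no genuine obstacle in this argument: the whole content is the observation that the two subcategories are cut out by the \emph{same} condition transported across the equivalence of Theorem \ref{theorem: Pic equivalente Aut}, the only place where the spin-braided hypothesis intervenes being the identification, via Example \ref{Example: spin-braided functors}, of ``$F(f)\cong f$'' with ``$F$ fermionic.''
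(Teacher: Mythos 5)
Your proposal is correct and follows the same route as the paper: restricting the equivalence $\cM\mapsto\theta_\cM$ of Theorem \ref{theorem: Pic equivalente Aut} and observing that the defining condition of $\underline{\operatorname{Pic}(\cb,f)}$ (namely $\theta_\cM(f)=f$) is exactly the pullback of the defining condition of $\underline{\Autb{\cB,f}}$. The paper's own proof is a one-line version of this; your additional checks of essential surjectivity and closure under the monoidal product are sound and simply make explicit what the paper leaves implicit.
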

	
	{
	\begin{proof}
	This proposition is a direct consequence of Definition \ref{definition: pic(b,f)}. In fact, $\theta: \underline{\Pic{\cB}} \to \underline{\Autb{\cB}}$ is an equivalence of categories. In particular, for each $\cM \in \underline{\operatorname{Pic}(\cB,f)}$, we have $\theta_\cM(f)=f$ by definition, so $\theta_{\cM} \in \underline{\Autb{\cB,f}}$.
	\end{proof}
	}

	{
	Theorem \ref{teo: fermionic crossed to 3- homomorphisms} classifies braided $(\widetilde{G},z)$-crossed extensions of spin-braided fusion categories in terms of homomorphisms of 2-groups from $\underline{\underline{G}}$ to $\underline{\underline{\operatorname{Pic}(\cB,f)}}$. This theorem is the key result that we use to find minimal modular extensions for super-Tannakian categories, as we will see later.

		\begin{theorem}\label{teo: fermionic crossed to 3- homomorphisms}
			Let $(\cb,f)$ be a spin-braided fusion category. Equivalence classes of braided $(\widetilde{G},z)$-crossed categories $\cC$ having a faithful $G$-grading with trivial component $\cb$ are in bijection with homomorphism of categorical $2$-groups $\widetilde{\widetilde{\rho}}:\underline{\underline{G}} \to \picardtres{\cb,f}$, such that $\widetilde{\rho}$ is a fermionic action of $(\widetilde{G},z)$ on $(\cB,f)$.
		\end{theorem}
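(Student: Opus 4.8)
The strategy is to obtain this fermionic statement as a refinement of Theorem~\ref{theorem: extensiones picard}, which already parametrizes \emph{all} braided $G$-crossed extensions of $\cB$ by categorical $2$-group homomorphisms $\underline{\underline{G}} \to \picardtres{\cb}$. The task is therefore to cut down both sides of that bijection by the fermionic condition and check the cut is compatible. On the geometric side, the extra data is that the underlying action $\widetilde{\rho}$ is a \emph{fermionic} action of $(\widetilde{G},z)$ on $(\cB,f)$ in the sense of Definition~\ref{accion}. On the algebraic side, the factorization $\picardtres{\cb,f}\subseteq\picardtres{\cb}$ from Definition~\ref{definition: pic(b,f)} isolates exactly those invertible modules whose associated braided autoequivalence fixes $f$.

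\textbf{Step 1: Start from the general bijection.} First I would invoke Theorem~\ref{theorem: extensiones picard}: a braided $(\widetilde{G},z)$-crossed \emph{category} $\cC$ with faithful $G$-grading and trivial component $\cb$ is, forgetting the fermion, a braided $G$-crossed extension of $\cb$, hence corresponds to a unique homomorphism $\widetilde{\widetilde{\rho}}:\underline{\underline{G}} \to \picardtres{\cb}$. Its truncation gives a categorical-group homomorphism $\underline{G}\to\picardos{\cb}$, which under the equivalence $\theta$ of Theorem~\ref{theorem: Pic equivalente Aut} is precisely a bosonic action $\widetilde{\rho}:\underline{G}\to\underline{\Autb{\cb}}$. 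So the general theory already produces the underlying bosonic action; I only need to track where the fermion goes.

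\textbf{Step 2: Match the fermionic conditions on both sides.} The additional hypothesis on $\cC$ is that $(\cb,f)$ is spin-braided and the $G$-crossed action is a \emph{fermionic} action of $(\widetilde{G},z)$. By Definition~\ref{accion} this means two things: each $g_*$ is a fermionic autoequivalence, i.e.\ $\theta_{\cM_g}(f)\cong f$ where $\cM_g=\widetilde{\widetilde{\rho}}(g)$; and the restriction to $\langle f\rangle$ recovers the class $\alpha=z$ in $H^2(G,\zdos)$ via Lemma~\ref{lemaasignacion}. The first condition is exactly the defining property of $\picardos{\cb,f}$ in Definition~\ref{definition: pic(b,f)}, so by Proposition~\ref{prop:equivalence fermionic functors} the truncated homomorphism factors through $\picardos{\cb,f}\simeq\underline{\Autb{\cb,f}}$ if and only if $\widetilde\rho$ is a fermionic action. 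Thus a $2$-homomorphism into $\picardtres{\cb}$ whose underlying $\widetilde\rho$ is fermionic is the same as a $2$-homomorphism landing in the full sub-$2$-group $\picardtres{\cb,f}$. Here I would lean on Corollary~\ref{col: equivalence fermionic crossed} to know that the equivariantization/de-equivariantization dictionary respects the fermion, so that ``the $G$-crossed action is fermionic'' is a well-defined, equivalence-invariant condition on $\cC$.

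\textbf{Step 3: Verify the restriction is a bijection.} To conclude I would argue that Theorem~\ref{theorem: extensiones picard}'s bijection restricts to a bijection between the two marked subclasses. Injectivity is inherited. For surjectivity I must check that any $2$-homomorphism $\widetilde{\widetilde{\rho}}:\underline{\underline{G}}\to\picardtres{\cb,f}$ really comes from a \emph{fermionic} crossed extension, not merely a $G$-crossed one: composing with the inclusion $\picardtres{\cb,f}\hookrightarrow\picardtres{\cb}$ and applying Theorem~\ref{theorem: extensiones picard} yields a braided $G$-crossed $\cC$ with $\cC_e\simeq\cb$; then the factorization through $\picardtres{\cb,f}$ forces each component functor to fix $f$, and the homotopy class $\theta_{\widetilde\rho}$ computed on $\langle f\rangle$ is $z$ by the assumed compatibility of $\widetilde\rho$ with $(\widetilde G,z)$, so the crossed action is fermionic and $(\cC,f)$ is a genuine braided $(\widetilde G,z)$-crossed extension.

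\textbf{Main obstacle.} The delicate point is Step~2 at the level of the full $2$-groups rather than their truncations. Proposition~\ref{prop:equivalence fermionic functors} is stated for the categorical groups $\underline{\Autb{\cb,f}}$ and $\picardos{\cb,f}$, but Theorem~\ref{teo: fermionic crossed to 3- homomorphisms} concerns homomorphisms of the \emph{$2$-groups} $\picardtres{\cb,f}$, carrying $2$-morphism data. I expect the hard part to be confirming that the inclusion $\picardtres{\cb,f}\hookrightarrow\picardtres{\cb}$ is a \emph{full} sub-$2$-group on $2$-morphisms, so that the associator/unit coherence data of a $2$-homomorphism is unaffected by imposing $\theta_\cM(f)=f$ and no new obstruction in $H^4$ or $H^3$ is introduced beyond those already governing the fermionic action via Theorem~\ref{theorem:bstruction fermionicaction}. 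Equivalently, I must ensure that the fermionic condition is ``codimension zero'' in the higher morphisms and only restricts objects, so the parametrization of the extension data transports verbatim from Theorem~\ref{theorem: extensiones picard}.
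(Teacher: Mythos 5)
Your proposal follows essentially the same route as the paper's proof: both restrict the bijection of Theorem \ref{theorem: extensiones picard} to the fermionic subclasses, using the identification of $\picardos{\cb,f}$ with $\underline{\Autb{\cb,f}}$ (Proposition \ref{prop:equivalence fermionic functors}) to translate ``the crossed action is fermionic'' into ``the $2$-homomorphism factors through $\picardtres{\cb,f}$''. The paper's argument is in fact terser than yours and does not explicitly address the fullness-on-$2$-morphisms point you flag as the main obstacle, so your version is, if anything, the more complete one.
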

		
		\begin{proof}
			According to Theorem \ref{theorem: extensiones picard}, a 2-group homomorphism $\widetilde{\widetilde{\rho}}: \underline{\underline{G}} \to \picardtres{\cb,f}$ corresponds to a braided $G$-crossed extension $\cD$ of $\cb$. The action of $G$ on $\cD_e=\cB$ is given by $\widetilde{\rho}$, so $\cD$ is a braided $(\widetilde{G},z)$-crossed extension of $\cB$.
			
			Conversely, if $\cD$ is a braided $(\widetilde{G},z)$-crossed extension of $\cB$, it corresponds to a $2$-group homomorphism $\widetilde{\widetilde{\rho}}: \underline{\underline{G}} \to \picardtres{\cB}$, whose action induced is fermionic since the action on $\cB$ is fermionic too. Now, the $G$-action for $g \in G$ on $\cb$ is given by the equivalence of $\operatorname{Fun}_{\cb}(\cD_g,\cD_g)$ with the  left and right  multiplication by elements of $\cB$, see \cite{ENO3}. In particular, if $\cD_g \in \operatorname{Pic}(\cb,f)$ then $g_*$ is a fermionic functor, i.e., $g_* \in \Autb{\cb,f}$. %As $r_*( \theta_{\widetilde{\rho}})= \alpha$ the action thus defined is a fermionic action of  $(\widetilde{G},z)$ on $(\cb,f)$.
		\end{proof}
	}
}

% Chapter Template

\section{Minimal modular extensions}\label{chapter: MINIMAL NON-DEGENERATE EXTENSIONS} % Main chapter title
Muger defines minimal modular extensions of a braided fusion category $\cB$  in \cite{Galois-Mueger}. For a Tannakian category, a complete description of its minimal modular extensions was presented in \cite{LKW}. Nevertheless,  for the super-Tannakian case, such description is yet an open problem.  We use Corollary \ref{col: equivalence fermionic crossed} and Theorem \ref{teo: fermionic crossed to 3- homomorphisms} to give an approach to the solution.

\subsection{Minimal modular extensions}\label{section: minimal modular extensions}

\begin{definition}\label{definition: minimal modular extensions }
Let $\cB$ be a braided fusion category. A minimal modular extension of $\cat{B}$ is a pair $(\cat{M},i)$, where  $\cat{M}$ is a modular fusion category such that  $i:\cat{B}\to \cat{M}$ is a braided full embedding and 
\[ \operatorname{FPdim}(\cM)=\operatorname{FPdim}(\cB)\operatorname{FPdim}(\mathcal{Z}_2(\cB)). \]

Two minimal modular extensions $(\cM,i)$ and $(\cM',i')$ are equi\-valent if there exists a braided equivalence $F: \cM \longrightarrow \cM'$ such that $F\circ i\cong i'$.
\end{definition}

\begin{example}[Modular extensions of $\SV$]
 For the symmetric super-Tannakian category $\SV$, there are 16 modular extensions (up to equivalence). They can be classified in two classes, the first one is given by 8 Ising braided modular categories parametrized by $\zeta$, such that $\zeta^8=-1$. A brief description  of them was given in Example \ref{Example:Ising}. The second one is given by 8 pointed modular categories $\operatorname{Vec}_A^{(\omega_k,c_k)}$ where $A$ is an abelian group of order four, and $(\omega_k,c_k)$ is an abelian 3-cocycle. A description of this type of categories was presented in Example \ref{example: action pointed braided categories of foru rank}. More information about this example can be found in
\cite{Kitaev20062, DGNO}.     
\end{example}

In \cite{LKW} the set of equivalence classes of minimal modular  extensions of a unitary braided fusion category $\cB$ is denoted by $\mathcal{M}_{ext}(\cb)$. In particular, if $\cb$ is a  symmetric fusion category, $\mathcal{M}_{ext}(\cb)$ is an abelian group with unit object $\mathcal{Z}(\cB)$.

Let $\cE$ be a symmetric fusion category; the set of minimal modular extensions of $\cE$ is non-empty since $\mathcal{Z}(\cE)$ is always a minimal modular extension. In this case, any minimal modular extension $\cM$ of $\cE$ can be thought as a module category over $\cE$ with action induced by tensor product of $\cM$. Then, the binary operation on $\mathcal{M}_{ext}(\cE)$ is defined using the tensor product of module categories over $\cE$. This operation is well defined according to \cite[Lemma 4.11]{LKW}. Moreover, the associativity of this operation is proved in \cite[Proposition 4.12]{LKW} in a more general case.

The existence of the neutral element was proved in \cite[Lemma 4.18]{LKW}. There is shown that for any symmetric category $\cE$, the Drinfel center $\mathcal{Z}(\cE)$ is the neutral object in $\mext(\cE)$.

\begin{example}[Modular extensions of Tannakian fusion categories]\label{bosonicsymmetry}
	For a symmetric Tannakian category $\Rep (G)$,  the group of modular extensions (up to equivalence) is isomorphic to the abelian group $H^3(G,\mathbb{C}^\times)$. For each $\omega \in H^3(G,\mathbb{C}^\times )$, $\mathcal{Z}(\operatorname{Vec}_G^\omega)$ is a modular extension of $\Rep(G)$, see \cite{LKW}.
\end{example}

\subsection{Obstruction theory to existence of minimal modular extensions}\label{subsection: Obstruction theory to existence of minimal non-degenerate extensions}

Following \cite{Galois-Mueger, modularizacion}, we say that a braided fusion category is \emph{modularizable} if $\mathcal{Z}_2(\cB)$ is Tannakian. Note that if $\Rep(G)=\mathcal{Z}_2(\cB)$ then $\cB_G$ is a modular fusion category; therefore, the term modularizable makes sense from this point of view.
\begin{definition}[\cite{galindo2017categorical}]\label{definition: H4-anomaly}
Let $\cB$ be a modularizable braided fusion category with $\mathcal{Z}_2(\cB)=\Rep(G)$. The de-equivariantization $\cB_G$ has associated a monoidal functor
\[
\xymatrix{ 	& \underline{G} \ar[r]^-{\widetilde{\rho}} &  \underline{\operatorname{Aut}_\otimes^{br}(\cB_{G})} \ar[r]^{\Phi} &  \underline{\Pic{\cB_G}}\\
},
\]
where $\widetilde{\rho}$ is the canonical action of $G$ on the modular  fusion category $\cb_G$.
We define the \emph{$H^4$-anomaly} of $\cB$ as the $H^4$-obstruction of $\widetilde{\rho}$ in $H^4(G,\mathbb{C}^\times)$.
\end{definition}

According to Proposition \ref{proposition: O4-obstruction}, trivial anomaly  is equivalent to saying that $\cB_G$ has a $G$-extension with induced action on it given by $\widetilde{\rho}$. In fact, if the $O_4$-obstruction is trivial, there exists a lifing of $\widetilde{\rho}$ to a 2-homomorphism $\widetilde{\widetilde{\rho}}: \underline{\underline{G}} \to \underline{\underline{\operatorname{Pic}(\cB_G)}}$. Thus,  according to Theorem \ref{theorem: extensiones picard}, there exists a braided $G$-crossed extension of $\cB_G$.
\begin{definition}[\cite{ENO2}]
A braided fusion category $\cb$ is called \emph{slightly degenerate} if $\mathcal{Z}_2(\cb)$ is braided equivalent to $\SV$.
\end{definition}

 In \cite{galindo2017categorical}, we have mentioned some relations between slightly degenerate fusion categories and categories over  $\Rep(\widetilde{G},z)$ under fermionic actions . Specifically, if the Muger center of $\cB$ is a super-Tannakian category $\Rep(\widetilde{G},z)$, the category $\cB_G$ is a slightly degenerate fusion category where $G=\widetilde{G}/\langle z \rangle$.

 Theorem \ref{theorem:teorema de-equivariantizacion y extensiones} tells us that the study of minimal modular extensions for a non-modularizable fusion category can be reduced to the study of certain associate slightly degenerate fusion category. 

If $\cb$ is non-modularizable, that is $\mathcal{Z}_2(\cb) \cong \Rep(\widetilde{G},z)$,  the maximal central tannakian subcategory of $\cb$ is braided equivalent to $\Rep(G)$ with  $G \cong \widetilde{G}/\langle z \rangle$.

\begin{theorem}[\cite{galindo2017categorical}]\label{theorem:teorema de-equivariantizacion y extensiones}
Let $\cB$ be a braided fusion category with non-trivial maximal central Tannakian subcategory $\Rep(G) \subseteq \mathcal{Z}_2(\cb)$. 

\begin{enumerate}[leftmargin=*,label=\rm{(\alph*)}]
    \item \label{theorem 4.10 part 1}If $\cB$ is modularizable, $\cB$ admits a minimal modular extension if and only if the $H^4$-anomaly of $\cB$ vanishes. 
    \item\label{theorem 4.10 part 2} If $\cB$ is non-modularizable with $\mathcal{Z}_2(\cb)=\Rep(\widetilde{G},z)$,  $\cB$ admits a minimal modular extension if and only if the following conditions hold:
    \begin{enumerate}[leftmargin=*,label=\rm{(\roman*)}]
\item\label{2a} the slightly degenerate braided fusion category $\cB_G$ has a minimal modular extension $\cS$, 

\item\label{2b} there exists a fermionic action of $(\widetilde{G},z)$ on $\cS$ such that $\cB_G$ is $G$-stable, and the restriction to $\cb_G$ coincides with the canonical action of $G$ on $\cb_G$,

\item\label{2c}the anomaly of $\cS^G$ vanishes.
    \end{enumerate}
\end{enumerate}
\end{theorem}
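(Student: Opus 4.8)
Both parts rest on de-equivariantizing a putative minimal modular extension by the maximal central Tannakian subcategory $\Rep(G)$, thereby translating the existence question into the obstruction theory for lifting a categorical-group action to a $2$-group homomorphism into the Picard $2$-group, as in Proposition~\ref{proposition: O4-obstruction} and Theorem~\ref{theorem: extensiones picard}. Part~\ref{theorem 4.10 part 1} is the base case, and part~\ref{theorem 4.10 part 2} will be reduced to it by interposing a modularizable category between $\cB$ and $\cM$.

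\textbf{Part~\ref{theorem 4.10 part 1}.} Assume $\mathcal{Z}_2(\cB)=\Rep(G)$. First I would record that $\cB_G$ is modular: since $\Rep(G)=\mathcal{Z}_2(\cB)$ is the Müger center, every object of $\cB$ centralizes it, so $C_\cB(\Rep(G))=\cB$, and Proposition~\ref{proposition: componente trivial de-equivariantizacion}\ref{formula componente trivial} (using $\mathcal{Z}_2(C_\cB(\Rep(G)))=\Rep(G)$) makes $\cB_G$ modular with the canonical $G$-action $\widetilde{\rho}$ of Definition~\ref{definition: H4-anomaly}. Given a minimal modular extension $(\cM,i)$, the double centralizer theorem in the modular category $\cM$ gives $\FPdim(C_\cM(\Rep(G)))=\FPdim(\cM)/|G|=\FPdim(\cB)$ together with $\cB\subseteq C_\cM(\Rep(G))$, whence $C_\cM(\Rep(G))=\cB$; de-equivariantizing $\cM$ by $\Rep(G)$ then yields, via Theorem~\ref{theorem: equivalencia briaded G-crossed extensions} and Proposition~\ref{proposition: componente trivial de-equivariantizacion}, a braided $G$-crossed extension $\cM_G$ of $\cB_G$ whose induced action is $\widetilde{\rho}$. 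Conversely, any braided $G$-crossed extension of $\cB_G$ with induced action $\widetilde{\rho}$ equivariantizes (Proposition~\ref{proposition de-equivariantization}) to a modular category of Frobenius--Perron dimension $|G|\FPdim(\cB)$ containing $\cB=(\cB_G)^G$, i.e.\ to a minimal modular extension of $\cB$. Thus minimal modular extensions of $\cB$ correspond to braided $G$-crossed extensions of $\cB_G$ inducing $\widetilde{\rho}$, which by Theorem~\ref{theorem: extensiones picard} correspond to $2$-group homomorphisms $\uu{G}\to\picardtres{\cB_G}$ lifting $\Phi\circ\widetilde{\rho}$; by Proposition~\ref{proposition: O4-obstruction} such a lift exists exactly when the $H^4$-obstruction of $\widetilde{\rho}$, i.e.\ the $H^4$-anomaly of $\cB$, vanishes.

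\textbf{Part~\ref{theorem 4.10 part 2}.} Now $\mathcal{Z}_2(\cB)=\Rep(\widetilde{G},z)$ and $\cB_G$ is slightly degenerate with $\mathcal{Z}_2(\cB_G)=\SV$. The key is the intermediate category $\cN:=C_\cM(\Rep(G))$ attached to a minimal modular extension $\cM$. The double centralizer theorem gives $\FPdim(\cN)=\FPdim(\cM)/|G|=2\,\FPdim(\cB)$, and since $\Rep(G)$ is symmetric and self-centralizing, $\mathcal{Z}_2(\cN)=\cN\cap C_\cM(\cN)=\cN\cap\Rep(G)=\Rep(G)$; hence $\cN$ is modularizable and $\cN=\cS^G$, where $\cS:=\cN_G$ is modular of dimension $2\,\FPdim(\cB_G)=\FPdim(\cB_G)\FPdim(\SV)$ containing $\cB_G$, i.e.\ a minimal modular extension of $\cB_G$ (condition~\ref{2a}). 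The de-equivariantization action of $G$ on $\cS=\cN_G$ is, through Corollary~\ref{coro: equivalencia dequivariantizacion- equivariantizacion braided case}, a fermionic action of $(\widetilde{G},z)$ for which $\cB_G$ is $G$-stable and whose restriction to $\cB_G$ is the canonical action (condition~\ref{2b}). Finally $\cM$ is a minimal modular extension of the modularizable category $\cN=\cS^G$ (the dimensions match: $\FPdim(\cN)\FPdim(\mathcal{Z}_2(\cN))=2\,\FPdim(\cB)\,|G|=\FPdim(\cM)$), so part~\ref{theorem 4.10 part 1} forces the anomaly of $\cS^G$ to vanish (condition~\ref{2c}). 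Running these equivalences backwards---build $\cN=\cS^G$ from $\cS$ and the fermionic action of~\ref{2b}, use~\ref{2c} with part~\ref{theorem 4.10 part 1} to produce a minimal modular extension $\cM$ of $\cN$, and check $\cB\subseteq\cN\subseteq\cM$ with $\FPdim(\cM)=\FPdim(\cB)\FPdim(\mathcal{Z}_2(\cB))$---shows $\cM$ is a minimal modular extension of $\cB$, giving the converse.

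\textbf{Main obstacle.} The routine points---the dimension counts, the double centralizer identities, and the modularity of trivial components---are mechanical once the cited results are in place. The delicate step is the non-modularizable case: one must verify that the $G$-action produced on $\cS=\cN_G$ is genuinely \emph{fermionic} for the correct super-group $(\widetilde{G},z)$, i.e.\ that the class $\theta_{\widetilde{\rho}}$ equals the extension class $\alpha\in H^2(G,\zdos)$ determined by $\mathcal{Z}_2(\cB)=\Rep(\widetilde{G},z)$, and that this fermionic structure is exactly the data of condition~\ref{2b}. Pinning this down requires tracking the fermion $f\in\SV\subseteq\cB_G$ through the biequivalence of Corollary~\ref{coro: equivalencia dequivariantizacion- equivariantizacion braided case} and confirming in the reverse direction that $\mathcal{Z}_2(\cS^G)=\Rep(G)$ rather than something larger---conceptually, the extra objects of $\cN=\cS^G$ beyond $\cB$ are precisely what break the centrality of $f$, collapsing the super-Tannakian center of $\cB$ to the Tannakian center of $\cN$---so that part~\ref{theorem 4.10 part 1} may legitimately be applied to $\cN=\cS^G$.
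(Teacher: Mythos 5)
The paper does not actually prove this theorem: it is imported verbatim from \cite{galindo2017categorical} and used as a black box, so there is no internal proof to compare against. Judged on its own merits, your reconstruction is sound and follows the strategy one would expect (and which the cited source uses): de-equivariantize a putative minimal modular extension $\cM$ by the maximal central Tannakian subcategory, identify $C_\cM(\Rep(G))$ via the double centralizer theorem, and in the non-modularizable case interpose $\cN=C_\cM(\Rep(G))=\cS^G$ to reduce part (b) to part (a). Your dimension counts are correct, and the two delicate points you flag are real but close as you suspect: for the reverse direction, non-degeneracy of $\cS$ together with Proposition \ref{proposition: componente trivial de-equivariantizacion}\ref{centralizador y  equivariantizacion} gives $\mathcal{Z}_2(\cS^G)=(C_\cS(\cS))^G=\Vc^G=\Rep(G)$, so part (a) legitimately applies to $\cS^G$; and the fermionic nature of the induced action on $\cN_G$ with the correct class $\alpha\in H^2(G,\zdos)$ is exactly the content of Corollary \ref{coro: equivalencia dequivariantizacion- equivariantizacion braided case} applied to $\cN\supseteq\Rep(\widetilde{G},z)$ with $\mathcal{Z}_2(\cN)=\Rep(G)$. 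I see no step that would fail.
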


\begin{remark}\label{comentario: sobre teorema e interpretacion a traves de braided super-crossed extension}
\begin{enumerate}[leftmargin=*,label=\rm{(\alph*)}]
\item Another way of expressing part \ref{theorem 4.10 part 2} of Theorem \ref{theorem:teorema de-equivariantizacion y extensiones} is the following: a non-modularizable braided fusion category $\cB$ has a minimal modular extension if and only if the slightly degenerate fusion category $\cB_G$ has a minimal modular extension $\cS$ which in turn has a braided $(\widetilde{G},z)$-crossed extension.
\item Theorem \ref{theorem:teorema de-equivariantizacion y extensiones} was used in \cite{galindo2017categorical} to show examples of braided fusion categories without minimal modular extensions. 
\end{enumerate}
\end{remark}

The de-equivariantization defines a well-defined map 
\begin{eqnarray*}
            D: &\mathcal{M}_{ext}(\cb)& \to \mathcal{M}_{ext}(\cb_G) \label{equation: mapa D forma general}\\
            &\cM& \to (\cM_G)_e.\nonumber
\end{eqnarray*}
 For any minimal modular category $\cM$ of $\cB$ the map $D$ sends $\cM$ to the trivial component of the de-equivariantization  $\cM_G$, see \cite{LKW} and \cite{galindo2017categorical}.

In particular, for a super-Tannakian category $\Rep(\widetilde{G},z)$ with maximal central Tannakian subcategory $\Rep(G)$, 
\begin{eqnarray}\label{equation: definicion D}
            D: \mathcal{M}_{ext}(\Rep(\widetilde{G},z)) \to \mathcal{M}_{ext}(\SV)  \label{equation: definition D}
\end{eqnarray}
is a group homomorphism.

	\begin{corollary}[\cite{galindo2017categorical}]\label{corol:Ostrik}
Let $\Rep{(\widetilde{G},z)}$ be a finite super-group. The map \[D:\mathcal{M}_{ext}(\Rep{(\widetilde{G},z)})\to \mathcal{M}_{ext}(\SV)\] is surjective if and only if $(\widetilde{G},z)$ is a trivial super-group.
\end{corollary}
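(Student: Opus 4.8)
The plan is to exploit that for $\cB=\Rep(\widetilde{G},z)$ the target of $D$ is $\cB_G=\SV$, whose minimal modular extensions are the $16$ categories recalled above: the $8$ Ising categories and the $8$ pointed categories $\Vc_A^{(\omega_k,c_k)}$. The decisive reduction is that any class $\cS\in\mext(\SV)$ lying in the image of $D$ must carry a fermionic action of $(\widetilde{G},z)$. Assuming $G\neq 1$ (the case $G=1$ being the trivial super-group $\widetilde{G}=\zdos$, for which $D=\id$ is immediately surjective), the category $\cB=\Rep(\widetilde{G},z)$ is non-modularizable with $\mathcal{Z}_2(\cB)=\Rep(\widetilde{G},z)$ and maximal central Tannakian subcategory $\Rep(G)$. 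Hence Theorem \ref{theorem:teorema de-equivariantizacion y extensiones} applies, and a minimal modular extension $\cM$ with $D(\cM)=(\cM_G)_e=\cS$ produces, through its defining condition on $\cS$, a fermionic action of $(\widetilde{G},z)$ on $\cS$ restricting to the canonical action on $\SV$; equivalently, by Corollary \ref{col: equivalence fermionic crossed} the braided $G$-crossed category $\cM_G$ is a braided $(\widetilde{G},z)$-crossed extension of its trivial component $\cS$, whose structural action is fermionic.

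For the forward implication I argue by contraposition. Suppose $(\widetilde{G},z)$ is non-trivial, i.e.\ $\widetilde{G}\neq G\times\zdos$, equivalently $\alpha\neq 0$ in $H^2(G,\zdos)$. Choose $\cS$ to be any of the $8$ Ising modular extensions of $\SV$. By Proposition \ref{proposition: fermionicactionsonIsing}, only trivial super-groups act fermionically on a spin-braided Ising category, so there is no fermionic action of $(\widetilde{G},z)$ on $\cS$. By the reduction above, $\cS$ cannot lie in the image of $D$, and therefore $D$ is not surjective.

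For the converse, assume $\widetilde{G}=G\times\zdos$. Then $\Rep(\widetilde{G},z)$ is braided equivalent to $\Rep(G)\boxtimes\SV$, with the fermion supplied by the generator of the $\SV$-factor. Given an arbitrary $\cS\in\mext(\SV)$, I would fix a minimal modular extension $\cM_1$ of $\Rep(G)$, for instance $\cM_1=\mathcal{Z}(\Vc_G)$, and form $\cM_1\boxtimes\cS$. This contains $\Rep(G)\boxtimes\SV$ as a braided subcategory, is modular as a Deligne product of modular categories, and satisfies $\FPdim(\cM_1\boxtimes\cS)=|G|^2\cdot 4=(2|G|)^2=\big(\FPdim(\Rep(G)\boxtimes\SV)\big)^2$, so it is a minimal modular extension of $\Rep(\widetilde{G},z)$. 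Since de-equivariantization by $G$ affects only the $\Rep(G)$-factor and $C_{\cM_1}(\Rep(G))=\Rep(G)$ for a minimal modular extension, Proposition \ref{proposition: componente trivial de-equivariantizacion} yields
\[ D(\cM_1\boxtimes\cS)=\big((\cM_1\boxtimes\cS)_G\big)_e=\big(\Rep(G)\boxtimes\cS\big)_G=\Rep(G)_G\boxtimes\cS\cong\cS, \]
using $\Rep(G)_G\cong\Vc$. As $\cS$ was arbitrary, $D$ is surjective.

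The step I expect to be the main obstacle is the reduction of the first paragraph: one must check that a class in the image of $D$ genuinely carries a fermionic $(\widetilde{G},z)$-action, and not merely a bosonic $G$-action, so that Proposition \ref{proposition: fermionicactionsonIsing} can be invoked. Concretely, this means tracking the super-Tannakian subcategory $\Rep(\widetilde{G},z)\subseteq\cM$, and especially its fermion, through de-equivariantization by the maximal central Tannakian subcategory $\Rep(G)$, and then identifying the induced $G$-crossed action on $\cS=(\cM_G)_e$ with a fermionic action whose restriction to $\langle f\rangle$ realizes the prescribed class $\alpha\in H^2(G,\zdos)$ in the sense of Definition \ref{accion}. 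Once this identification is in place, both implications follow from Proposition \ref{proposition: fermionicactionsonIsing} and the Deligne product construction above.
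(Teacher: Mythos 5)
Your argument is correct, and it is worth noting that the paper itself does not reprove this corollary (it imports it from \cite{galindo2017categorical}); measured against the machinery the paper does set up, your two directions sit differently. The forward direction is essentially the intended one: a class in the image of $D$ must carry a fermionic action of $(\widetilde{G},z)$ --- this is exactly the content of Proposition \ref{prop:image of D} together with the definition of a braided $(\widetilde{G},z)$-crossed extension, so the ``main obstacle'' you flag is already supplied by the paper --- and then Proposition \ref{proposition: fermionicactionsonIsing} excludes the eight Ising classes whenever $\alpha\neq 0$, which is precisely how the paper later argues in Example \ref{example: z4}. Your converse, however, takes a genuinely different and more elementary route: rather than running the obstruction theory (building a fermionic action on each of the $16$ extensions of $\SV$ and lifting it to a $2$-homomorphism $\underline{\underline{G}}\to\picardtres{\cC,f}$, as Theorem \ref{extensiontriviasupergroup} and Theorem \ref{proposition: preimagen of D} would have one do), you exploit $\Rep(G\times\zdos,z)\cong\Rep(G)\boxtimes\SV$ and exhibit an explicit preimage $\mathcal{Z}(\Vc_G)\boxtimes\cS$ for each $\cS\in\mext(\SV)$. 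The dimension count, the identification $C_{\mathcal{Z}(\Vc_G)}(\Rep(G))=\Rep(G)$, and the computation of the trivial component via Proposition \ref{proposition: componente trivial de-equivariantizacion} all check out, so this buys a constructive, obstruction-free proof of surjectivity; what it gives up is the finer information the cohomological parametrization provides (the size of the fibres of $D$), which is what the paper's later sections are after.
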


\begin{theorem}[\cite{galindo2017categorical}]\label{extensiontriviasupergroup}
Let $(\cb,f)$ be a spin-braided fusion category of dimension four, and $G$ be a finite group with a group homomorphism $\xi: G \longrightarrow \Autb{\cb,f}$.  Then $\xi $ can be extended to a bosonic action  and  to a 2-homomorphism $\widetilde{\widetilde{\xi}}: \underline{\underline{G}} \longrightarrow \picardtres{\cb}$.
\end{theorem}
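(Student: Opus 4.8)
The plan is to combine the explicit computation of $\Autb{\cb,f}$ in dimension four with the obstruction theory of Propositions \ref{Proposition:Obstruction-bosonic} and \ref{proposition: O4-obstruction}, reducing the $H^4$-obstruction to a cohomology group that vanishes. A spin-braided fusion category of dimension four is either pointed, of the form $\Vc_A^{(\omega_k,c_k)}$ with $|A|=4$, or an Ising category, and in both cases $\cb$ is non-degenerate; hence by Theorem \ref{theorem: Pic equivalente Aut} I may identify $\picardos{\cb}$ with $\underline{\Autb{\cb}}$ and view $\xi$ as landing in $\Pic{\cb}$ through the inclusion $\Autb{\cb,f}\subseteq\Autb{\cb}$. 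For an Ising category the three simple objects have pairwise distinct twists, so $\Autb{\cb}$, and a fortiori $\Autb{\cb,f}$, is trivial; then $\xi$ is trivial and both assertions hold with the trivial action and the trivial $2$-homomorphism. Thus I may assume $\cb=\Vc_A^{(\omega_k,c_k)}$ is pointed.

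The first assertion is immediate: by Theorem \ref{therorem: extensions of A with order 4}(b) the homomorphism $\xi\colon G\to\zdos\cong\Autb{\cb,f}$ is realized by a bosonic action $\widetilde{\xi}\colon\underline{G}\to\underline{\Autb{\cb,f}}$, which is the required lifting in the sense of Definition \ref{definition: lifting }. In the language of Proposition \ref{Proposition:Obstruction-bosonic}, this says the $O_3$-obstruction of $\xi$ vanishes.

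For the second assertion I would reduce to the cyclic case. By Theorem \ref{therorem: extensions of A with order 4}(a) we have $\Autb{\cb,f}\cong\zdos$, so $\xi$ factors as $\xi=\iota\circ\overline{\xi}$, where $\overline{\xi}\colon G\to\zdos$ is a group homomorphism and $\iota\colon\zdos\xrightarrow{\sim}\Autb{\cb,f}\hookrightarrow\Autb{\cb}\cong\Pic{\cb}$. Applying the previous step with $G=\zdos$ and $\xi=\iota$, the action $\iota$ is realized by a bosonic action $\widetilde{\iota}$, and by Proposition \ref{proposition: O4-obstruction} this bosonic action lifts to a homomorphism of categorical $2$-groups precisely when the $H^4$-obstruction $O_4(\widetilde{\iota},\mu)$ of \eqref{equation: h4-obstruction} is trivial. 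The crucial point is that this obstruction lives in $H^4(\zdos,\mathbb{C}^\times)$, and since $\mathbb{C}^\times$ is divisible and $\zdos$ is cyclic, $H^4(\zdos,\mathbb{C}^\times)=0$. Hence $O_4(\widetilde{\iota},\mu)=0$ for every $\mu$, and $\iota$ lifts to a $2$-homomorphism $\widetilde{\widetilde{\iota}}\colon\underline{\underline{\zdos}}\to\picardtres{\cb}$.

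Finally I would set $\widetilde{\widetilde{\xi}}:=\widetilde{\widetilde{\iota}}\circ\underline{\underline{\overline{\xi}}}$, where $\underline{\underline{\overline{\xi}}}\colon\underline{\underline{G}}\to\underline{\underline{\zdos}}$ is the homomorphism of discrete $2$-groups induced by $\overline{\xi}$. As a composite of homomorphisms of categorical $2$-groups this is again such a homomorphism $\underline{\underline{G}}\to\picardtres{\cb}$, its underlying group homomorphism is $\iota\circ\overline{\xi}=\xi$, and its truncation is a bosonic action extending $\xi$, consistent with the first assertion. I expect the main obstacle to be conceptual rather than computational: one must check that the machinery of Proposition \ref{proposition: O4-obstruction} is functorial enough for this reduction, i.e. that the obstruction attached to $\xi$ is the pullback $\overline{\xi}^{*}O_4(\widetilde{\iota},\mu)$, so that the vanishing of $H^4(\zdos,\mathbb{C}^\times)$ suffices. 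Should functoriality be awkward to invoke directly, the explicit tables for $\mu$ and $\gamma$ in Example \ref{example: action pointed braided categories of foru rank} can instead be substituted into \eqref{equation: h4-obstruction} to verify $O_4=0$ by hand.
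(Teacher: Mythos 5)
The paper states this theorem only as a citation to \cite{galindo2017categorical} and reproduces no proof, so there is no in-paper argument to compare against; judged on its own merits, your proposal is essentially correct and follows the natural route: reduce to $\Autb{\cb,f}\cong\zdos$, obtain the bosonic action from Theorem \ref{therorem: extensions of A with order 4}(b), and kill the $H^4$-obstruction using $H^4(\zdos,\mathbb{C}^\times)=0$. Three points deserve tightening. First, your closing worry about functoriality of the obstruction is unnecessary: once $\widetilde{\widetilde{\iota}}$ exists on $\underline{\underline{\zdos}}$, the composite $\widetilde{\widetilde{\iota}}\circ\underline{\underline{\overline{\xi}}}$ is already a homomorphism of categorical $2$-groups whose underlying homomorphism is $\iota\circ\overline{\xi}=\xi$ and whose $1$-truncation is a bosonic action lifting $\xi$; no identification of the obstruction of $\xi$ with a pullback class is required (that identification is the alternative phrasing of the same argument, not an extra hypothesis to verify). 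Second, in the Ising case the phrase ``pairwise distinct twists'' is not the right justification, since braided autoequivalences need not preserve twists; triviality on objects already follows from the Frobenius--Perron dimensions $1,1,\sqrt 2$ together with $\mathbf 1\mapsto\mathbf 1$, and the absence of nontrivial soft braided autoequivalences of an Ising category is the standard fact to quote (consistent with Proposition \ref{proposition: fermionicactionsonIsing}). Third, you use non-degeneracy of $\cb$ implicitly when invoking Theorem \ref{theorem: Pic equivalente Aut} and when asserting the Ising/pointed dichotomy; this holds for the categories of Examples \ref{example: categorias puntedadas dimension 4} and \ref{Example:Ising}, which is the intended scope (the sixteen minimal modular extensions of $\SV$), but a degenerate spin-braided category of dimension four such as $\SV\boxtimes\SV$ falls outside your case analysis, so the non-degeneracy hypothesis should be stated explicitly.
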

{ 

\subsection{The group \texorpdfstring{$\cat{M}_{ext}(\operatorname{Rep}(\widetilde{G},z))$.} .}\label{section: mext group}

We use the homomorphism $D$, Corollary \ref{col: equivalence fermionic crossed}, and Theorem \ref{teo: fermionic crossed to 3- homomorphisms} to describe in cohomology terms the minimal modular extensions of a super-Tannakian category $\Rep(\widetilde{G},z)$.

In this subsection, we denote by $\cb$  a pointed braided fusion category of dimension four presented in Example \ref{example: action pointed braided categories of foru rank}, by $\cat{I}$ an Ising category (see  Example \ref{Example:Ising}), and  by
\begin{eqnarray*}
            D: \mext(\Rep(\widetilde{G},z)) \to \mext (\SV)
\end{eqnarray*}
the group homomorphism defined in  (\ref{equation: definition D}). We fix the following notation, $(\widetilde{G},z)$ is a finite super-group, $G$ is the quotient group $\widetilde{G}/\langle z  \rangle$, and the pair $(G,\alpha)$ is identified with the super-group $(\widetilde{G},z)$ where $\alpha \in Z^2(G,\zdos)$ is the unique 2-cocycle (up to cohomology) associated to the super-group $(\widetilde{G},z)$. 

By definition of $D$, for a minimal modular extension $\cM$ of $\Rep(\widetilde{G},z)$, $D(\cM)$ is a minimal modular extension of $\SV$. In fact, we have that $\Rep(\widetilde{G},z)_G \cong \SV$, and $D(\cM)=(\cM_G)_e $ is modular with $\operatorname{FPdim}((\cM_G)_e)=4$.

On the other hand, note that every minimal modular extension $\cC$ of $\SV$ has a natural structure of spin-braided fusion category where the natural fermion corresponds to the inclusion of  $\SV$ in $\cC$.

\begin{proposition}\label{prop:image of D}
	Let $\cC$ be a minimal modular extension of $\SV$. The spin-braided fusion category $\cC$ is in the image of $D$ if and only if $\cC$ has a braided $(\widetilde{G},z)$-crossed extension. 
\end{proposition}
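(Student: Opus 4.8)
The plan is to use the de-equivariantization/equivariantization bijection of Corollary~\ref{col: equivalence fermionic crossed} as the bridge between the two sides, together with a Frobenius--Perron dimension count and the fact that equivariantizing a non-degenerate braided $G$-crossed category with faithful grading again yields a modular category. Throughout I write $\SV=\langle f\rangle$, so a minimal modular extension $\cC$ of $\SV$ is exactly a modular category with $\FPdim(\cC)=\FPdim(\SV)\FPdim(\mathcal{Z}_2(\SV))=4$, equipped with a braided embedding $\SV\hookrightarrow\cC$ and hence a spin-braided structure with fermion $f$. I also record the numerology $\FPdim(\Rep(\widetilde{G},z))=|\widetilde{G}|=2|G|$ and, since $\Rep(\widetilde{G},z)$ is symmetric, $\FPdim(\mathcal{Z}_2(\Rep(\widetilde{G},z)))=2|G|$; thus any minimal modular extension of $\Rep(\widetilde{G},z)$ must have dimension $4|G|^2$.

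First I would treat the forward implication. Assume $\cC$ lies in the image of $D$, say $\cC=D(\cM)=(\cM_G)_e$ for some $\cM\in\mext(\Rep(\widetilde{G},z))$. Since $\cM$ contains $\Rep(\widetilde{G},z)$ as a symmetric fusion subcategory, Corollary~\ref{col: equivalence fermionic crossed} applies and exhibits $\cD:=\cM_G$ as a braided $(\widetilde{G},z)$-crossed fusion category, whose trivial component is $\cD_e=(\cM_G)_e=\cC$. By the very definition of a braided $(\widetilde{G},z)$-crossed category this means $(\cD_e,f)=(\cC,f)$ is spin-braided and carries the prescribed fermionic action, so $\cD$ is a braided $(\widetilde{G},z)$-crossed extension of $(\cC,f)$. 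The only point needing care is that the fermion supplied by the crossed structure agrees with the canonical one: the fermion of $\cD_e$ is the de-equivariantization image $\SV=\Rep(\widetilde{G},z)_G$, which lands in $C_\cM(\Rep(G))_G=(\cM_G)_e$ by Proposition~\ref{proposition: componente trivial de-equivariantizacion}\ref{formula componente trivial} (using that $\Rep(\widetilde{G},z)$, being symmetric, centralizes $\Rep(G)$), and this is precisely the structural embedding $\SV\hookrightarrow\cC$ that makes $\cC$ a minimal modular extension of $\SV$.

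For the converse, suppose $(\cC,f)$ admits a braided $(\widetilde{G},z)$-crossed extension $(\cD,f)$, so $\cD_e\cong\cC$ is modular with faithful $G$-grading. Set $\cM:=\cD^G$. By Corollary~\ref{col: equivalence fermionic crossed} this is a braided fusion category containing $\Rep(\widetilde{G},z)$ as a symmetric subcategory, and by Proposition~\ref{proposition: equivalencia equivariantizacion} one has $\cM_G\cong\cD$, whence $D(\cM)=(\cM_G)_e=\cD_e=\cC$ (matching fermions, as the copy of $\SV$ recovered is again $\langle f\rangle$). It remains to verify that $\cM$ is genuinely a minimal modular extension of $\Rep(\widetilde{G},z)$. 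The dimension count is immediate: the faithful $G$-grading gives $\FPdim(\cD)=|G|\FPdim(\cC)=4|G|$, hence $\FPdim(\cM)=|G|\FPdim(\cD)=4|G|^2$, exactly the required value. The decisive step, and the one I expect to be the main obstacle, is non-degeneracy of $\cM$: I would argue that the equivariantization of the braided $G$-crossed category $\cD$ with non-degenerate trivial component $\cC$ and faithful grading is again modular (a gauging statement, cf.\ \cite{DGNO}), which can be made precise through Proposition~\ref{proposition: componente trivial de-equivariantizacion} by computing $\mathcal{Z}_2(\cM)$ via the commutation of centralizers with (de-)equivariantization and invoking $\mathcal{Z}_2(\cC)=\Vc$. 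Granting this, $\cM$ is modular, contains $\Rep(\widetilde{G},z)$ with the correct dimension, and satisfies $D(\cM)=\cC$, so $\cC$ lies in the image of $D$, completing both directions.
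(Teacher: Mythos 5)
Your argument is correct and travels the same equivariantization/de-equivariantization bridge as the paper, but the two proofs distribute the work differently. The paper settles both implications by citing Theorem \ref{theorem:teorema de-equivariantizacion y extensiones}: that theorem already packages the statements that $\cM_G$ is a braided $(\widetilde{G},z)$-crossed extension of $D(\cM)$ and that, conversely, the equivariantization of such an extension is a minimal modular extension of $\Rep(\widetilde{G},z)$. You avoid that black box and verify the converse by hand: your Frobenius--Perron count $\FPdim(\cD^G)=|G|^2\,\FPdim(\cC)=4|G|^2$ is exactly the required minimality bound, and the remaining point, non-degeneracy of $\cD^G$, is the standard gauging fact that equivariantizing a faithfully graded braided $G$-crossed category with non-degenerate trivial component yields a non-degenerate category (obtainable from Proposition \ref{proposition: componente trivial de-equivariantizacion} together with $\mathcal{Z}_2(\cC)=\Vc$, or quoted directly from \cite{DGNO}). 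You only sketch this last step (``granting this''), which is the one place your write-up is not self-contained; the fact is true and is precisely the content the paper imports wholesale, so this is a matter of completeness rather than correctness. Your route buys independence from Theorem \ref{theorem:teorema de-equivariantizacion y extensiones}, at the cost of having to supply the gauging argument; the paper's buys brevity. Your explicit check that the fermion of $\cD_e$ agrees with the structural embedding $\SV\hookrightarrow\cC$, via Proposition \ref{proposition: componente trivial de-equivariantizacion}, is a detail the paper passes over in silence and is worth keeping.
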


\begin{proof}
	Consider a minimal modular extension $\cC$ of $\SV$ in the image of $D$, then there is $\cM \in \mext(\Rep(\widetilde{G},z))$ such that $D(\cM)=\cC$. By Theorem \ref{theorem:teorema de-equivariantizacion y extensiones}, $\cM_G$ is a braided $(\widetilde{G},z)$-crossed extension of $\cC$.
	
	If $\cC \in \mext(\SV)$ has a braided $(\widetilde{G},z)$-crossed extension $\cat{L}$, the modular fusion category $\cat{L}^G$ is a category on $\Rep(\widetilde{G},z)$ according to Corollary \ref{col: equivalence fermionic crossed}. Moreover, by Theorem \ref{theorem:teorema de-equivariantizacion y extensiones},  $\cat{L}^G$ is a minimal modular extension of $\Rep(\widetilde{G},z)$.
	\end{proof}

\begin{corollary}\label{cor: preimage C}
Let $\cC$ be a minimal modular extension of $\SV$. The pre-image of  $\cC$ with respect to $D$ is in correspondence with 2-homomorphisms $\widetilde{\widetilde{\rho}}:\underline{\underline{G}} \to \underline{\underline{\operatorname{Pic}(\cC,f)}}$ such that the truncation $\widetilde{\rho}$ is a fermionic action of $(\widetilde{G}, z)$.
\end{corollary}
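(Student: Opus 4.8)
The plan is to factor the correspondence through braided $(\widetilde{G},z)$-crossed extensions of $(\cC,f)$, using the de-equivariantization and equivariantization functors together with Theorem \ref{teo: fermionic crossed to 3- homomorphisms}. Recall that, as noted just before Proposition \ref{prop:image of D}, the minimal modular extension $\cC$ of $\SV$ carries a natural spin-braided structure whose fermion $f$ is the image of the nontrivial simple object of $\SV$ under the inclusion $\SV \subset \cC$.

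First I would show that the fiber $D^{-1}(\cC)$ is in bijection with equivalence classes of braided $(\widetilde{G},z)$-crossed extensions of $(\cC,f)$. Given $\cM \in D^{-1}(\cC)$, the de-equivariantization $\cM_G$ is, by Theorem \ref{theorem:teorema de-equivariantizacion y extensiones} and the proof of Proposition \ref{prop:image of D}, a braided $(\widetilde{G},z)$-crossed extension whose trivial component is $(\cM_G)_e = D(\cM) = \cC$. Conversely, given a braided $(\widetilde{G},z)$-crossed extension $\cat{L}$ of $(\cC,f)$, the equivariantization $\cat{L}^G$ is a braided fusion category over $\Rep(\widetilde{G},z)$ by Corollary \ref{col: equivalence fermionic crossed}, and Theorem \ref{theorem:teorema de-equivariantizacion y extensiones} identifies it as a minimal modular extension of $\Rep(\widetilde{G},z)$. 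Since equivariantization and de-equivariantization are mutually inverse (Proposition \ref{proposition: equivalencia equivariantizacion} and Corollary \ref{col: equivalence fermionic crossed}), we have $(\cat{L}^G)_G \cong \cat{L}$, so $D(\cat{L}^G) = ((\cat{L}^G)_G)_e = \cat{L}_e = \cC$ and hence $\cat{L}^G \in D^{-1}(\cC)$. These two assignments are mutually inverse. For this to yield genuinely \emph{minimal} extensions I would verify the dimension count: writing $|\widetilde{G}| = 2|G|$, a minimal modular extension $\cM$ of $\Rep(\widetilde{G},z)$ satisfies $\FPdim(\cM) = |\widetilde{G}|^2 = 4|G|^2$, while for $\cat{L}$ one has $\FPdim(\cat{L}) = |G|\,\FPdim(\cC) = 4|G|$ and therefore $\FPdim(\cat{L}^G) = |G|\,\FPdim(\cat{L}) = 4|G|^2$, matching the minimality condition of Definition \ref{definition: minimal modular extensions }.

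Finally I would apply Theorem \ref{teo: fermionic crossed to 3- homomorphisms} with trivial component $(\cb,f) = (\cC,f)$: equivalence classes of braided $(\widetilde{G},z)$-crossed extensions of $(\cC,f)$ are in bijection with $2$-homomorphisms $\widetilde{\widetilde{\rho}}:\underline{\underline{G}} \to \picardtres{\cC,f}$ whose truncation $\widetilde{\rho}$ is a fermionic action of $(\widetilde{G},z)$. Composing this with the bijection of the previous step produces the desired correspondence between $D^{-1}(\cC)$ and such $2$-homomorphisms.

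The main obstacle I anticipate is the bookkeeping in the first step: I must check that de-equivariantization and equivariantization respect the two \emph{equivalence relations} involved, namely braided equivalence of minimal modular extensions (commuting up to isomorphism with the embeddings $i:\Rep(\widetilde{G},z)\to\cM$) on one side, and equivalence of braided $(\widetilde{G},z)$-crossed extensions (equivalence as braided $G$-crossed categories compatible with the fermion) on the other. In particular I need the fermion $f\in\cC$ to be matched consistently across the two functors, so that the trivial component of $\cM_G$ is the \emph{spin}-braided category $(\cC,f)$ and not merely $\cC$ as a braided category; this is precisely what forces the truncation $\widetilde{\rho}$ to land in $\Autb{\cC,f}$ as a fermionic action, and it is what ties the statement to the hypothesis of Theorem \ref{teo: fermionic crossed to 3- homomorphisms}.
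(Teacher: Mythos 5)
Your proposal is correct and follows essentially the same route as the paper: both factor the fiber $D^{-1}(\cC)$ through braided $(\widetilde{G},z)$-crossed extensions of $(\cC,f)$ (via Proposition \ref{prop:image of D} and the equivariantization/de-equivariantization correspondence) and then invoke Theorem \ref{teo: fermionic crossed to 3- homomorphisms}. Your write-up is more careful than the paper's two-line argument — in particular you supply the converse direction and the Frobenius--Perron dimension check — but the underlying strategy is identical.
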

\begin{proof}
    Using Proposition \ref{prop:image of D}, for each $\cC \in \operatorname{Im}(D)$, there is an $\cM$ in $\mext(\Rep(\widetilde{G},z))$ such that $\cM_G$ is a braided $(\widetilde{G},z)$-crossed extension of $\cC$. By Theorem \ref{teo: fermionic crossed to 3- homomorphisms},  braided $(\widetilde{G},z)$-crossed extensions of $\cC$ correspond to 2-group homomorphisms $\widetilde{\widetilde{\rho}}:\underline{\underline{G}} \to \underline{\underline{\operatorname{Pic}(\cC,f)}}$  such that $\widetilde{\rho}$ is a fermionic action of the super-group $(\widetilde{G},z)$.
	\end{proof}
	
We know that braided $G$-crossed extensions of a non-degenerate fusion category $\cC$ are in correspondence with 2-homomorphisms $\widetilde{ \widetilde{\rho}}: \underline{\underline{G}} \to \underline{\underline{\operatorname{Pic}(\cC)}}$. If $\widetilde{\rho}: \underline{G} \to \underline{\Autb{\cC}}$ is the truncation of $\widetilde{\widetilde{\rho}}$ then liftings of $\widetilde{\rho}$ are a torsor on $H^3(G,\mathbb{C}^\times)$.  In a similar way, if $\rho$ is the truncation of $\widetilde{\rho}$, liftings of $\rho$ are a torsor on $H^2_\rho(G, \widehat{K_0(\cC)})$. Therefore, any 2-homomorphism associated by truncation to $\rho$ can be parametrized by an element in $H^2_\rho(G, \widehat{K_0(\cC)}) \times H^3(G,\mathbb{C}^\times)$. Conversely, if we start with a group homorphism $\rho$ and consider the obstruction theory in order to obtain a 2-homomorphism, we can conclude that any lifting to a 2-homomorphism can be parametrized by pairs $(\mu,\varphi) \in H^2_\rho(G, \widehat{K_0(\cC)}) \times H^3(G,\mathbb{C}^\times)$ such that  the obstructions $O_3(\rho)$ and $O_4(\rho,\mu)$ vanish. Thus, every 2-homomorphism $\widetilde{ \widetilde{\rho}}: \underline{\underline{G}} \to \underline{\underline{\operatorname{Pic}(\cC)}}$ can be parametrized by triples $(\rho,\mu,\varphi)$,  where $\rho:G \to \Autb{\cC,f}$ is a group homomorphism, $\mu$ belongs to a certain torsor over $H^2_\rho(G,\widehat{K_0(\cC)})$, and $\varphi$ belongs to a certain torsor over $H^3(G,\mathbb{C}^\times)$ such that $O_3(\rho)$ and  $O_4(\rho,\mu)$ vanish. 

In Theorem \ref{proposition: preimagen of D} below, we characterize the image of the group homomorphism $D$ in terms of group homomorphisms and group cohomology.

\begin{theorem}\label{proposition: preimagen of D}
	Consider a minimal modular extension $\cC$ of $\SV$. The pre-image of $\cC$ under $D$ is parametrized by triples $(\rho, \mu, \varphi)$, where $\rho:G \to \Autb{\cC,f}$ is a group homomorphism, $\mu$ belongs to a certain torsor over $\operatorname{Ker}(r_*:H^2_\rho(G,\widehat{K_0(\cC)} \to H^2_\rho(G,\widehat{K_0(\SV)} )$, and $\varphi$ belongs to a certain torsor over $H^3(G,\mathbb{C}^\times)$. The data $\mu$ and $\varphi$ must satisfy the conditions that obstruction $O_3(\rho, \alpha)$ and  $O_4(\rho,\mu)$ vanish.
\end{theorem}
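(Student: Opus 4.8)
The plan is to combine the correspondence of Corollary \ref{cor: preimage C} with the fermionic obstruction theory of Theorem \ref{theorem:bstruction fermionicaction} and the $H^4$-obstruction of Proposition \ref{proposition: O4-obstruction}, peeling the relevant 2-homomorphisms into three layers of cohomological data. First I would invoke Corollary \ref{cor: preimage C} to identify the fibre $D^{-1}(\cC)$ with the set of $2$-group homomorphisms $\widetilde{\widetilde{\rho}}:\underline{\underline{G}}\to\underline{\underline{\operatorname{Pic}(\cC,f)}}$ whose truncation $\widetilde{\rho}$ is a fermionic action of $(\widetilde{G},z)$ on the spin-braided category $(\cC,f)$, where $f$ is the canonical fermion furnished by the inclusion $\SV\hookrightarrow\cC$. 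This already reduces the statement to a counting problem for such fermionic $2$-homomorphisms.

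Next I would decompose $\widetilde{\widetilde{\rho}}$ following the discussion preceding the theorem. Passing to isomorphism classes inside $\underline{\operatorname{Pic}(\cC,f)}\simeq\underline{\Autb{\cC,f}}$ (the equivalence of Proposition \ref{prop:equivalence fermionic functors}) produces the bottom layer, a group homomorphism $\rho:G\to\Autb{\cC,f}$ landing in the \emph{fermionic} autoequivalence group. The middle layer is a monoidal lift $\widetilde{\rho}:\underline{G}\to\underline{\Autb{\cC,f}}$ of $\rho$, and the top layer is the associator datum promoting $\widetilde{\rho}$ to a full $2$-homomorphism. The crucial point is that, because of the fermionic constraint, the bosonic obstruction $O_3(\rho)$ of Proposition \ref{obstruction 3} is replaced by the fermionic obstruction $O_3(\rho,\alpha)$: by Theorem \ref{theorem:bstruction fermionicaction}, $\rho$ admits an $\alpha$-lifting to a fermionic action exactly when $O_3(\rho,\alpha)=0$, and in that case the equivalence classes of such liftings form a torsor over $\operatorname{Ker}\bigl(r_*:H^2_\rho(G,\widehat{K_0(\cC)})\to H^2_\rho(G,\widehat{K_0(\SV)})\bigr)$, using $\widehat{K_0(\SV)}\cong\widehat{K_0(\langle f\rangle)}\cong\zdos$. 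This kernel is precisely the torsor over which $\mu$ ranges; it is the fermionic refinement that cuts the bosonic torsor $H^2_\rho(G,\widehat{K_0(\cC)})$ down to the admissible deformations.

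Once $\widetilde{\rho}$ (equivalently $\mu$) is fixed, Proposition \ref{proposition: O4-obstruction} governs the top layer: the categorical-group homomorphism $\mu\triangleright\widetilde{\rho}$ lifts to a $2$-homomorphism if and only if the $H^4$-obstruction $O_4(\rho,\mu)$ of \eqref{equation: h4-obstruction} vanishes, and when it does the lifts form a torsor over $H^3(G,\mathbb{C}^\times)$, which is the home of $\varphi$. Assembling the three layers yields the desired bijection between $D^{-1}(\cC)$ and triples $(\rho,\mu,\varphi)$ subject to $O_3(\rho,\alpha)=0$ and $O_4(\rho,\mu)=0$.

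I expect the main obstacle to be verifying that the torsor cutting is exactly by $\operatorname{Ker}(r_*)$ as stated, that is, that the fermionic condition $\theta_{\widetilde{\rho}}\cong\alpha$ on the restricted action to $\langle f\rangle$ corresponds precisely to the vanishing of $r_*(\mu)$, with no further independent constraint hidden in the compatibility between the fermionic-action layer and the Picard-$2$-group associator layer. Concretely, this amounts to checking that the fermionic refinement commutes with the two-step obstruction theory (first $O_3$, then $O_4$) without introducing cross-terms, so that $\mu$ and $\varphi$ may be chosen independently within their respective torsors.
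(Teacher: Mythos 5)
Your proposal is correct and follows essentially the same route as the paper's proof: both reduce via Corollary \ref{cor: preimage C} to counting fermionic $2$-homomorphisms into $\underline{\underline{\operatorname{Pic}(\cC,f)}}$, then unpack these into the triple $(\rho,\mu,\varphi)$ using the fermionic obstruction theory of Theorem \ref{theorem:bstruction fermionicaction} for the $O_3(\rho,\alpha)$ and $\operatorname{Ker}(r_*)$ layer and Proposition \ref{proposition: O4-obstruction} for the $O_4$ and $H^3(G,\mathbb{C}^\times)$ layer. Your explicit three-layer decomposition and the closing caveat about the absence of cross-terms between layers are in fact more carefully articulated than the paper's own (rather terse) argument, which simply asserts the equivalence of the fermionic condition with $\mu\in\operatorname{Ker}(r_*)$.
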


\begin{proof}

Let $\cC$ be a minimal modular extension of $\SV$. By Corollary \ref{cor: preimage C}, the pre-image of $\cC$ is in correspondence with  2-homomorphisms $\widetilde{\widetilde{\rho}}: \underline{\underline{G}} \to \underline{\underline{\operatorname{Pic}(\cC,f)}}$ such that $\widetilde{\rho}$ is a fermionic action. These 2-homomorphisms are in correspondence with data $(\rho, \mu, \varphi)$ where $\rho: G \to \Autb{\cC,f}$, $\mu$ belongs to certain torsor over $\operatorname{Ker}(r_*: H^2(G,\widehat{K_0(\cC)} \to H^2(G,\zdos)))$ meaning the fermionic condition, and $\varphi$ belong to certain torsor over $H^3(G, \mathbb{C}^\times)$. Moreover, the data satisfies that the obstructions $O_3(\rho,\alpha)$ and $O_4(\rho,\mu)$ vanish.

Conversely, We know that data $(\rho, \mu, \varphi)$ with obstructions $O_3(\rho)$ and $O_4(\rho,\mu)$  vanish parametrize  2-homomorphisms $\widetilde{\widetilde{\rho}}: \underline{\underline{G}} \to \underline{\underline{\operatorname{Pic}(\cC)}}$. The conditions $O_3(\rho,\alpha)=0$ and $\mu \in \operatorname{Ker}(r_*: H^2(G,\widehat{K_0(\cC)})\to H^2(G,\zdos)))$ is equivalente to say that the 2-homomrphisms $\widetilde{\widetilde{\rho}}$ have values in $\underline{\underline{\operatorname{Pic}(\cC,f)}}$ with truncation to a ferminic action. 
	\end{proof}

	\begin{remark}
The Proposition \ref{proposition: preimagen of D} tells us that the image of  $D$ correspond to spin-braided categories $(\cC,f)$ in $\mext(\SV)$ with at least one fermionic action that can be extended to 2-homomorphisms $\widetilde{\widetilde{\rho}}: \underline{\underline{G}}\to \underline{\underline{\operatorname{Pic}(\cC,f)}}$. 
	\end{remark}

\begin{corollary}\label{coro:ker(D)}
	The kernel of  $D$ is parametrized by triples $(\rho, \mu, \varphi)$ where $\rho:G \to \Autb{\operatorname{Vec}_{\zdos \times \zdos}^{(\omega_0,c_0)},f}$ is a group homomorphism, $\mu$ belongs to a certain torsor over $\operatorname{Ker}(r_*:H^2_\rho(G,\widehat{K_0(\cC)}) \to H^2_\rho(G,\widehat{K_0(\SV)} )$, and $\varphi$ belongs to a certain torsor over $H^3(G,\mathbb{C}^\times)$ such that $O_4(\rho,\mu)$ vanishes.
\end{corollary}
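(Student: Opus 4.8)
The plan is to deduce Corollary \ref{coro:ker(D)} as the specialization of Theorem \ref{proposition: preimagen of D} to the fiber of $D$ over the neutral element of the group $\mext(\SV)$. The first step is to identify this neutral element. By the theory recalled in Section \ref{section: minimal modular extensions}, the unit of $\mext(\cE)$ is the Drinfeld center $\mathcal{Z}(\cE)$; for $\cE=\SV$ this is $\mathcal{Z}(\SV)$. Since $\SV$ coincides with $\operatorname{Vec}_{\zdos}$ as a fusion category and the center depends only on the underlying monoidal structure, one has $\mathcal{Z}(\SV)\cong\mathcal{Z}(\operatorname{Vec}_{\zdos})$, the Toric Code MTC, which is exactly the category $\operatorname{Vec}_{\zdos\times\zdos}^{(\omega_0,c_0)}$ of Example \ref{example: categorias puntedadas dimension 4} in the case $k=0$. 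This pins down why this specific category, rather than an arbitrary $\cC$, is the one appearing in the statement.

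Since $D$ is a group homomorphism, $\operatorname{Ker}(D)$ is precisely the fiber $D^{-1}(\mathcal{Z}(\SV))$. Applying Theorem \ref{proposition: preimagen of D} with $\cC=\operatorname{Vec}_{\zdos\times\zdos}^{(\omega_0,c_0)}$ then parametrizes $\operatorname{Ker}(D)$ by triples $(\rho,\mu,\varphi)$, where $\rho\colon G\to\Autb{\operatorname{Vec}_{\zdos\times\zdos}^{(\omega_0,c_0)},f}$ is a group homomorphism, $\mu$ lies in a torsor over $\operatorname{Ker}(r_*\colon H^2_\rho(G,\widehat{K_0(\cC)})\to H^2_\rho(G,\widehat{K_0(\SV)}))$, and $\varphi$ lies in a torsor over $H^3(G,\mathbb{C}^\times)$, subject to the vanishing of both $O_3(\rho,\alpha)$ and $O_4(\rho,\mu)$. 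The entire content of the corollary is thus that, for this particular dimension-four $\cC$, the obstruction $O_3(\rho,\alpha)$ becomes redundant, so that only the condition $O_4(\rho,\mu)=0$ survives.

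The heart of the argument, and the step I expect to be the main obstacle, is therefore the redundancy of $O_3(\rho,\alpha)$ when $\cC$ is the unit. Here I would first record that $\mathcal{Z}(\Rep(\widetilde{G},z))$ always lies in $\operatorname{Ker}(D)$, which guarantees the fiber is nonempty and supplies the canonical homomorphism for which $O_3$ vanishes by construction. To treat a general $\rho$ I would invoke Theorem \ref{extensiontriviasupergroup}: as $\operatorname{Vec}_{\zdos\times\zdos}^{(\omega_0,c_0)}$ is a spin-braided fusion category of dimension four, every $\rho\colon G\to\Autb{\cC,f}$ extends to a bosonic action and further to a $2$-homomorphism $\underline{\underline{G}}\to\picardtres{\cC}$, so all obstructions vanish at the bosonic level and only the fermionic refinement of $O_3$ remains in question. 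This refinement is controlled by Theorem \ref{therorem: extensions of A with order 4}(c), which identifies $O_3(\rho,\alpha)=0$ with $d_2(\alpha)=0$ for the connecting map $d_2\colon H^2(G,\zdos)\to H^3(G,\zdos)$ of the $G$-module sequence $0\to\zdos\to A\to\zdos\to 0$ determined by $\rho$. The decisive and genuinely computational point is to show that, for the $k=0$ cocycle, this requirement is already encoded in the rest of the data: one must track how $\theta_{\widetilde{\rho}}$, the super-group class $\alpha$, and $d_2$ interact, verifying either that the relevant module sequence splits $G$-equivariantly or that the constraint it imposes is absorbed by $\mu\in\operatorname{Ker}(r_*)$ together with the prescribed $\alpha$. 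Once this compatibility for the Toric Code is established, the reverse inclusion is routine, obtained by running the correspondence of Corollary \ref{cor: preimage C} and Theorem \ref{proposition: preimagen of D} backwards exactly as in that proof.
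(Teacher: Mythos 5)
Your opening reduction is exactly the paper's entire proof: since $D$ is a group homomorphism, $\operatorname{Ker}(D)=D^{-1}(\mathcal{Z}(\SV))$, the unit of $\mext(\SV)$ is $\mathcal{Z}(\SV)\cong\operatorname{Vec}_{\zdos\times\zdos}^{(\omega_0,c_0)}$ (the $k=0$ case of Example \ref{example: categorias puntedadas dimension 4}), and one applies Theorem \ref{proposition: preimagen of D} to this particular $\cC$. The paper says nothing beyond this.

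The problem is everything after that. You read the absence of $O_3(\rho,\alpha)$ from the corollary's displayed conditions as a mathematical claim that this obstruction becomes redundant for the Toric Code, declare its proof to be ``the heart of the argument,'' and then do not carry it out --- the ``decisive and genuinely computational point'' is described but never executed. Worse, the claim is false. By Theorem \ref{therorem: extensions of A with order 4}(c), for $\cC=\operatorname{Vec}_{\zdos\times\zdos}^{(\omega_0,c_0)}$ the condition $O_3(\rho,\alpha)=0$ is equivalent to $d_2(\alpha)=0$ for the connecting map of $0\to\zdos\to A\xrightarrow{r}\zdos\to 0$ with $G$ acting through $\rho$. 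Take $G=\zdos$ with $\rho$ the non-trivial homomorphism: the swap $v\leftrightarrow v+f$ makes $A$ the regular module $\zdos[C_2]$ (the swapped pair $\{v,v+f\}$ is a basis), so $H^n(C_2,A)=0$ for $n>0$ by Shapiro's lemma, and the long exact sequence forces $d_2:H^2(C_2,\zdos)\to H^3(C_2,\zdos)$ to be an isomorphism between two groups of order two. Hence $O_3(\rho,\alpha)\neq 0$ whenever $\alpha$ is non-trivial --- which is exactly why, in the paper's own Example \ref{example: z4} for $(\zn{4},[2])$, only the \emph{trivial} $\rho$ contributes to $\operatorname{Ker}(D)$. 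The correct reading of the corollary is that the $O_3$ condition is still in force, absorbed into the requirement that the ``certain torsor'' of fermionic liftings over $\operatorname{Ker}(r_*)$ be non-empty (its disappearance from the final clause is notational, not mathematical), and the proof is the one-line specialization you began with. Your attempt to upgrade that omission into a theorem both leaves its key step unproven and asserts something the paper's own computations contradict.
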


\begin{proof}
    The result follows from Theorem \ref{proposition: preimagen of D}, given that the trivial element of $\mext(\SV)$ is $\mathcal{Z}(\SV) \cong \operatorname{Vec}_{\zdos \times \zdos}^{(\omega_0,c_0)}$ as we explain in Section  \ref{section: minimal modular extensions}. 
	% By group structure explained in Section \ref{section: minimal modular extensions}, the trivial element of $\mext(\SV)$ is $\mathcal{Z}(\SV)$, that correspond to $\operatorname{Vec}_{\zdos \times \zdos}^{(\omega_0,c_0)}$ in Example \ref{example: action pointed braided categories of foru rank}. Then, the result is a consequence of \ref{proposition: preimagen of D}.%kernel of $D$ correspond to minimal modular extensions $\cat{M}$ such that $\cat{M}_G$ is a braided $(\widetilde{G},z)$-crossed extension of $\operatorname{Vec}_{\zdos \times \zdos}^{(\omega_0,c_0)}$
	\end{proof}

Next, we use Proposition \ref{proposition: preimagen of D} and Corollary \ref{coro:ker(D)} to determine the order of $\mext(\Rep(\zn{m} \times \zdos, (0,1)))$ for  $m$ an odd number.

\begin{theorem}[Minimal modular extensions of $(\zn{m}, \alpha \equiv 0 )$, $m$ odd]\label{theorem: zm m impar}
	Consider the trivial super-group $\zn{m} \times \zdos$ where $m$ is an odd number. The group $\mext(\Rep({\zn{m}\times \zdos,([0],[1])}))$ has order $16m$.	
\end{theorem}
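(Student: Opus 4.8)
The plan is to reduce everything to a counting problem via the group homomorphism $D$, using the surjectivity from Corollary \ref{corol:Ostrik} together with the cohomological parametrization of $\Ker{D}$ from Corollary \ref{coro:ker(D)}. First I would observe that $\widetilde{G}=\zn{m}\times\zdos$ with $z=([0],[1])$ is the trivial super-group, so in the notation $(G,\alpha)$ we have $G=\zn{m}$ and $\alpha\equiv 0$. Since $(\widetilde{G},z)$ is trivial, Corollary \ref{corol:Ostrik} gives that $D:\mext(\Rep(\widetilde{G},z))\to\mext(\SV)$ is surjective. As $D$ is a homomorphism of finite abelian groups and $|\mext(\SV)|=16$, surjectivity yields $|\mext(\Rep(\widetilde{G},z))|=16\,|\Ker{D}|$, so the whole problem collapses to showing $|\Ker{D}|=m$.

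To compute $|\Ker{D}|$ I would apply Corollary \ref{coro:ker(D)}, which parametrizes $\Ker{D}$ by triples $(\rho,\mu,\varphi)$ with $\rho:\zn{m}\to\Autb{\cC,f}$ a group homomorphism, $\mu$ in a torsor over $\Ker{r_*:H^2_\rho(\zn{m},\widehat{K_0(\cC)})\to H^2_\rho(\zn{m},\widehat{K_0(\SV)})}$, and $\varphi$ in a torsor over $H^3(\zn{m},\mathbb{C}^\times)$ subject to $O_4(\rho,\mu)=0$; here $\cC=\operatorname{Vec}_{\zdos\times\zdos}^{(\omega_0,c_0)}=\mathcal{Z}(\SV)$. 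Each factor is then determined by elementary cyclic-group cohomology. By Theorem \ref{therorem: extensions of A with order 4}(a) one has $\Autb{\cC,f}\cong\zdos$, and since $m$ is odd $\operatorname{Hom}(\zn{m},\zdos)=0$, so $\rho$ is forced to be trivial, giving exactly one choice. With $\rho$ trivial the $\zn{m}$-action on $\widehat{K_0(\cC)}\cong\zdos\times\zdos$ and on $\widehat{K_0(\SV)}\cong\zdos$ is trivial, and because $\gcd(m,4)=1$ both $H^2(\zn{m},\zdos\times\zdos)$ and $H^2(\zn{m},\zdos)$ vanish; hence $\Ker{r_*}=0$ and $\mu$ is uniquely determined. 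Finally $H^3(\zn{m},\mathbb{C}^\times)\cong\zn{m}$ (odd-degree cohomology of a cyclic group, where $\mathbb{C}^\times[m]\cong\zn{m}$), so the $\varphi$-torsor has $m$ elements once it is shown to be nonempty.

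The last point is where the only genuine content lies: I must verify that the $\varphi$-torsor is nonempty, i.e. that $O_4(\rho,\mu)$ vanishes for the trivial $\rho$ and $\mu$. This is precisely guaranteed by Theorem \ref{extensiontriviasupergroup}: since $\cC$ is a spin-braided fusion category of dimension four and $\rho$ is (in particular) a group homomorphism into $\Autb{\cC,f}$, it extends to a $2$-homomorphism $\underline{\underline{\zn{m}}}\to\picardtres{\cC}$, forcing the relevant obstruction to vanish. Granting this, the count gives $|\Ker{D}|=1\cdot 1\cdot m=m$, and therefore $|\mext(\Rep(\zn{m}\times\zdos,([0],[1])))|=16m$.

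The hard part will be the bookkeeping that the three torsor factors really are the only constraints and that no further obstruction ($O_3(\rho,\alpha)$, the fermionic condition on $\mu$) eliminates choices. I expect this to be routine here because triviality of $\rho$, of $\alpha$, and the coprimality $\gcd(m,4)=1$ make every obstruction group either trivial or annihilated; the substantive input is the vanishing of $O_4$ supplied by Theorem \ref{extensiontriviasupergroup}, while the surjectivity of $D$ and the value $|\mext(\SV)|=16$ do the rest.
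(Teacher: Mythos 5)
Your proposal is correct and follows essentially the same route as the paper: surjectivity of $D$ from Corollary \ref{corol:Ostrik}, then counting $\Ker{D}$ via Corollary \ref{coro:ker(D)} using $\operatorname{Hom}(\zn{m},\zdos)=0$, $H^2(\zn{m},\zdos\times\zdos)=0$, and $H^3(\zn{m},\mathbb{C}^\times)\cong\zn{m}$. You are in fact slightly more careful than the paper, which leaves the non-emptiness of the $\varphi$-torsor (vanishing of $O_4$) implicit, whereas you justify it via Theorem \ref{extensiontriviasupergroup}.
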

\begin{proof}
We have that 
	\begin{enumerate}[leftmargin=*,label=\rm{(\alph*)}]
		\item By Corollary \ref{corol:Ostrik} the group homomorphism $D$ is surjective. 
		\item 	Given that $m$ is an odd number, the unique group homomorphism $\zn{m} \to \zdos \times \zdos$ is the trivial homomorphism. \label{example: zm m impar part 2}
		\item $H^2(\zn{m}, \zdos \times \zdos)\cong 0$, \label{example: zm m impar part 3}
		\item $H^3(\zn{m},\mathbb{C}^\times ) = \zn{m} $. \label{example: zm m impar part 4}
	\end{enumerate}	

	 Using items (\ref{example: zm m impar part 2}), (\ref{example: zm m impar part 3}),  and (\ref{example: zm m impar part 4}),   the Corollary \ref{coro:ker(D)} implies that  kernel of $D$ is correspondence with triples $(\rho, \mu, \varphi)$ where $\rho:\zn{m} \to \Autb{\operatorname{Vec}_{\zdos \times \zdos}^{\omega_0,c_0},f}$ is the trivial homomorphism, $\mu \in H^2_\rho(G,\zdos \times \zdos)=0$, and $\varphi$ belongs to a certain torsor over $H^3(\zn{m},\mathbb{C}^\times)$. As there are $m$ of that  triples, the order of $\mext(\Rep({\zn{m}\times \zdos,([0],[1])})$ is $16m$. 
\end{proof}

\begin{example}[Minimal modular extensions of $\zn{6}$]
 The group $\mext(\Rep(\zn{6},[3]))$ has order $48$. We consider $\zn{6}\cong \zn{3} \times \zn{2}$ and  apply  Theorem \ref{theorem: zm m impar}. Then the group of minimal modular extensions of $\Rep(\zn{6},[3])$ has order 48.
 
 This result agrees with \cite[Table XX]{lan2016classification}.
\end{example}

\begin{example}[Minimal modular extensions of $\zn{4}$]\label{example: z4}
 we can deduce that there are exactly 32 minimal modular extensions of $\Rep({\zn{4},[2]})$. This information agrees with the result presented by Ostrik. In this case, we prove that $\ker(D)$ has order 4 and that the image of $D$ consist of the pointed modular extensions of $\SV$.
 
 \begin{enumerate}[leftmargin=*,label=\rm{(\alph*)}]
 	\item The $H^4$-obstruction that we need to consider is $H^4(\zdos, \mathbb{C}^\times)=0$, so each action $\underline{\zdos} \to {\underline{\Autb{\cC,f}}}$ has a lifiting to a 2-homomorphism. 
 	
 	\item $\zn{4}$ is not a trivial super-group, so no Ising category can  be in the image of $D$.
 	
 	\item The kernel of $D$ is parametrized by triples $(\rho,\mu,\varphi)$ where $\rho: \zdos \to \Autb{\operatorname{Vec}_{\zdos \times \zdos}^{(\omega_0, c_0)},f}$ is the trivial homomorphism, $\mu \in H^2(\zdos, \zdos \times \zdos)$ such that $r_*(\mu)$ is non-trivial, and $\varphi \in H^3(\zdos, \mathbb{C}^\times) \cong \zdos$. Then, there are $4$ such triples, which implies that $\operatorname{Ker}(D)$ has order $4$. A similar analysis shows that every pointed fusion category with fusion rules given by $\zdos \times \zdos$ is in the image of $D$.
 	
 	\item Moreover, every pointed fusion category with fusion rules given by $\zn{4}$ is in the image of $D$. In fact, the triple $(\rho, \mu, \varphi)$ satisfies the conditions in  Proposition \ref{proposition: preimagen of D}; where $\rho: \zdos \to \Autb{\operatorname{Vec}^{(\omega_k,c_k)}_{\mathbb{Z}/4\mathbb{Z}} ,f}$ is the trivial homomorphism, $\mu$ is the unique non-trivial object in $H^2(\zdos,\zn{4})$, and $\varphi \in H^3(\zdos,\mathbb{C}^\times)$.
 \end{enumerate}
\end{example}
  
  \begin{proposition}\label{proposisition: image has 4 elements}
      Let $D:\mext(\Rep(\widetilde{G},z)) \to \mext(\SV)$ be  the group homomorphism defined above. We have that $D$ is non-trivial and the image of $D$ has at least 4 elements. Specifically, the pointed fusion categories  with fusion rules given by $\zdos \times \zdos$ is always  in the image of $D$.
  \end{proposition}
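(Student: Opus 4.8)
The plan is to exhibit, for an arbitrary super-group $(\widetilde{G},z)$, four pairwise non-equivalent minimal modular extensions of $\SV$ lying in $\operatorname{Im}(D)$, namely the pointed categories $\operatorname{Vec}_{\zdos\times\zdos}^{(\omega_k,c_k)}$ with $4k=0$ from Example \ref{example: categorias puntedadas dimension 4}(a). Since the complete invariant $k\in\{0,\tfrac14,\tfrac12,\tfrac34\}$ separates these four categories, producing all of them in the image yields $|\operatorname{Im}(D)|\ge 4$, and in particular $D$ is non-trivial. By Proposition \ref{prop:image of D} together with Theorem \ref{proposition: preimagen of D}, it suffices to produce, for each such $\cC$, a triple $(\rho,\mu,\varphi)$ with $\rho:G\to\Autb{\cC,f}$ a group homomorphism, $\mu$ in the torsor over $\operatorname{Ker}(r_*)$ realizing a fermionic action of $(\widetilde{G},z)$, and $\varphi$ in the torsor over $H^3(G,\mathbb{C}^\times)$, subject to the vanishing of $O_3(\rho,\alpha)$ and $O_4(\rho,\mu)$.

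First I would take $\rho:G\to\Autb{\cC,f}\cong\zdos$ to be the trivial homomorphism. Then $G$ acts trivially on the module $\widehat{K_0(\cC)}$, which for the pointed category $\cC$ is $A=\zdos\times\zdos$, so the restriction sequence of Theorem \ref{therorem: extensions of A with order 4},
\[
0 \longrightarrow \zdos \longrightarrow \zdos \times \zdos \overset{r}{\longrightarrow} \zdos \longrightarrow 0,
\]
splits as a sequence of (trivial) $G$-modules. Consequently the connecting homomorphism $d_2$ vanishes identically, and since $O_3(\rho,\alpha)$ is independent of the chosen lifting by Theorem \ref{theorem:bstruction fermionicaction}, Definition \ref{O3alt} gives $O_3(\rho,\alpha)=d_2(\alpha)=0$ for every $\alpha\in H^2(G,\zdos)$. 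This is exactly the point at which the fusion rules enter: for $A=\zn{4}$ the analogous sequence $0\to\zdos\to\zn{4}\to\zdos\to 0$ is non-split and $d_2$ need not vanish, which is why only the $\zdos\times\zdos$ categories are guaranteed to appear. Exactness of the long sequence then gives $\alpha\in\operatorname{Im}(r_*)$, so there is a cocycle $\mu$ with $r_*(\mu)=\alpha$; the set of such $\mu$ is precisely the torsor over $\operatorname{Ker}(r_*)$ parametrizing the fermionic actions of $(\widetilde{G},z)$ that refine $\rho$.

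Next I would dispose of the $H^4$-obstruction. Because $\cC$ is a spin-braided fusion category of Frobenius--Perron dimension four, Theorem \ref{extensiontriviasupergroup} applies to $\rho$ and guarantees that it extends to a $2$-homomorphism $\underline{\underline{G}}\to\picardtres{\cC}$; equivalently, the obstruction $O_4(\rho,\mu)$ is trivial. Since $\rho$ takes values in $\Autb{\cC,f}$, Proposition \ref{prop:equivalence fermionic functors} identifies $\underline{\operatorname{Pic}(\cC,f)}$ with $\underline{\Autb{\cC,f}}$, so this $2$-homomorphism in fact lands in $\picardtres{\cC,f}$ and its truncation is the fermionic action selected by $\mu$. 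Choosing any $\varphi$ in the $H^3(G,\mathbb{C}^\times)$-torsor, the triple $(\rho,\mu,\varphi)$ meets every requirement of Theorem \ref{proposition: preimagen of D}, so $\cC\in\operatorname{Im}(D)$.

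Running this argument for each $k\in\{0,\tfrac14,\tfrac12,\tfrac34\}$ places all four pointed categories with fusion rules $\zdos\times\zdos$ in the image, which proves the statement. The main obstacle is the vanishing of $O_4(\rho,\mu)$ for the fermionic lift $\mu$: unlike in Example \ref{example: z4}, where $G=\zdos$ forces $H^4(G,\mathbb{C}^\times)=0$, here $G$ is arbitrary and the obstruction group may be non-trivial, so the vanishing genuinely relies on the structural input of Theorem \ref{extensiontriviasupergroup} for dimension-four spin-braided categories rather than on a dimension count of the cohomology.
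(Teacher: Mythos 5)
Your overall strategy is a direct construction: take the trivial $\rho$, kill $O_3$ via the splitting of $0\to\langle f\rangle\to\zdos\times\zdos\to\zdos\to 0$, and kill $O_4$ by citing Theorem \ref{extensiontriviasupergroup}. The $O_3$ part is correct and in fact more explicit than what the paper writes, and the contrast you draw with the non-split sequence for $\zn{4}$ is the right explanation of why only the $\zdos\times\zdos$ categories are claimed. But note this is not the paper's route: the paper argues by comparison, observing that the restriction map $r$ --- and hence all the parametrizing data of Theorem \ref{proposition: preimagen of D} --- is literally the same for the four categories with fusion rules $\zdos\times\zdos$, and then transfers non-emptiness from the case $k=0$, which is automatic because $\operatorname{Vec}_{\zdos\times\zdos}^{(\omega_0,c_0)}\cong\mathcal{Z}(\SV)=D(\mathcal{Z}(\Rep(\widetilde{G},z)))$ is the image of the identity element (compare Corollary \ref{coro:ker(D)}).

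The genuine gap is your treatment of $O_4$. Theorem \ref{extensiontriviasupergroup} asserts that a group homomorphism $\xi:G\to\Autb{\cb,f}$ extends to \emph{some} bosonic action and to \emph{some} $2$-homomorphism $\underline{\underline{G}}\to\picardtres{\cb}$; it does not assert that \emph{every} lifting of $\xi$ --- in particular the $\alpha$-lifting $\mu\triangleright\widetilde{\rho}$ that the fermionic condition forces you to use --- has vanishing obstruction. The class $O_4(\widetilde{\rho},\mu)$ in \eqref{equation: h4-obstruction} depends explicitly on $\mu$, through the braiding $c$ and associator $a$ evaluated on the objects $\mu_{g_i,g_j}$, and the requirement $r_*(\mu)=\alpha$ pins $\mu$ down to a single coset of $\operatorname{Ker}(r_*)$; so you cannot deduce $O_4(\rho,\mu)=0$ from the existence of a $2$-homomorphism extending some (possibly non-fermionic) lifting of the trivial $\rho$. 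For $G$ cyclic the issue is invisible since $H^4(G,\mathbb{C}^\times)=0$, but for general $G$ the value of $O_4$ on the fermionic lifting is a nontrivial cohomological expression in $\alpha$ and the quadratic form $q$, and its vanishing for at least one admissible $\mu$ is precisely what needs an argument. You correctly identify this as the main obstacle in your closing paragraph, but the theorem you lean on does not close it; the paper instead closes it (tersely) by the comparison with the $k=0$ case described above.
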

  
  \begin{proof}
  In general, consider $\cB$ one of the pointed braided fusion categories in Example \ref{example: action pointed braided categories of foru rank}. Remember that $\cB$ is non-degenerate and the restriction map $r: \widehat{K_0(\cB)}\cong \operatorname{Inv}(\cB) \to  \widehat{K_0(\SV)}$ can be consider as $r(X)(f)=c_{f,X}\circ c_{X,f}$, for each $X \in \operatorname{Inv}(\cB)$. According to the braided structure of each $\cB$ the group homomorphism $r$ is the same for each $\cB$ with fusion rules $\zdos \times \zdos$. This implies that the group homomorphisms and the group cohomology in Theorem \ref{proposition: preimagen of D} are the same.
\end{proof}

  \begin{proposition}
  	Let $\cb$ be a slightly degenerate pointed braided fusion category, then $\cb$ has a minimal modular extension. 
  \end{proposition}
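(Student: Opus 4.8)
The plan is to reduce to the classification of pointed braided fusion categories by metric data and then to build the extension directly at the level of quadratic forms. Since $\cb$ is pointed, it is braided equivalent to $\Vc_A^{(\omega,c)}$ for $A=\Inv(\cb)$ and an abelian $3$-cocycle $(\omega,c)$, and up to braided equivalence it is determined by the quadratic form $q\colon A\to\mathbb{Q}/\mathbb{Z}$, $q(x)=c(x,x)$, identifying roots of unity with $\mathbb{Q}/\mathbb{Z}$ as in Example~\ref{example: categorias puntedadas dimension 4}. Writing $b_q(x,y)=q(x+y)-q(x)-q(y)$ for the associated bicharacter, the invertible objects of $\mathcal{Z}_2(\cb)$ are exactly $\operatorname{rad}(b_q)$, so $\cb$ slightly degenerate means $\operatorname{rad}(b_q)=\langle f\rangle\cong\zdos$ with $q(f)=\tfrac12$. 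As $\Vc_{A'}^{(\omega',c')}$ is modular precisely when $b_{q'}$ is non-degenerate, and $\FPdim(\Vc_{A'}^{(\omega',c')})=|A'|$ while $\FPdim(\mathcal{Z}_2(\cb))=\FPdim(\SV)=2$, the task reduces to producing a finite abelian group $A'\supseteq A$ with $|A'|=2|A|$ and a non-degenerate quadratic form $q'\colon A'\to\mathbb{Q}/\mathbb{Z}$ restricting to $q$; the inclusion of metric groups will then induce a braided full embedding $\cb\hookrightarrow\cM:=\Vc_{A'}^{(\omega',c')}$ with $\FPdim(\cM)=\FPdim(\cb)\FPdim(\mathcal{Z}_2(\cb))$, i.e.\ a minimal modular extension.

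The key point I would establish first is that $f\notin 2A$. Indeed, if $f=2a$ then, using $q(f)=q(2a)=4q(a)$,
\[
b_q(a,f)=2\,b_q(a,a)=2\bigl(q(f)-2q(a)\bigr)=2q(f)-q(f)=q(f)=\tfrac12\neq0,
\]
contradicting $f\in\operatorname{rad}(b_q)$. Hence the class of $f$ in $A/2A$ is non-zero, and since $\operatorname{Hom}(A,\zdos)=\operatorname{Hom}(A/2A,\zdos)$ there is a character $\chi\colon A\to\zdos$ with $\chi(f)=\tfrac12$.

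Next I would set $A'=A\times\zdos$ with a new order-two generator $s$ and define $q'(x+\epsilon s)=q(x)+\epsilon\,\chi(x)$ for $\epsilon\in\{0,1\}$; its bicharacter is $b_{q'}\bigl((x,\epsilon),(y,\delta)\bigr)=b_q(x,y)+\epsilon\,\chi(y)+\delta\,\chi(x)$. To check non-degeneracy, take $(x,\epsilon)\in\operatorname{rad}(b_{q'})$: pairing against $(y,0)$ for all $y$ gives $b_q(x,\cdot)=-\epsilon\chi$, and pairing against $(0,s)$ gives $\chi(x)=0$. If $\epsilon=0$ then $x\in\operatorname{rad}(b_q)=\langle f\rangle$, and $\chi(x)=0$ forces $x=0$ because $\chi(f)\neq0$. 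If $\epsilon=1$ then $b_q(x,\cdot)=-\chi=\chi$, which is impossible since any character of the form $b_q(x,\cdot)$ vanishes on $\operatorname{rad}(b_q)=\langle f\rangle$ while $\chi(f)\neq0$. Thus $b_{q'}$ is non-degenerate, $(A',q')$ is a metric group, and $\cM=\Vc_{A'}^{(\omega',c')}$ is the desired minimal modular extension.

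I expect the main obstacle to be purely technical: verifying that the formula for $q'$ really defines a quadratic form on $A\times\zdos$ (homogeneity $q'(nu)=n^2q'(u)$ and bilinearity of $b_{q'}$), which hinges on $\chi$, and hence $q'-q$, being valued in the $2$-torsion. The conceptual heart, however, is the identity $f\notin 2A$, which simultaneously produces the detecting character $\chi$ and collapses the radical of $b_{q'}$ to zero.
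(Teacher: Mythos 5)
Your proof is correct, but it takes a genuinely different route from the paper's. The paper disposes of the statement in two lines by invoking the splitting result of \cite[Proposition 2.6]{etingof2011weakly}: a slightly degenerate pointed braided fusion category factors as $\cb\cong\SV\boxtimes\cb_0$ with $\cb_0$ non-degenerate pointed, and then $\cM\boxtimes\cb_0$ is a minimal modular extension for $\cM$ any of the sixteen minimal modular extensions of $\SV$. You instead work entirely at the level of metric groups: you prove the key identity $f\notin 2A$, use it to produce a character $\chi$ with $\chi(f)=\tfrac12$, and adjoin a single order-two element carrying $\chi$ to kill the radical of $b_q$. The two arguments are close in spirit --- your lemma $f\notin 2A$ is precisely the group-theoretic input that makes the orthogonal splitting $(A,q)\cong(\langle f\rangle,q|_{\langle f\rangle})\oplus(A_0,q_0)$ behind the cited proposition possible --- but yours is self-contained (no appeal to the splitting theorem), stays within elementary quadratic-form arithmetic, and always produces a \emph{pointed} minimal modular extension, whereas the paper's recipe also yields the non-pointed ones (Ising $\boxtimes\,\cb_0$). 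The only points you should make explicit in a final write-up are routine: that $q'$ is well defined and quadratic because $\chi$ is $2$-torsion valued (which you already flag), and that an inclusion of metric groups $(A,q)\hookrightarrow(A',q')$ induces a braided full embedding $\Vc_A^{(\omega,c)}\hookrightarrow\Vc_{A'}^{(\omega',c')}$ via the Eilenberg--MacLane identification of $H^3_{ab}$ with quadratic forms.
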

  
  \begin{proof}
  	By \cite[Proposition 2.6]{etingof2011weakly} $\cb \cong \SV \boxtimes \cb_0$, where $\cb_0$ is a non-degenerate pointed fusion category. Take $\cM$ a minimal modular extension of $\SV$ and consider de modular category $\cM \boxtimes \cb_0$.
  	\end{proof}

  	  	%\begin{proposition}
  	  	%	Let $\cb$ be a slightly degenerate fusion category with minimal extension $\widetilde{\cb}$. $\cb$ is nilpotent %(respectively, weakly group theoretical, solvable) if and only if $\widetilde{\cb}$ is nilpotent (respectively, weakly group theoretical, solvable).
 	  	%\end{proposition}
   	}

%%%%%%%%%%%%%%%%%%%%%%%%%%%%%%%%%%%%%%%%%%%%%%%%bibliografia%%%%%%%%%%%%%%%%%%%%%%%%%%%%%%%%%%%%%%%%%%%%%%%%%%%%%%%%%%%%%%%%%%%%%%%%%%%
%%%%%%%%%%%%%%%%%%%%%%%%%%%%%%%%%%%%%%%%%%%%%%%%%%%%%%%%%%%%%%%%%%%%%%%%%%%%%%%%%%%%%%%%%%%%%%%%%%%%%%%%%%%%%%%%%%%%%%%%%%%%%%%%%%%%%%%

\bibliographystyle{alpha}
\bibliography{biblio}

\newcommand{\etalchar}[1]{$^{#1}$}
\begin{thebibliography}{CGPW16}

\bibitem[Bru00a]{Brugui?res2000}
Alain Brugui{\`e}res.
\newblock Cat{\'e}gories pr{\'e}modulaires, modularisations et invariants des
  vari{\'e}t{\'e}s de dimension 3.
\newblock {\em Mathematische Annalen}, 316(2):215--236, Feb 2000.

\bibitem[Bru00b]{modularizacion}
Alain Brugui\`eres.
\newblock Cat\'egories pr\'emodulaires, modularisations et invariants des
  vari\'et\'es de dimension 3.
\newblock {\em Math. Ann.}, 316(2):215--236, 2000.

\bibitem[CGPW16]{SCJZ}
Shawn~X. Cui, C{\'e}sar Galindo, Julia~Yael Plavnik, and Zhenghan Wang.
\newblock On {G}auging {S}ymmetry of {M}odular {C}ategories.
\newblock {\em Comm. Math. Phys.}, 348(3):1043--1064, 2016.

\bibitem[Del02]{deligne2002categories}
Pierre Deligne.
\newblock Cat{\'e}gories tensorielles.
\newblock {\em Mosc. Math. J}, 2(2):227--248, 2002.

\bibitem[DGNO10]{DGNO}
Vladimir Drinfeld, Shlomo Gelaki, Dmitri Nikshych, and Victor Ostrik.
\newblock On braided fusion categories. {I}.
\newblock {\em Selecta Math. (N.S.)}, 16(1):1--119, 2010.

\bibitem[DN{\etalchar{+}}13]{davydov2013picard}
Alexei Davydov, Dmitri Nikshych, et~al.
\newblock The picard crossed module of a braided tensor category.
\newblock {\em Algebra Number Theory}, 7(6):1365--1403, 2013.

\bibitem[EGNO15]{Book-ENO}
Pavel Etingof, Shlomo Gelaki, Dmitri Nikshych, and Victor Ostrik.
\newblock {\em Tensor categories}, volume 205 of {\em Mathematical Surveys and
  Monographs}.
\newblock American Mathematical Society, Providence, RI, 2015.

\bibitem[EML53]{EM1}
Samuel Eilenberg and Saunders Mac~Lane.
\newblock On the groups of {$H(\Pi,n)$}. {I}.
\newblock {\em Ann. of Math. (2)}, 58:55--106, 1953.

\bibitem[EML54]{EM2}
Samuel Eilenberg and Saunders Mac~Lane.
\newblock On the groups {$H(\Pi,n)$}. {II}. {M}ethods of computation.
\newblock {\em Ann. of Math. (2)}, 60:49--139, 1954.

\bibitem[ENO05]{ENO}
Pavel Etingof, Dmitri Nikshych, and Viktor Ostrik.
\newblock On fusion categories.
\newblock {\em Ann. of Math. (2)}, 162(2):581--642, 2005.

\bibitem[ENO10]{ENO3}
Pavel Etingof, Dmitri Nikshych, and Victor Ostrik.
\newblock Fusion categories and homotopy theory.
\newblock {\em Quantum Topol.}, 1(3):209--273, 2010.
\newblock With an appendix by Ehud Meir.

\bibitem[ENO11a]{ENO2}
Pavel Etingof, Dmitri Nikshych, and Victor Ostrik.
\newblock Weakly group-theoretical and solvable fusion categories.
\newblock {\em Adv. Math.}, 226(1):176--205, 2011.

\bibitem[ENO11b]{etingof2011weakly}
Pavel Etingof, Dmitri Nikshych, and Victor Ostrik.
\newblock Weakly group-theoretical and solvable fusion categories.
\newblock {\em Advances in Mathematics}, 226(1):176--205, 2011.

\bibitem[Gal11]{Ga1}
C{\'e}sar Galindo.
\newblock Clifford theory for tensor categories.
\newblock {\em J. Lond. Math. Soc. (2)}, 83(1):57--78, 2011.

\bibitem[GVR17]{galindo2017categorical}
C{\'e}sar Galindo and C{\'e}sar~F Venegas-Ram{\'\i}rez.
\newblock Categorical fermionic actions and minimal modular extensions.
\newblock {\em arXiv preprint arXiv:1712.07097}, 2017.

\bibitem[JS93]{js}
Andre Joyal and Ross Street.
\newblock Braided tensor categories.
\newblock {\em Adv. Math.}, 102(1):20--78, 1993.

\bibitem[Kit06]{Kitaev20062}
Alexei Kitaev.
\newblock Anyons in an exactly solved model and beyond.
\newblock {\em Ann. Physics}, 321(1):2--111, 2006.

\bibitem[LKW16a]{LKW}
T.~{Lan}, L.~{Kong}, and X.-G. {Wen}.
\newblock {Modular Extensions of Unitary Braided Fusion Categories and 2+1D
  Topological/SPT Orders with Symmetries}.
\newblock {\em Communications in Mathematical Physics}, September 2016.

\bibitem[LKW16b]{lan2016classification}
Tian Lan, Liang Kong, and Xiao-Gang Wen.
\newblock Classification of 2+ 1d topological orders and spt orders for bosonic
  and fermionic systems with on-site symmetries.
\newblock {\em arXiv preprint arXiv:1602.05946}, 2016.

\bibitem[M\"00]{Galois-Mueger}
Michael M\"uger.
\newblock Galois theory for braided tensor categories and the modular closure.
\newblock {\em Adv. Math.}, 150(2):151--201, 2000.

\bibitem[M{\"u}g03]{Mu2}
Michael M{\"u}ger.
\newblock On the structure of modular categories.
\newblock {\em Proc. London Math. Soc. (3)}, 87(2):291--308, 2003.

\bibitem[Ost03]{ostrik1}
Victor Ostrik.
\newblock Module categories, weak {H}opf algebras and modular invariants.
\newblock {\em Transform. Groups}, 8(2):177--206, 2003.

\bibitem[RSW09]{RSW}
Eric Rowell, Richard Stong, and Zhenghan Wang.
\newblock On classification of modular tensor categories.
\newblock {\em Comm. Math. Phys.}, 292(2):343--389, 2009.

\bibitem[Tam01]{Tam-act}
Daisuke Tambara.
\newblock Invariants and semi-direct products for finite group actions on
  tensor categories.
\newblock {\em J. Math. Soc. Japan}, 53(2):429--456, 2001.

\bibitem[Tur00]{turaev5291homotopy}
VG~Turaev.
\newblock Homotopy field theory in dimension 3 and crossed group-categories,
  preprint (2000).
\newblock {\em arXiv preprint math/0005291}, 2000.

\bibitem[Tur10]{turaev2010homotopy}
Vladimir~G Turaev.
\newblock {\em Homotopy quantum field theory}, volume~10.
\newblock European Mathematical Society, 2010.

\end{thebibliography}
\end{document}